\documentclass[12pt,a4paper]{article}

\topmargin -15mm
\textwidth 16truecm
\textheight 240truemm
\oddsidemargin 0mm
\evensidemargin 0mm

\makeatletter
\def\thickhrulefill{\leavevmode \leaders \hrule height 1ex \hfill \kern \z@}
\def\maketitle{
  \vspace*{0\p@}%
  {\parindent \z@ \centering \reset@font%
        \thickhrulefill \quad \scshape \@author \ -\ \@date \quad \thickhrulefill%
        \par\nobreak%
        \vspace*{10\p@}%
        \interlinepenalty\@M
        \hrule
        \vspace*{10\p@}%
        \Large \bfseries \@title \par\nobreak
        \par
        \vspace*{10\p@}%
        \hrule
    \vskip 30\p@
}} \makeatother

\makeatletter
\def\blfootnote{\xdef\@thefnmark{}\@footnotetext}
\makeatother

\usepackage{pifont,fancybox,enumerate,multirow,bigstrut,amsmath,amssymb,amsfonts,graphicx,xypic}
\usepackage{amsthm,url,hyperref,xcolor,titlesec}
\usepackage[french]{babel}
\usepackage[all]{xy}

\numberwithin{equation}{section}


\newtheoremstyle{rema}%
{10pt}%
{10pt}%
{}%
{}%
{\itshape}%
{\ --}%
{0.5em}%
{}

\newtheorem{lemma}{Lemme}[section]
\newtheorem{prop}[lemma]{Proposition}
\newtheorem{thm}[lemma]{Th\'eor\`eme}
\newtheorem{corr}[lemma]{Corollaire}
\newtheorem{defin}[lemma]{D\'efinition}
\newtheorem{conj}[lemma]{Conjecture}
\newtheorem{ques}[lemma]{Question}

\theoremstyle{rema}
\newtheorem{rem}[lemma]{Remarque}

\newcommand{\Div}{\ensuremath{\text{Div}}}

\newcommand{\divi}{\ensuremath{\text{div}}}
\newcommand{\Pic}{\ensuremath{\text{Pic}}}

\newcommand{\Gal}{\ensuremath{\text{Gal}}}
\newcommand{\Hom}{\ensuremath{\text{Hom}}}

\newcommand{\Inv}{\ensuremath{\text{Inv}}}
\newcommand{\id}{\ensuremath{\text{id}}}
\newcommand{\im}{\ensuremath{\text{im}}}

\newcommand{\SK}{\ensuremath{\text{SK}}}
\newcommand{\SKb}{\ensuremath{\textbf{SK}}}
\newcommand{\car}{\ensuremath{\text{car}}}
\newcommand{\ind}{\ensuremath{\text{ind}}}
\newcommand{\Frac}{\ensuremath{\text{Frac}}}
\newcommand{\Br}{\ensuremath{\text{Br}}}
\newcommand{\SL}{\ensuremath{\text{SL}}}
\newcommand{\SLb}{\ensuremath{\textbf{SL}}}
\newcommand{\spec}{\ensuremath{\text{Spec}}}

\newcommand{\Lbar}{\ensuremath{\overline{L}}}
\newcommand{\Fbar}{\ensuremath{\overline{F}}}
\newcommand{\Dbar}{\ensuremath{\overline{D}}}

\newcommand{\GB}{\ensuremath{\textbf{G}}}
\newcommand{\GBcal}{\ensuremath{{\pmb{\mathcal{G}}}}}
\newcommand{\Gbar}{\ensuremath{\overline{G}}}
\newcommand{\GBbar}{\ensuremath{\overline{\textbf{G}}}}
\newcommand{\eenB}{\ensuremath{\textbf{1}}}
\newcommand{\dlog}{\ensuremath{\text{dlog}}}
\newcommand{\Nrd}{\ensuremath{\text{Nrd}}}
\newcommand{\CH}{\ensuremath{\text{CH}}}
\newcommand{\kcorps}{\ensuremath{k\text{-}\mathfrak{corps}}}
\newcommand{\rcorps}{\ensuremath{R\text{-}\mathfrak{corps}}}
\newcommand{\Kcorps}{\ensuremath{K\text{-}\mathfrak{corps}}}

\newcommand{\Tr}{\ensuremath{\text{Tr}}}
\newcommand{\verylongrightarrow}{\relbar\joinrel\longrightarrow}

\newcommand{\cd}{\ensuremath{\text{cd}}}

\newcommand{\Acal}{\ensuremath{\mathcal{A }}}
\newcommand{\Bcal}{\ensuremath{\mathcal{B }}}

\newcommand{\Hcal}{\ensuremath{\mathcal{H }}}

\newcommand{\Ocal}{\ensuremath{\mathcal{O }}}
\newcommand{\Pcal}{\ensuremath{\mathcal{P }}}

\newcommand{\Xcal}{\ensuremath{\mathcal{X }}}

\newcommand{\Hb}{\ensuremath{\mathbb{H }}}

\newcommand{\Zb}{\ensuremath{\mathbb{Z }}}

\titleformat{\subsubsection}[runin]
  {\itshape}
  {\thesubsubsection}
  {5pt }
  {}[\ --]

\renewcommand{\thesubsubsection}{(\alph{subsubsection})}

\title{L'invariant de Suslin en caract\'eristique positive}
\author{Tim Wouters}

\begin{document}

\maketitle

\blfootnote{\ \\
\textit{Adresse:} Tim Wouters, K.U.Leuven, Departement Wiskunde, Celestijnenlaan 200B bus 2400, B-3001 Leuven, Belgique --
tim@wouters.in
\\ 
\textit{Classification (AMS) par sujet 2010:} 19B99 (12G05, 16K50, 17C20)\\
\textit{Mots cl\'es:} Groupe de Whitehead r\'eduit -- Modules de cycles -- Invariants cohomologiques}

\begin{abstract} 
\addcontentsline{toc}{section}{R\'esum\'e} 
Pour une $k$-alg\`ebre simple centrale $A$ d'indice inversible dans $k$, Suslin a d\'efini un invariant cohomologique de $\SKb_1(A)$ \cite{suslin}.
Dans ce texte, nous g\'en\'eralisons cet invariant  \`a toute $k$-alg\`ebre simple centrale par
un rel\`evement de la caract\'eristique positive \`a la caract\'eristique
0.  Pour pouvoir d\'efinir cet invariant, on a besoin des groupes de cohomologie des diff\'erentielles 
logarithmiques de Kato \cite{katogalcoh}.
\end{abstract}

\renewcommand{\abstractname}{Abstract}

\begin{abstract}
For  a central simple $k$-algebra $A$ with $\ind_k(A)\in k^\times$,  Suslin defined a 
cohomological invariant for $\SKb_1(A)$ \cite{suslin}.  In this text, we generalise his invariant to
any central simple $k$-algebra using a lift from positive characteristic to characteristic 0.
To be able to define the invariant, we  use Kato's cohomology of logarithmic differentials \cite{katogalcoh}.
\end{abstract}

\section{Introduction}

Soient $k$ un corps, $A$ une $k$-alg\`ebre simple centrale et $\SLb_1(A)$
le groupe alg\'{e}brique lin\'eaire usuel que nous consid\'erons comme foncteur, donc d\'efini pour chaque extension de corps $F$ de $k$
par 
\[ \SLb_1(A)(F)=\SL_1(A\otimes_k F)=\{ a \in A\otimes_k F \, | \, \Nrd_{A\otimes_k F/F} (a)=1 \}. \] 
Soit de plus $\SKb_1(A)$ le foncteur en groupes qui associe \`a chaque extension de corps $F$ de $k$
le \textit{groupe de Whitehead r\'eduit} de $A_F:=A\otimes_k F$; i.e. 
\[ \SKb_1(A)(F)=\SK_1(A_F)\cong \SL_1(A_F)/[A_F^\times,A_F^\times]. \] 
Puisque $\SKb_1(A)$ est donc li\'e au groupe alg\'ebrique lin\'eaire $\SLb_1(A)$,
il vaut la peine de l'etudier, certainement parce que Platonov nous a 
rassur\'e qu'il n'est pas forc\'ement trivial \cite[Thm. 5.19]{sk1niettriv}.  
La question de la trivialit\'e a \'et\'e ind\'ependamment  pos\'ee par Tannaka et Artin en 1943. %
Pendant plus de 30 ann\'ees, on a essay\'e de la prouver -- \textit{le probl\`eme de
Tannaka-Artin}  \cite{nakmat,wang}.  $\SKb_1(A)$ est quand-m\^eme trivial si $\ind_k(A)$ est
sans facteurs carr\'es (Th\'eor\`eme de Wang \cite{wang}).  Suslin a conjectur\'e la r\'eciproque \cite{suslinconj}. R\'ecemment, 
Merkurjev a d\'emontr\'e que cette conjecture vaut si $4\mid \ind_k(A)$ \cite{mersuslinbiquat},
et Rehman-Tikhonov-Yanchevski\u{\i} ont d\'emontr\'e qu'il suffit de d\'emontrer la conjecture pour des
alg\`ebres \`a division cycliques \cite[Thm. 0.19]{rehmanea}.

Une fa\c con d'\'etudier $\SKb_1(A)$ est de construire des \textit{invariants cohomologiques}.
Inspir\'e par Platonov \cite{sk1niettriv}, Suslin a construit un invariant
cohomologique de $\SKb_1(A)$ dans le cas o\`u $n=\ind_k(A)\in k^\times$ \cite{suslin}.
Si $[A]\in H^2(k,\mu_n)$ est la classe de Brauer de $A$, l'invariant
de Suslin est un morphisme fonctoriel en le corps (la fonctorialit\'e est en effet inh\'erente \`a la d\'efinition d'un invariant):
\[ \rho_{\text{Sus},A} : \SKb_1 (A)(k) \to H^4(k, \mu_n^{\otimes 3} ) / (H^2(k,\mu_n^{\otimes 2}) \cup [A]).\]
Pour un aper\c{c}u plus d\'etaill\'e de cette histoire, voir \cite[\S 2]{gillebourbakifr}.
  La \textit{conjecture de Bloch-Kato}, r\'ecemment d\'emontr\'ee par Voevodsky-Rost-Weibel
\cite{blochkato,voevodblk,rostblk,weibelblk}, donne que \textit{le symbole galoisien} $h_{n,k}^i:K_i^M(k)/nK_i^M(k) \to H^i(k,\mu_n^{\otimes i})$ d\'ecrit les $K$-groupes de Milnor en termes de la cohomologie galoisienne ($i\geq 0$).  On sait donc r\'e\'ecrire l'image de $\rho_{Sus}$ avec une structure de $K_n^M(k)$-module qui est 
induite par le symbole galoisien.

Le but de ce texte est de g\'en\'eraliser l'invariant de Suslin pour toute $k$-alg\`ebre simple centrale, utilisant
qu'il existe pour toute alg\`ebre simple centrale en caract\'eristique 0.  Si $k$ est un corps de caract\'eristique 
$p>0$, on le consid\`ere comme corps r\'esiduel d'un anneau  $R$ complet de valuation discr\`ete $v$ 
avec corps des fractions $K$ de caract\'eristique 0.  
On sait relever $A$ en une $R$-alg\`ebre d'Azumaya $B$ tel que $B\otimes_R k\cong A$. Alors $B_K:=B\otimes_R K$ devient
une $K$-alg\`ebre simple centrale, et Suslin a donc construit un
invariant cohomologique de $\SKb_1(B_K)$.  Vu qu'un r\'esultat de Platonov \cite[Cor. 3.13]{sk1niettriv} induit un isomorphisme  $\SKb_1(B_K)(K)\cong \SKb_1(A)(k)$, on peut essayer d'en d\'eduire des invariants cohomologiques
de $\SKb_1(A)$.  La subtilit\'e ici est que l'isomorphisme de Platonov se d\'eroule au niveau des groupes,
et pas au niveau des foncteurs, ce qui cause quand m\^eme quelques soucis.  De plus, la d\'efinition
d\'epend du choix de $R$.  Heureusement, on sait passer outre
en utilisant des \textit{anneaux de Cohen}.

On commence par le cas o\`u $\car (k)>0$ et l'indice de $A$ est inversible dans $k$ (\textit{le cas mod\'er\'e}).  L'invariant de Suslin
est en effet d\'ej\`a d\'efini dans ce cas, mais le rel\`evement permet aussi de construire un invariant
en caract\'eristique positive \`a partir de tout invariant de $\SKb_1(B_K)$ (en caract\'eristique 0) du m\^eme type. De plus, cela sera
un bon \'echauffement pour le cas sauvage.
Avant d'\'etablir le rel\`evement, on d\'ecrit les groupes de cohomologie utilis\'es
(Section \ref{sec:cohom}).  
On les ins\`ere dans le formalisme des modules de cycles de Rost \cite{Rostmodcyc} afin de 
pouvoir utiliser une caract\'erisation tr\`es utile des invariants de Merkurjev \cite[Lem. 2.1, Thm. 2.3]{invalggroup}.  
Puis, on a tous les ingr\'edients en vue de relever les invariants (Section \ref{sec:relevmod}).

Apr\`es ce rel\`evement, on continue au cas o\`u l'indice de $A$ peut \^etre non inversible dans $k$ (\textit{le cas sauvage}).  Afin de construire un invariant, il faut utiliser les groupes de cohomologie de diff\'erentielles logarithmiques
de Kato \cite{katogalcoh} qui g\'en\'eralisent la cohomologie galoisienne.  Eux aussi sont
servis d'une structure de $K_n^M(k)$-modules comme les groupes de cohomologie galoisienne ordinaires.  
Un r\'esultat de Kahn \cite{kahnweight2mot} nous fournit
 le dernier ingr\'edient pour pouvoir effectuer un rel\`evement et de telle fa\c{c}on obtenir l'invariant d\'esir\'e (Section \ref{sec:sauvage}).  Utilisant la validit\'e de la conjecture de Suslin pour les biquaternions, nous pouvons nous
 rassurer que cet invariant relev\'e est non trivial.  Nous finissons ce papier avec quelques remarques: entre autre quelques
consid\'erations sur la
 conjecture de Suslin au regard des r\'esultats obtenus.

\subsubsection*{Notations}
\addcontentsline{toc}{subsubsection}{Notations} 
  Fixons quelques notations durant tout ce texte.  
   
\begin{itemize}
\item  Pour un corps $k$, on note $k_s$ une cl\^oture
s\'eparable et $\Gamma_k=\Gal(k_s/k)$ le groupe de Galois absolu.
\item Pour un entier $m>0$ et un corps $k$, on note le $\Gamma_k$-module des racines $m$-i\`emes d'unit\'e de $k_s$ par $\mu_m$.
\item Les groupes de cohomologies utilis\'es --sauf mention expresse-- sont des groupes de cohomologie galoisienne (ou \'etale) .
\item $\phantom{ }_m \Br(k)$ est la $m$-i\`eme partie de torsion du groupe de Brauer de $k$ ($m>0$ un entier).
\item Si $F$ est un corps munie d'une valuation discr\`ete $v$, l'anneau de valuation est not\'e par $\Ocal_v$ et le corps r\'esiduel, par $\kappa(v)$.
L'extension maximale non ramifi\'ee de $F$ est not\'ee par $F_{nr}$.  Si $x\in \Ocal_v$, on note $\bar{x}$ son
r\'esidu dans $\kappa(v)$.  On utilise cette notation-ci aussi pour d'autres objets munis de r\'esidus naturels.
Une valuation discr\`ete est toujours suppos\'ee \^etre non triviale (de rang 1 et de groupe des valeurs $\mathbb{Z}$).
\item Soient $A$ une $k$-alg\`ebre simple centrale et $F$ une extension de corps de $k$, alors $A_F:=A\otimes_k F$ est
la $F$-alg\`ebre simple centrale obtenue par extension de base.
\end{itemize}

\subsubsection*{Remerciements}
\addcontentsline{toc}{subsubsection}{Remerciements} 

L'auteur remercie vivement son directeur de th\`ese, Philippe Gille, pour soutenir cet article par ses id\'ees et commentaires tr\`es utiles.  Il
remercie aussi la K.U.Leuven et l'\'Ecole Normale Sup\'erieure (Paris) pour le support
financier et l'hospitalit\'e qui ont r\'ealis\'e tant de visites.
L'auteur est aussi partiellement soutenu par \og Fonds voor Wetenschappelijk Onderzoek Vlaanderen\fg\ (G.0318.06).

\section{Modules de cycles} \label{sec:cohom}
Dans cette section, on commence par d\'ecrire les groupes de cohomologie o\`u l'invariant de
Suslin a ses valeurs.  Il sont des exemples privil\'egi\'es de modules de cycles.  On donne une br\`eve introduction \`a ce formalisme introduit par Rost \cite[\S 1,2]{Rostmodcyc}, et
puis on explique le lien avec les invariants cohomologiques d'un groupe alg\'ebrique d\'ecouvert
par Merkurjev \cite{invalggroup}.

\subsection{Groupes de cohomologie} \label{sec:cohomdef}

Dans toute cette section, soient $F$ un corps de $\car(F)=p\geq 0$ et $m> 0$ un entier 
inversible dans $F$.

\subsubsection{D\'efinition} 
 Soit  $\mu^{\otimes i}_m$ la $i$-i\`eme produit tensoriel de $\mu_m$ comme $\mathbb{Z}/m\mathbb{Z}$-module ($i\geq 0$),
alors on d\'efinit:
\[ H^{i}_m(F):=H^{i}(F,\mu_{m}^{\otimes i}(-1)) \qquad \text{avec} \qquad \mu_{m}^{\otimes i}(-1)=\Hom_{\Gamma_F}(\mu_m,\mu_{m}^{\otimes i}).\] De plus, on pose $H^{i}_m(F)=0$ pour $i<0$. 
Bien \'evidemment, on a $\mu_{m}^{\otimes i+1}(-1)=\mu_m^{\otimes i}$  pour tout $i\geq 0$, et donc $H^{i+1}_m(F)=H^{i+1}(F,\mu_m^{\otimes i})$.\footnote{On utilise le suscrit $i+1$ au lieu de $i$ juste par raisons de traditions et pour \^etre conforme au cas sauvage o\`u il semble un peu plus naturel d'utiliser ce suscrit.}
La suite exacte de Kummer 
\begin{equation}
 1 \to \mu_m(F) \to F_s^\times \overset{m}{\to} F_s^\times \to 1 \label{eq:kummer}
\end{equation}
implique l'interpr\'etation cohomologique bien connue 
du sous-groupe de $m$-torsion du groupe de Brauer $\phantom{ }_m\Br(F) \cong H^2(F,\mu_{m})$.
\subsubsection{Structure de $K_n(F)$-module} \label{sec:symbgal} Consid\'erons les \textit{$K$-groupes de Milnor}\footnote{On utilise surtout des $K$-groupes de Milnor dans la suite.  Donc, pour ne pas alourdir inutilement la notation, on oublie le suscrit $M$ de la notation usuelle $K_n^M(F)$ des $K$-groupes
de Milnor, et on utilise la notation $K_n^Q$ pour les $K$-groupes de Quillen.} $K_n(F)$ pour un entier $n$.  Rappelons que 
\[ K_n(F)=\underset{n \text{ fois}}{\underbrace{F^\times \otimes_{\Zb} \ldots \otimes_{\Zb} F^\times}}/J,\] 
avec $J$ le sous-groupe engendr\'e par des symboles de la forme $x_1\otimes \ldots \otimes x_n$
pour lequel $x_i+x_j=1$ pour $1\leq i < j \leq n$.  Les symboles primitifs sont not\'es abbr\'eg\'es $\{x_1,\ldots,x_n\}$.   
La suite exacte de Kummer \eqref{eq:kummer}
implique au niveau de cohomologie
$ K_1(F)/mK_1(F) = F^\times/(F^\times)^m \cong H^1(F,\mu_m). $
Utilisant le cup-produit, on obtient \textit{le symbole galoisien} 
\begin{equation}
 h_{m,F}^n: K_n(F)/mK_n(F) \to H^n(F,\mu_m^{\otimes n}), \label{eq:symbgal}
\end{equation}
 qui est un isomorphisme (conjecture de Bloch-Kato - Th\'eor\`eme de Voevodsky-Rost-Weibel \cite{blochkato,voevodblk,rostblk,weibelblk}).
La structure de $K_n(F)$-module de $H^{i+1}_m(F)$ pour un entier $i\geq 0$ est d\'efini par le cup produit avec le symbole
galoisien:
\[ K_n(F) \times H^{i+1}_m(F) \to H^{n+i+1}_m(F): (a,b) \mapsto h_{m,F}^n(\bar{a}) \cup b.  \]
On note le produit scalaire par $a\cdot b:=h_{m,F}^n(\bar{a}) \cup b$ pour $a\in K_n(F)$, $\bar{a}$ sa classe dans $K_n(F)/mK_n(F)$ et $b\in H^{i+1}_m(F)$.

\subsubsection{Fl\`eches de r\'esidu} 
Si $v$ est une valuation discr\`ete sur $F$ de corps r\'esiduel $\kappa(v)$, on a une  fl\`eche de r\'esidu pour tout entier $i\geq 0$,
\[ \partial_v^{i+1}: H^{i+1}_m(F) \to H^{i}_m(\kappa(v)),  \] 
qui est bien expliqu\'ee par Serre \cite[\S 6,7]{cohinv}.
De plus, si $F$ est complet pour la valuation, elle a un scindage, parce que tout \'el\'ement $\alpha$ de $H^{i+1}_m(F)$ s'\'ecrit comme
$\alpha_{i+1} + (\pi)\cup \alpha_i$ avec $\pi\in F$ un \'el\'ement uniformisant, $(\pi)$ sa classe dans $F^\times/(F^\times)^m=H^1(F,\mu_m)$ et les $\alpha_j\in H^{j}_m(\kappa(v))$ se plongeant dans $H^{j}_m(F)$
\cite[7.11]{cohinv}.  On a donc une suite exacte scind\'ee int\'eressante:
\begin{equation}
 0 \to H^{i+1}_m(\kappa(v)) \to H^{i+1}_m(F) \overset{\partial_v^{i+1}}{\to} H^{i}_m(\kappa(v)) \to 0. \label{eq:sesrv4}
\end{equation}

\subsubsection{Version relative} \label{sec:relatifmod}
Soit $A$ une $F$-alg\`ebre simple centrale de $\ind_F(A)=n\in F^\times$.
On consid\`ere sa classe de Brauer $[A]\in \phantom{}_n\Br(F)\cong H^2(F,\mu_n)$.
Utilisant le cup-produit de la cohomologie galoisienne, on d\'efinit (pour $i\geq 1$)
\[ H^{i+1}_{n,A}(F) := 
H^{i+1}_n(F)/ \left(H^{i-1}(F,\mu_{n}^{\otimes i-1})\cup [A]\right). 
\]
Si $F$ est munie d'une valuation  discr\`ete $v$, 
on peut \'etendre les r\'esidus de $H_n^{i+1}(F)$ \`a des r\'esidus relatifs.
On note $\bar{A}:= A \otimes_{\Ocal_v} \kappa(v)$, la \textit{$\kappa(v)$-alg\`ebre simple centrale r\'esiduelle} de $A$.  
La description en terme de cocycles explicites 
\cite[\S 6]{cohinv} garantit bien que 
\[ \partial_v^{i+1}(H^{i-1}(F,\mu_{n}^{\otimes i-1})\cup [A]) \subset H^{i-2}(\kappa (v),\mu_{n}^{\otimes i-2})\cup [\bar{A}]. \]
\c{C}a implique donc qu'on a un diagramme commutatif (pour $i\geq 2$):
\begin{equation} \label{eq:diagmod}
\xymatrix{
0 \ar[r] 
& H^{i-1}(\kappa(v), \mu_n^{\otimes i-1}) \ar[r] \ar[d]^{\cup [\overline{A}]}
& H^{i-1}(F, \mu_n^{\otimes i-1}) \ar[r] \ar[d]^{\cup [A]}
& H^{i-2}(\kappa(v), \mu_n^{\otimes i-2}) \ar[r] \ar[d]^{\cup [\overline{A}]}
& 0 
\\
0 \ar[r] 
& H^{i+1}(\kappa(v), \mu_n^{\otimes i}) \ar[r] 
& H^{i+1}(F, \mu_n^{\otimes i}) \ar[r]
& H^{i}(\kappa(v), \mu_n^{\otimes i-1}) \ar[r] 
& 0.
}\end{equation}
 Le lemme du serpent permet donc de construire une suite exacte: 
\begin{equation}
  0 \to H^{i+1}_{n,\bar{A}}(\kappa(v)) \to H^{i+1}_{n,A}(F) \overset{\partial_{v,A}^{i+1}}{\to} H^{i}_{n,\bar{A}}(\kappa(v)) \to 0. \label{eq:suitemod}
\end{equation}
Puisque la suite exacte \eqref{eq:diagmod} est scind\'ee, la suite exacte \eqref{eq:suitemod}
l'est aussi.  \`A noter que d'apr\`es la conjecture de Bloch-Kato et la structure de $K_n(F)$-module, on a une d\'efinition
\'equivalente:
\begin{equation} \label{eq:defaltmod}
 H^{i+1}_{n,A}(F) = 
H^{i+1}(F,\mu_{n}^{\otimes i})/ \left(K_{i-1}(F)\cdot [A] \right).
\end{equation}
\subsection{D\'efinition} \label{sec:def}

Les propri\'etes communes des groupes de cohomologies $H_n^\ast(F)$ et des groupes de $K$-th\'eorie de Milnor
ont inspir\'es Rost \`a d\'efinir 
une structure formelle qui respecte les m\^emes propri\'et\'es
homologiques \cite[\S 1.2]{Rostmodcyc}.  Nous rappelons en bref ce formalisme des modules de cycles.

\subsubsection{D\'efinition d'un module de cycles} \label{sec:defgenmodcyc}
Soient $R$ un anneau de valuation discr\`ete, $\rcorps$ la cat\'egorie des $R$-corps (des $R$-alg\`ebres qui sont des corps) et $\mathfrak{Ab}$ la cat\'egorie
des groupes ab\'eliens.  Un \textit{module de cycles $M$ de base $R$} est un foncteur
\[ \rcorps \to \mathfrak{Ab} \]
munie d'une graduation $M=(M_j)_{j\geq 0}$ et des donn\'ees D1-D4:\footnote{Si on utilise $M_j$ pour $j<0$, dans une notation libre,  il est 0.} ($E,F$ des objets de $\rcorps$ et $\varphi$ un morphisme de $\rcorps$)
\begin{enumerate}[\bf D1:]
 \item Pour tout $\varphi:F\to E$, on a $\varphi_\ast:M(F)\to M(E)$ de degr\'e 0.
 \item Pour tout $\varphi:F\to E$ fini, on a $\varphi^\ast:M(E)\to M(F)$ de degr\'e 0.
 \item Pour tout $F$, le groupe $M(F)$ a une structure de $K_n(F)$-module tel que \break $K_n(F) \cdot M_m(F) \subset M_{n+m}(F)$ ($n,m\geq 0$ des entiers).
 \item Si $F$ est un $R$-corps de valuation discr\`ete $v$, 
il existe un \textit{r\'esidu} $\partial_v:M(F)\to M(\kappa(v))$ de degr\'e $-1$.
\end{enumerate}
Ces donn\'ees doivent respecter des r\`egles de compatibilit\'e (R1a-R3e) et de g\'eom\'etrie (FD et C) -- voir Appendice \ref{sec:append} et \textit{loc. cit.}  \`A noter que, pour obtenir ses buts, Rost lui-m\^eme met plus de restrictions sur la base $R$, mais il commente qu'il est permis de mod\'erer les conditions (ibid., \S 1, p. 328). 

\subsubsection{La base et la coexistence de deux modules de cycles} \label{sec:coexistence}
Le cas classique est celui o\`u la base est 
un corps.  Dans ce texte, on utilise la terminologie des modules de cycles
pour une base qui est un anneau $R$ de valuation discr\`ete complet avec corps des fractions $K$
et corps r\'esiduel $k$.  Un module de cycles $M$ avec une telle base associe donc \`a toute extension de corps
$L$ de $K$ un groupe gradu\'e $M(L)$ et de m\^eme, \`a toute extension de corps
$\Lbar$ de $k$, un groupe gradu\'e $M(\Lbar)$.  

\`A noter que l'on peut bien restreindre un module de cycles
de base $R$ en un module de cycles de base $K$ ou \'egalement de base $k$ en se
restreignant aux extensions de corps de $K$ ou de $k$.  Un module de cycles $M$ avec base $R$ est
donc rien d'autre que la coexistence de deux modules de cycles avec un corps comme base (not\'es $M|_{k}$ et $M|_{K}$)
avec un lien donn\'e par D4.
Dans la suite, on utilise les modules de cycles de base $R$ afin de faciliter la notation et de travailler dans un cadre plus g\'en\'eral.  N\'eanmoins, on pourrait reformuler les arguments,
travailler avec deux modules de cycles et utiliser ces liens comme ext\'erieurs aux modules de cycles m\^emes: comme
des donn\'ees suppl\'ementaires.  

\subsubsection{Complexe de Gersten} \label{sec:gersten}
Soient $F$ un $R$-corps, $X$ une $F$-vari\'et\'e et $M$ un module de cycles, alors l'existence des r\'esidus et
des r\`egles de modules de cycles induisent
un complexe de cycles de Gersten $C_\ast(X,M_j)$ \cite[\S 3.3]{Rostmodcyc} ($i,j\geq 0$):
\[ \ldots \to  \oplus_{x\in X^{(i-1)}} M_{j-i+1} (F(x)) \overset{\partial^{i-1}}{\to} 
 \oplus_{x\in X^{(i)}} M_{j-i} (F(x)) \overset{\partial^i}{\to} 
\oplus_{x\in X^{(i+1)}} M_{j-i-1} (F(x)) \to \ldots, \]
o\`u $X^{(i)}$ d\'esigne l'ensemble des point de codimension $i$ dans $X$ et $F(x)$ est le corps
r\'esiduel de $x$, un point de codimension $i$.  La fl\`eche $\partial^i$ est bien la somme des r\'esidus induits
par les valuations associ\'ees \`a un point de codimension 1 de $X^{(i)}$.
L'homologie de ce complexe au cran $i$ est not\'ee $A^i(X,M_j)$.

\subsubsection{Exemples privil\'egi\'es} \label{sec:exmodcyc}
Lions cette section \`a la pr\'ec\'edente.  Soient $R$ un anneau de valuation discr\`ete complet
de corps des fractions $K$ et corps r\'esiduel $k$ et pour une $K$-alg\`ebre simple centrale $A$ de $\ind_K(A)=n$ tel que $n\in K^\times$ et $n\in k^\times$.  Alors, les foncteurs
\begin{eqnarray*}
\Hcal^\ast_m:=(\Hcal^i_m)_{i\geq 0} : \rcorps \to \mathfrak{Ab} &: &F \mapsto (H^{i}_m(F))_{i\geq 0} \text{ et} \\
\Hcal^\ast_{n,\Acal}:=(\Hcal^i_{n,\Acal})_{i\geq 2} : \rcorps \to \mathfrak{Ab}& :& 
F \mapsto  \left(H^{i}_{n,A_F}(F)\right)_{i\geq 2} 
\end{eqnarray*}
sont bien des modules de cycles o\`u  $A_F:=A\otimes_R F$.
\`A noter que pour une extension $F$ de $K$, la $K$-alg\`ebre $A_F$ est 
de nouveau simple centrale.  On sait que $\ind_k (\overline{A})=n$ avec $\overline{A}=A\otimes_R k$ (voir d\'emonstration
du Corollaire \ref{corr:sk1iso}). Alors pour tout $\rcorps$ $F$, $\ind(A_F) | \, n$  et $[A_F]$ se trouve donc bien dans  $\phantom{ }_n \Br(F)$.
La d\'efinition du deuxi\`eme module de cycles est donc  consistante.

La v\'erification des r\`egles du premier cas pour des $R$-corps d'\'egale caract\'eristique est d\'ecrite par Rost \cite[1.11]{Rostmodcyc}.  Le cas de caract\'eristique mixte suit de fa\c{c}on analogue.
Le premier module de cycle induit la v\'erification du second puisque les donn\'ees et r\`egles se continuent au niveau relatif (voir aussi
\eqref{eq:suitemod}).  

Un autre exemple de modules de cycles est la $K$-th\'eorie de Milnor.
 La donn\'ee D1 est d\'efinie de fa\c{c}on \'evidente.  
 Soit
$E$ une extension finie de corps de $F$, alors la donn\'ee D2 est induite par la norme $N_{E/F}$ appliqu\'ee aux g\'en\'erateurs \cite[Ch. I, \S 5]{basstate}.  
 La donn\'ee D3 est d\'efinie par la structure multiplicative des $K$-groupes:
\[ K_n(F) \times K_m(F) \mapsto K_{n+m}(F): (\{ x_1, \ldots , x_n\},\{ y_1, \ldots , y_m\}) \mapsto (\{ x_1, \ldots , x_n,y_1,\ldots y_m\}).\]
 Soit $F$ un corps de valuation discr\`ete $v$, alors le r\'esidu $K_n(F)\to K_{n-1}(\kappa(v))$ --la donn\'ee D4-- est d\'efini par 
\begin{eqnarray*}
\{ \pi, x_2, \ldots ,x_n \} & \mapsto & \{\bar{x}_2,\ldots, \bar{x}_n \}, \\
\{ x_1,x_2,\ldots,x_n \} & \mapsto & 0,
\end{eqnarray*}
 avec $x_1,\ldots,x_n \in \Ocal_v^\times$ et $\pi$ une uniformisante de $F$ \cite[Lem. 2.1]{milnor}.

\subsection{Lien avec les invariants} \label{sec:lieninv}

Dans toute cette section, soient $k$ un corps et $M=(M_j)_{j\geq 0}$ un module de cycles de base $k$.    Merkurjev a d\'ecouvert un lien int\'eressant f\'econd entre les groupes $A^i(\GB,M_j)$ et les invariants cohomologiques d'un $k$-groupe alg\'ebrique $\GB$ dans $M$ de degr\'e $j$.  Expliquons-le,
et commen\c{c}ons par rappeler la notion d'un invariant d'un groupe alg\'ebrique.

\subsubsection{D\'efinition d'un invariant}  
Des invariants sont des objets fonctoriels, donc il faut des foncteurs.  
On consid\`ere un foncteur $\GB:\kcorps \to \mathfrak{Ab}$ (par exemple un groupe alg\'ebrique) et $M_j$ (pour $j\geq 0$) comme foncteur $\kcorps \to \mathfrak{Ab}$.

Un invariant $\rho$ de $\GB$ dans $M$ de degr\'e $j$ est une
transformation naturelle des foncteurs $\GB \to M_j$.  En particulier, pour toute
extension de corps $F$ de $k$, il consiste en un morphisme fonctoriel
\[ \rho_{F}: \GB(F) \to M_j(F). \]
Tous les invariants de $\GB$ dans $M$ de degr\'e $j$ forment un groupe ab\'elien,
not\'e  $\Inv^j(\GB,M)$.  

\subsubsection{Le lien de Merkurjev} \label{sec:lienmerk}
Soit $\GB$ un groupe alg\'ebrique, alors Merkurjev a construit un morphisme injectif
\begin{equation}
 \theta: \Inv^j(\GB,M) \to A^0(\GB,M_j): \rho \mapsto \rho_{K}(\xi), \label{eq:invarmerk}
\end{equation}  
avec $K=k(\GB)$ et $\xi\in \GB(K)$ le point g\'en\'erique de $\GB$.    
Il d\'emontre que l'image est le sous-groupe $A^0(\GB,M_j)_{\text{mult}}$ contenant les \textit{\'el\'ements  multiplicatifs} de $A^0(\GB,M_j)$  \cite[Lem. 2.1 et Thm. 2.3]{invalggroup}.  Ce sont
les \'el\'ements $x\in A^0(\GB,M_j)$ tels que
\[ p_1^\ast (x)+p_2^\ast(x)=m^\ast(x), \]
o\`u $p_1^\ast,p_2^\ast$ et $m^\ast$ sont les morphismes $A^0(\GB,M_j) \to
A^0(\GB \times \GB,M_j)$ induits par les deux projections $p_1,p_2:\GB \times \GB \to \GB$ et la multiplication $m:\GB\times\GB\to \GB$. 

Il d\'emontre  aussi que $A^0(\GB,M_j)_{\text{mult}}\subset \tilde{A}^0(\GB,M_j)$, 
avec $\tilde{A}^0(\GB,M_j)$
le \textit{sous-groupe r\'eduit} de
$A^0(\GB,M_j)$ (ibid., Lem. 1.9).  Le sous-groupe r\'eduit est le noyau du morphisme $u^\ast:A^0(\GB,M_j)\to A^0(\eenB,M_j)$ qui est induit par le morphisme d'unit\'e $u:\eenB \to \GB$.  Le morphisme $u^\ast$ induit m\^eme un scindage
$A^0(\GB,M_j) \cong \tilde{A}^0(\GB,M_j) \oplus A^0(k,M_j)$.

\subsubsection{L'invariant de Suslin}
Soient $k$ un corps et $A$ une $k$-alg\`ebre simple centrale de norme r\'eduite $\Nrd_{A/k}$,
alors le groupe de Whitehead r\'eduit $\SK_1(A)$ est isomorphe \`a $\SL_1(A)/[A^\times,A^\times]$.
Consid\'erant le foncteur (qui n'est pas un groupe alg\'ebrique)
\[ \SKb_1(A): \kcorps\to \mathfrak{Ab}: F \mapsto \SKb_1(A)(F) = \SK_1(A\otimes_k F),\]
Suslin a introduit
un invariant \cite[\S 3]{suslin}
\begin{equation*} 
\rho_{\text{Sus},A} \in \Inv^4(\SKb_1(A), \Hcal^\ast_{n,\Acal}). 
\end{equation*}
Ici, $\Hcal^\ast_{n,\Acal}=(\Hcal^{j+1}_{n,A})$ est un module de cycles de base $k$.  Faisant de bonnes hypoth\`eses sur $A$,
on peut le voir comme un module de base d'un anneau $R$ complet de valuation discr\`ete restreint \`a son
corps de fractions ou \`a son corps r\'esiduel selon \S \ref{sec:def} \ref{sec:coexistence} (voir aussi \S \ref{sec:relevbase} \ref{sec:cohen}).

Suslin d\'emontre en plus que pour une extension de corps $F$ de $k$, les repr\'esentants de l'image 
de $\SKb_1(A)(F)$ sont des \'elements du noyau de 
$H^4_{n}(F)\to H^4_{n}(F(X))$, o\`u  $X$ d\'esigne la vari\'et\'e de Severi-Brauer associ\'ee \`a $A$ (loc. cit.).

\section{Le rel\`evement, le cas mod\'er\'e} \label{sec:relevmod}

Dans cette section, on utilise un rel\`evement de la caract\'eristique positive \og mod\'er\'ee\fg\ \`a la caract\'eristique 0 
afin de construire des invariants cohomologiques en caract\'eristique positive fond\'es sur l'existence des invariants
en caract\'eristique 0.  En plus, de relier les deux cas, on peut retrouver plus d'information sur la caract\'eristique positive
moder\'ee par la caract\'eristique 0 et vice versa.  

\subsection{La strat\'egie} \label{sec:strategie}

Soient $k$ un corps et $A$ une $k$-alg\`ebre simple centrale.
Pour construire des invariants de $\SKb_1(A)$, on voudrait utiliser l'isomorphisme de Merkurjev \eqref{eq:invarmerk}
qui nous permet d'obtenir des r\'esultats en travaillant avec des complexes de Gersten.   
Malheureusement, cet isomorphisme ne vaut que pour des
groupes alg\'ebriques, et $\SKb_1(A)$ n'en est pas un.  
D\^u \`a la projection $\SLb_1(A)(F)\to  \SLb_1(A)(F)/[A_F^\times ,A_F^\times]\cong \SKb_1(A)(F)$ pour chaque extension de corps $F$ de $k$, on obtient quand-m\^eme une injection de groupes d'invariants.

\begin{lemma} \label{prop:sksl}
Soient $k$ un corps, $A$ une $k$-alg\`ebre simple centrale et $M$ un module de cycles.  La projection de $k$-foncteurs $\pi: \SLb_1(A)\to \SKb_1(A)$ induit pour tout entier $j$ une injection 
\[ \tilde{\pi}: \Inv^j(\SKb_1(A),M) \hookrightarrow \Inv^j(\SLb_1(A),M).\]
\end{lemma}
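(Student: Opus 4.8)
The plan is to show that the map $\tilde\pi$ obtained by precomposition with $\pi$ is well-defined and injective. Well-definedness is immediate: if $\rho\in\Inv^j(\SKb_1(A),M)$, then for each field extension $F/k$ we set $(\tilde\pi\rho)_F := \rho_F\circ\pi_F : \SLb_1(A)(F)\to\SKb_1(A)(F)\to M_j(F)$, and functoriality of $\tilde\pi\rho$ follows from functoriality of $\rho$ together with the fact that $\pi$ is a morphism of $k$-functors (the naturality square for $\pi$ commutes by construction, since $\pi$ is defined extension-by-extension as the canonical projection $\SL_1(A_F)\to\SL_1(A_F)/[A_F^\times,A_F^\times]$). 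Clearly $\tilde\pi$ is a group homomorphism, since the group structure on $\Inv^j(-,M)$ is pointwise in $M_j(F)$.

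For injectivity, suppose $\tilde\pi\rho = 0$, i.e. $\rho_F\circ\pi_F = 0$ for every $F/k$. I must show $\rho_F = 0$ for every $F/k$. The key point is that $\pi_F : \SLb_1(A)(F)\to\SKb_1(A)(F)$ is \emph{surjective} for every field extension $F/k$ — indeed it is by definition the quotient map $\SL_1(A_F)\to\SL_1(A_F)/[A_F^\times,A_F^\times]=\SK_1(A_F)$. Hence $\rho_F$ vanishes on all of $\SKb_1(A)(F)=\im(\pi_F)$, so $\rho_F=0$. Since $F$ was arbitrary, $\rho=0$ as an invariant, and $\tilde\pi$ is injective.

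There is essentially no obstacle here: the statement is a formal consequence of $\pi$ being a natural transformation that is surjective in each degree. The only thing to be slightly careful about is the distinction, emphasized in the introduction, between maps of functors and maps of groups — but here $\pi$ genuinely is a morphism of functors $\kcorps\to\mathfrak{Ab}$ (the commutator subgroup $[A_F^\times,A_F^\times]$ is visibly functorial in $F$, being generated by commutators of units, which map to commutators of units under any base extension), so precomposition makes sense functorially. One could phrase the whole argument as: the functor $\Inv^j(-,M)$ is contravariant in the first variable, and it sends epimorphisms of functors (pointwise-surjective natural transformations) to monomorphisms of abelian groups; applying this to the pointwise-surjective $\pi:\SLb_1(A)\twoheadrightarrow\SKb_1(A)$ gives the claim.
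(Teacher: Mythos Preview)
Your argument is correct and is exactly the natural one: precomposition with the pointwise-surjective natural transformation $\pi$ defines $\tilde\pi$, and surjectivity of each $\pi_F$ forces injectivity. The paper in fact states this lemma without proof, treating it as immediate; your write-up simply makes explicit the one-line justification the author left to the reader.
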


Nous utilisons donc des invariants de 
$\SLb_1(A)$ qui nous permettent d'utiliser le r\'esultat de Merkurjev.  
En plus, le travail que nous allons effectuer n'est pas
fond\'e sur la d\'efinition de l'invariant de Suslin, mais sur son existence.  Tout autre
invariant analogue peut donc \^etre relev\'e de la m\^eme fa\c{c}on.  

Nous commen\c{c}ons par expliquer la strat\'egie g\'en\'erale du rel\`evement.  Dans cette description, nous ne donnons
pas d'arguments explicites et d\'etaill\'es.  Ceux-ci se trouvent dans les sections suivantes.

\begin{enumerate}[(i)] 
\item \textit{Construire un invariant auxiliaire.}
Soit $k$ un corps de $\car(k)=p>0$ tel que $p$ ne divise pas $\ind_k(A)$.  Il existe 
un anneau $R$ complet de valuation discr\`ete $v$ tel que $k$ soit le corps r\'esiduel et que le corps des fractions $K$ de
$R$ est de caract\'eristique 0.  
Alors, $A$ se rel\`eve en une $R$-alg\`ebre d'Azumaya $B$.  De telle fa\c{c}on, $B_K:=B\otimes_R K$ est
une $K$-alg\`ebre simple centrale.  Afin de construire un invariant dans 
$\Inv^4(\SKb_1(A), \Hcal_{n,A}^\ast)$, nous construisons un invariant auxiliaire $\rho' \in \Inv^3(\SKb_1(A), \Hcal_{n,A}^\ast)$.  
Pour chaque extension de corps $k'$ de $k$, il faut donc d\'efinir un morphisme
\[  \rho'_{k'}:\SKb_1(A)(k') \to H^3_{n,A}(k'). \]
Soit $K'$ un corps complet de valuation discr\`ete $w$ de corps r\'esiduel $k'$ tel que $K'$ est une extension de $K$ et tel que
$w$ prolonge $v$.  Alors, on est donn\'e un isomorphisme  $\SKb_1(B_K)(K')\to \SKb_1(A)(k')$.  \`A partir
d'un invariant $\rho \in \Inv^4(\SKb_1(B_K), \Hcal_{n,B_K}^\ast)$, le r\'esidu du module de cycles \eqref{eq:suitemod} donne un morphisme
\[ \rho'_{k'}:\SKb_1(A)(k') \to H^3_{n,A}(k'). \]
Celui-ci n'est pas n\'ecessairement un invariant, puisque la fonctorialit\'e en les extensions de corps n'est pas
imm\'ediatement obtenue.  En effet, il y a plusieurs mani\`eres afin de
trouver des corps de valuation $K'$ comme ci-dessus.  Pour r\'esoudre ce probl\`eme, on utilise des anneaux de Cohen qui
sont suffisamment canoniques.  
\item \textit{En d\'eduire l'invariant recherch\'e.}
Puisque le r\'esidu des modules de cycles se niche dans une suite exacte fonctorielle \eqref{eq:suitemod}, on obtient
un invariant de degr\'e 4 dans $\Inv^4(\SKb_1(A), \Hcal_{n,A}^\ast) $
d\`es que l'invariant $\rho'$ est trivial.  D'apr\`es le Lemme \ref{prop:sksl},  
pour d\'emontrer la trivialit\'e, il suffit de d\'emontrer la trivialit\'e de l'invariant $\tilde{\pi}(\rho')$  
de $\SLb_1(A)$.  Comme annonc\'e, on utilise le morphisme $\theta$ de Merkurjev \eqref{eq:invarmerk} pour ce but. 
On d\'emontre que $\theta(\tilde{\pi}(\rho'))=0$ 
en travaillant sur la d\'efinition des groupes $\tilde{A}^0$ et utilisant
des r\'esultats connus.
\end{enumerate}

\subsection{Objets de base} \label{sec:relevbase}

Avant de relever des invariants, on doit \^etre capable de relever les objets de
base d'une fa\c{c}on adapt\'ee.  Nous expliquons donc en bref le rel\`evement des
corps et des alg\`ebres simples centrales.

\subsubsection{Alg\`ebres simples centrales} \label{sec:relevasc}

Soient $k$ un corps et $A$ une $k$-alg\`ebre simple centrale.  Soit $R$ un anneau complet de valuation discr\`ete $v$ tel que $k$ soit le corps r\'esiduel et tel que  $K=\Frac(R)$
est de caract\'eristique 0 (par exemple, un anneau de Cohen $R$ avec son corps
de fractions -- voir \ref{sec:cohen}).  

Soient $P(R)$, respectivement $P(k)$, l'ensemble des classes d'isomorphismes des $R$-alg\`ebres
d'Azumaya, respectivement des $k$-alg\`ebres simples centrales.  Alors, l'application r\'esiduelle 
$P(R)\to P(k)$ qui associe \`a la classe d'une $R$-alg\`ebre d'Azumaya $B$ la classe de $B\otimes_R k$,
est bijective \cite[Thm. 6.1]{grothbrauer}.
Alors, il existe une $R$-alg\`ebre d'Azumaya \textit{relev\'ee} (i.e. tel que $B\otimes_R k\cong A$)
de $A$ de m\^eme indice et degr\'e, appelons-la $B$.  Alors, $B_K:=B\otimes_R K$ est une $K$-alg\`ebre simple
centrale.  La bijection $P(R)\to P(k)$ induit un isomorphisme $\Br(R)\cong \Br(k)$, et de plus il y a
une injection $\Br(R)\to \Br(K)$ \cite[Thm. 7.2]{ausgold}.  Donc en somme, on a une injection $\Br(k)\to \Br(K)$.

Heureusement, $\SKb_1(A)(k)$ et $\SKb_1(B_K)(K)$ sont isomorphes.  Ce r\'esultat est essentiellement
d\^u \`a Platonov pour des alg\`ebres \`a division.  Pour une $K$-alg\`ebre \`a division $D$, la valuation $v$ s'\'etend \`a une valuation $w=\frac{1}{m}v\circ \Nrd_{D/K}$ sur $D$ avec 
$\Nrd_{D/K}$ la norme r\'eduite de $D$ et $m>0$  le g\'en\'erateur de $v\circ \Nrd_{D/K}(D)\subset \mathbb{Z}$
\cite[Ch. XII, \S 2]{serrecorloc}. Soit $\Ocal_D$ l'anneau de valuation de $w$ avec id\'eal maximal $\Pcal_D$, alors on note 
$\Dbar=\Ocal_D/\Pcal_D$, la \textit{$k$-alg\`ebre \`a division r\'esiduelle} -- voir aussi \cite[\S 2]{wadsworth}.  
L'application r\'esiduelle $\Ocal_D\to \Dbar$ se restreint alors \`a un morphisme r\'esiduel 
$\SLb_1(D)(K)\to \SLb_1(\Dbar)(k)$, et Platonov a d\'emontr\'e la propri\'et\'e de rigidit\'e suivante.

\begin{thm}[{\cite[Prop. 3.4, Thm. 3.12, Cor. 3.13]{sk1niettriv}}] \label{thm:platsksl}
Soient $K$ un corps complet de valuation discr\`ete $v$ avec $k$ son corps r\'esiduel et $D$
une $K$-alg\`ebre \`a division.  Le morphisme r\'esiduel
\[ \SLb_1(D)(K) \to \SLb_1(\Dbar)(k) \]
est surjectif de noyau contenu dans $[D^\times,D^\times]$.  Il factorise donc
\`a travers $\SKb_1(D)$, et cette factorisation induit un isomorphisme
\[ \SKb_1(D)(K) \cong \SKb_1(\Dbar)(k). \]
\end{thm}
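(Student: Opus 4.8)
The plan is to follow Platonov's argument (\cite{sk1niettriv}): make the valuation $w$ on $D$ explicit, pass to the congruence filtration of $\Ocal_D^\times$, and reduce every clause of the statement to linear algebra in the residue algebra $\Dbar$. First I would fix notation. Since $K$ is complete, $w=\tfrac1m v\circ\Nrd_{D/K}$ is the unique prolongation of $v$ to $D$, so $\Ocal_D$, its maximal two-sided ideal $\Pcal_D=\pi_D\Ocal_D$ and $\Dbar=\Ocal_D/\Pcal_D$ are canonical; I write $U^{(i)}=1+\Pcal_D^{\,i}$, so that $\Ocal_D^\times/U^{(1)}\cong\Dbar^\times$ and $U^{(i)}/U^{(i+1)}\cong(\Dbar,+)$ via $1+t\mapsto\overline{\pi_D^{-i}t}$, together with the analogous filtration $U_v^{(i)}=1+\pi_v^{\,i}\Ocal_v$ of $\Ocal_v^\times$. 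If $\Nrd_{D/K}(a)=1$ then $w(a)=0$, hence $a\in\Ocal_D^\times$ and $\bar a\in\Dbar^\times$ makes sense; as the reduced norm reduces compatibly (cf.\ \cite[\S 2]{wadsworth}), the residue map $a\mapsto\bar a$ does send $\SLb_1(D)(K)$ into $\SLb_1(\Dbar)(k)$, and its kernel is exactly $\SLb_1(D)(K)\cap U^{(1)}$.

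For surjectivity I would lift a given $\bar u\in\SLb_1(\Dbar)(k)$ to some $u_0\in\Ocal_D^\times$; the compatibility of reduced norms then gives $\Nrd_{D/K}(u_0)\in U_v^{(1)}$, so it is enough to show that $\Nrd_{D/K}\colon U^{(1)}\to U_v^{(1)}$ is onto — then dividing $u_0$ by an element of $U^{(1)}$ with the same reduced norm produces an element of $\SLb_1(D)(K)$ with the prescribed residue. Surjectivity of $\Nrd$ on congruence subgroups I would obtain by successive approximation: by completeness it suffices to treat the induced maps on the graded quotients $\Dbar\to k$, which are (up to a nonzero scalar) reduced-trace maps, hence surjective when the residue data is separable; the wildly ramified case reduces to the separable one by working inside a maximal separable subfield of $D$. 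This is the step relying most directly on the valuation-theoretic input of \cite{sk1niettriv,wadsworth}.

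The heart of the proof is the inclusion $\SLb_1(D)(K)\cap U^{(1)}\subseteq[D^\times,D^\times]$. I would climb the filtration: given $a\in\SLb_1(D)(K)\cap U^{(i)}$, build a product $c$ of commutators in $D^\times$ with $ac^{-1}\in U^{(i+1)}$ and then pass to the limit, using completeness. The class in $U^{(i)}/U^{(i+1)}\cong\Dbar$ of a commutator $[1+t,x]$ (with $t\in\Pcal_D^{\,i}$, $x\in\Ocal_D^\times$) equals $\overline{\pi_D^{-i}t}-\bar x\,\overline{\pi_D^{-i}t}\,\bar x^{-1}$, so commutators reach at least the span of the additive commutators $\bar y\bar z-\bar z\bar y$ in $\Dbar$, which for a simple algebra is the reduced-trace-zero subspace; on the other hand, from the expansion $\Nrd_{D/K}(1+s)=1+\Tr_{D/K}(s)+(\text{higher order in }s)$ the condition $\Nrd_{D/K}(a)=1$ confines the class of $a$ to (essentially) that same subspace, and matching the two closes the inductive step, with the commutators $[x,\pi_D]$ providing the complementary directions coming from the residue automorphism. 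I expect this interlocking of the linear algebra of commutators in $\Dbar$, the reduced-norm-one constraint, and the limiting process to be the main obstacle of the whole proof.

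Finally, the residue map descends to $\SK_1$: writing $x=\pi_D^{\,a}u$, $y=\pi_D^{\,b}v$ with $u,v\in\Ocal_D^\times$ and $\sigma$ the conjugation by $\pi_D$, one computes
\[ [x,y]=\sigma^a(u)\,\sigma^{a+b}(v)\,\sigma^{a+b}(u^{-1})\,\sigma^b(v^{-1}), \]
so $\overline{[x,y]}$ is a $\bar\sigma$-twisted commutator in $\Dbar^\times$; a short bookkeeping argument (using that $\pi_D^{\,e}$, $e=[\Gamma_D:\Gamma_K]$, differs from the central $\pi_v$ only by a unit, so that $\bar\sigma^{\,e}$ is inner) shows its class in $\SKb_1(\Dbar)(k)$ is trivial — and when $D$ is unramified (the only case needed here for the Azumaya lift) this is immediate, since then $\sigma$ is trivial and the residue of a commutator is literally a commutator. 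Hence the residue map induces $\SKb_1(D)(K)\to\SKb_1(\Dbar)(k)$, which is onto by the second paragraph; for injectivity, if $a\in\SLb_1(D)(K)$ has residue $\bar a=\prod_j[\bar x_j,\bar y_j]$ I lift to $c=\prod_j[x_j,y_j]\in[D^\times,D^\times]\cap\Ocal_D^\times$ via unit lifts, so that $ac^{-1}$ has trivial residue, whence $ac^{-1}\in[D^\times,D^\times]$ by the kernel step and therefore $a\in[D^\times,D^\times]$ is trivial in $\SKb_1(D)(K)$. This yields the isomorphism $\SKb_1(D)(K)\cong\SKb_1(\Dbar)(k)$.
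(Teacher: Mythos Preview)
The paper does not supply its own proof of this theorem: it is quoted verbatim from Platonov \cite[Prop.~3.4, Thm.~3.12, Cor.~3.13]{sk1niettriv} and used as a black box (only the corollary on Azumaya lifts is argued in the text). So there is nothing to compare your proposal against except Platonov's original argument, and your sketch does follow that route --- the congruence filtration $U^{(i)}=1+\Pcal_D^{\,i}$, the graded analysis of $\Nrd$ and of commutators, and the successive-approximation limit are exactly Platonov's ingredients.

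As a sketch it is sound, but a couple of points would need tightening before it stands as a full proof. First, the computation of the class of $[1+t,x]$ in $U^{(i)}/U^{(i+1)}$ is only literally $\overline{\pi_D^{-i}t}-\bar x\,\overline{\pi_D^{-i}t}\,\bar x^{-1}$ in the unramified case; in general conjugation by $\pi_D$ acts nontrivially on $\Dbar$, and the twist by $\bar\sigma$ you invoke later already appears here, so the ``additive commutators'' you hit are $\bar\sigma$-twisted ones. Second, the identification of the $\Nrd$-induced map on graded pieces with a reduced-trace map (up to scalar) requires keeping track of the ramification index and of the center $Z(\Dbar)$, which need not equal $k$; your parenthetical ``the wildly ramified case reduces to the separable one'' hides real work. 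For the application in this paper only the unramified (Azumaya) situation is needed, and there your argument goes through cleanly, as you note.
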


Le but est  d'en d\'eduire l'isomorphisme entre  $\SKb_1(A)(k)$ et $\SKb_1(B_K)(K)$.  Bien \'evidemment,
on utilise les th\'eor\`emes consacr\'es de Wedderburn et de Morita \cite[Thm. 2.1.3, Lem. 2.8.6]{gilleszam}.
\begin{corr} \label{corr:sk1iso}
Soient $A,B,k,R$ et $K$ comme dessus, alors 
\[ \SKb_1(A)(k) \cong \SKb_1(B_K)(K). \]
\end{corr}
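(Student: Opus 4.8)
On veut déduire l'isomorphisme $\SKb_1(A)(k)\cong\SKb_1(B_K)(K)$ du cas des algèbres à division (Théorème \ref{thm:platsksl}). Le pont naturel est la théorie de Wedderburn–Morita : $A$ et $B_K$ s'écrivent sous forme matricielle sur des algèbres à division, et $\SK_1$ est un invariant de Morita. La seule vraie subtilité est de vérifier que l'algèbre à division résiduelle de la composante de $B_K$ \emph{est} la composante à division de $A$, c'est-à-dire la compatibilité entre le relèvement de Brauer $P(R)\to P(k)$ et le passage à la partie à division.

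**Plan de la preuve.** D'abord, j'écris $B$ sous la forme $B\cong M_r(\Delta)$ où $\Delta$ est une $R$-algèbre d'Azumaya qui est un anneau local (l'unique « composante à division » sur l'anneau local $R$) ; cela provient de ce que $\Br(R)\cong\Br(k)$ et du relèvement de l'idempotent décomposant $A\cong M_r(D)$ de $k$ à $R$ par complétude (les idempotents se relèvent le long de $R\to k$, $R$ étant complet donc hensélien). En réduisant modulo $\frakm_R$, on obtient $A=B\otimes_R k\cong M_r(\Delta\otimes_R k)$, donc $\Delta\otimes_R k\cong D$ est l'algèbre à division de $A$ ; en particulier $\ind_k(\overline{A})=\deg_k(D)=\deg_k(\Delta\otimes_R k)=\deg_R(\Delta)$. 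En inversant $v$, on a $B_K\cong M_r(\Delta_K)$ avec $\Delta_K:=\Delta\otimes_R K$ ; comme $K$ est de caractéristique $0$ (ou du moins $n$-sans-problème) et $[\Delta_K]$ a le même indice, $\Delta_K$ est encore une $K$-algèbre à division de même degré. Ensuite, j'identifie l'algèbre à division résiduelle $\overline{\Delta_K}$ de $\Delta_K$ au sens de la valuation prolongée $w$ : puisque $\Delta$ est déjà un $\Ocal_v$-ordre maximal (étant d'Azumaya sur $R$), on a $\Ocal_{\Delta_K}=\Delta$ et donc $\overline{\Delta_K}=\Delta/\frakm_R\Delta=\Delta\otimes_R k\cong D$. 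Enfin j'applique le Théorème \ref{thm:platsksl} à $\Delta_K$ : $\SKb_1(\Delta_K)(K)\cong\SKb_1(\overline{\Delta_K})(k)=\SKb_1(D)(k)$, puis l'invariance de Morita (\cite[Lem. 2.8.6]{gilleszam}) donne $\SKb_1(B_K)(K)=\SKb_1(\Delta_K)(K)$ et $\SKb_1(A)(k)=\SKb_1(D)(k)$, d'où la conclusion en composant.

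**Points techniques et principal obstacle.** Le point le plus délicat — et celui qui mérite un argument soigné — est l'égalité $\Ocal_{\Delta_K}=\Delta$, c'est-à-dire que l'ordre d'Azumaya $\Delta$ coïncide avec l'anneau de valuation de $w=\frac1m v\circ\Nrd_{\Delta_K/K}$ ; cela découle de ce qu'une $R$-algèbre d'Azumaya sur un anneau de valuation discrète complet est un ordre maximal, et qu'un ordre maximal dans une algèbre à division sur un corps complet est \emph{l'unique} ordre maximal, à savoir l'anneau de valuation (Serre, \cite[Ch. XII]{serrecorloc}). Il faut aussi vérifier que $\overline{\Delta_K}$ est bien une algèbre à division (pas seulement une algèbre simple), ce qui est automatique ici puisque $\Delta\otimes_R k\cong D$. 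Le reste — relèvement d'idempotents, fonctorialité de Morita pour $\SK_1$, préservation de l'indice et du degré sous le relèvement de Brauer — est standard et référencé dans l'extrait. Notons au passage que cette identification établit aussi l'assertion $\ind_k(\overline{A})=n$ invoquée au \S\ref{sec:exmodcyc}, ce qui justifie la promesse « voir démonstration du Corollaire \ref{corr:sk1iso} ».
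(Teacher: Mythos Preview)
Your proof is correct and follows essentially the same approach as the paper (Wedderburn + Morita invariance of $\SK_1$ + Platonov's Th\'eor\`eme~\ref{thm:platsksl}). The only cosmetic difference is the direction: the paper writes $B_K=M_m(D)$ first and deduces $A\cong M_m(\Dbar)$ via the injectivity of $\Br(R)\to\Br(K)$, whereas you decompose $B\cong M_r(\Delta)$ over $R$ by lifting idempotents and then identify $\Ocal_{\Delta_K}=\Delta$ via the maximal-order argument --- both routes produce the same chain of isomorphisms.
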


\begin{proof} 
Selon le th\'eor\`eme de Wedderburn, $B_K=M_m(D)$ pour une $K$-alg\`ebre \`a division $D$ et $m>0$.  D\^u \`a l'injectivit\'e
de $\Br(R) \to \Br(K)$, on sait que $M_m(\Ocal_D)$ est Brauer-\'equivalent \`a $B$.  Donc 
de nouveau d\^u \`a Wedderburn, $A=M_{m}(\Dbar)$.  Alors, le Th\'eor\`eme \ref{thm:platsksl}
et le th\'eor\`eme de Morita garantissent que
\[ \SKb_1(B_K)(K) \cong \SKb_1(D)(K) \cong \SKb_1(\Dbar)(k) \cong \SKb_1(A)(k). \]
\end{proof}

\begin{rem}
Il faut noter aussi que cet isomorphisme est fonctoriel dans le sens suivant.  Soit $K'$  une extension de corps de $K$ qui est \'egalement  complet et de valuation discr\`ete $v'$ tel que $v'$ prolonge $v$ et  soit $k'$ le corps r\'esiduel de $K'$.  Alors, l'isomorphisme  commute avec l'extension de base de $K$ \`a $K'$ et de $k$ \`a $k'$.
On n'a quand m\^eme pas d'\'equivalence de foncteurs, puisque il n'y a pas de bijection entre les extensions de $k$ et celles de $K$.
\end{rem}

\subsubsection{Anneaux de Cohen} \label{sec:cohen}

Les anneaux de Cohen -- parfois aussi appel\'es des \textit{$p$-anneaux}-- fournissent 
une m\'ethode assez canonique de relever des corps de caract\'eristique positive \`a des
anneaux  de caract\'eristique 0.
Nous commen\c{c}ons par la d\'efinition d'un anneau de Cohen.

\begin{defin}
Un \textit{anneau de Cohen} est un anneau complet de valuation discr\`ete  dont le
corps r\'esiduel est de caract\'eristique $p>0$, et dont l'id\'eal maximal est engendr\'e
par $p$.  Pour un premier $p$ fix\'e, on parle des $p$-anneaux de Cohen.
\end{defin}

Schoeller donne une construction explicite de ces anneaux \cite[\S 3]{schoeller}.  Ils sont
des sous-anneaux des anneaux de Witt. Dans le cas o\`u le corps r\'esiduel est parfait,
ils sont exactement les anneaux de Witt.  En g\'en\'eral, l'anneau de Cohen contient l'anneau de Witt de son sous-corps parfait
maximal.  \`A noter aussi que les anneaux de Cohen sont de caract\'eristique 0.
Nous rappelons le r\'esultat fondamental sur ces anneaux.

\begin{thm}[{\cite{cohen}, voir aussi \cite[Thm. 19.8.6]{ega4}}]\ \label{thm:cohen}
\begin{enumerate}[(i)]
\item \label{thm:coheni} Soient $W$ un anneau de Cohen, $C$ un anneau local noeth\'erien complet et $I$ un id\'eal
de $C$ distinct de $C$.  Alors tout homomorphisme local $u:W\to C/I$ se factorise en 
$W \overset{v}{\to} C \to C/I$, o\`u $v$ est un homomorphisme local.
\item Soit $k$ un corps de caract\'eristique $p>0$.  Il existe un anneau de Cohen $W$ dont le corps r\'esiduel est isomorphe
\`a $k$.   Si $W'$ est un second anneau de Cohen, $k'$ son corps r\'esiduel, tout isomorphisme 
$u:k\to k'$ provient par passage au quotient d'un isomorphisme $v:W\to W'$.
\end{enumerate}
\end{thm}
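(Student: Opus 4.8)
Il s'agit du th\'eor\`eme de structure de Cohen (voir les r\'ef\'erences cit\'ees ci-dessus); j'en esquisse seulement la preuve.

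\emph{\'Enonc\'e (i).} Le plan est un d\'evissage en deux temps ramenant l'assertion au c{\oe}ur du sujet, la lissit\'e formelle de $W$ sur $\mathbb{Z}$. Comme $C$ est complet, $C=\varprojlim_n C/\mathfrak{m}_C^n$, et $C/I$ \'etant lui aussi local noeth\'erien complet, $C/I=\varprojlim_n C/(I+\mathfrak{m}_C^n)$; ainsi $u$ est la limite projective des morphismes locaux $u_n\colon W\to C/(I+\mathfrak{m}_C^n)$, et il suffit de construire un syst\`eme compatible de rel\`evements locaux $v_n\colon W\to C/\mathfrak{m}_C^n$ des $u_n$, car alors $v:=\varprojlim_n v_n$ convient. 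On les construit par r\'ecurrence, en partant de $v_1=u_1\colon W\to C/\mathfrak{m}_C$. Le passage de $n$ \`a $n+1$ revient \`a relever un morphisme local \`a valeurs dans le produit fibr\'e $(C/\mathfrak{m}_C^n)\times_{C/(I+\mathfrak{m}_C^n)}C/(I+\mathfrak{m}_C^{n+1})$ le long de la surjection naturelle issue de $C/\mathfrak{m}_C^{n+1}$, dont le noyau, contenu dans $\mathfrak{m}_C^n/\mathfrak{m}_C^{n+1}$, est un id\'eal de carr\'e nul. Tout se ram\`ene donc \`a l'assertion suivante: pour toute extension de carr\'e nul $0\to N\to C'\to C''\to 0$ d'anneaux artiniens locaux de caract\'eristique r\'esiduelle $p$, tout morphisme local $f\colon W\to C''$ se rel\`eve en un morphisme local $W\to C'$.

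\emph{Le rel\`evement de carr\'e nul} est le point central. Posons $\kappa=W/pW$ et soit $\kappa_0=\bigcap_m \kappa^{p^m}$ son plus grand sous-corps parfait. Comme $\kappa_0$ est parfait, l'anneau de Witt $W(\kappa_0)$ est formellement lisse sur $\mathbb{Z}$ (il est $p$-adiquement complet, sans $p$-torsion, non ramifi\'e \`a corps r\'esiduel parfait) et se plonge canoniquement dans $W$; la restriction $W(\kappa_0)\to C''$ de $f$ se rel\`eve donc en un morphisme local $W(\kappa_0)\to C'$. Il reste \`a prolonger ce rel\`evement \`a $W$. On choisit pour cela une $p$-base $(b_\lambda)_\lambda$ de $\kappa$ sur $\mathbb{F}_p$ et des relev\'es $\beta_\lambda\in W^\times$ des $b_\lambda$: $N$ \'etant de carr\'e nul, on envoie chaque $\beta_\lambda$ sur un rel\`evement \emph{arbitraire} de $f(\beta_\lambda)\in C''$, et l'on v\'erifie que ceci d\'efinit bien un morphisme d'anneaux $W\to C'$ relevant $f$. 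Rendre cette v\'erification rigoureuse est le contenu effectif du th\'eor\`eme de Cohen: l'obstruction \`a l'existence d'un tel rel\`evement est un $2$-cocycle de Hochschild de $W$ \`a valeurs dans le $W$-module $N$ (de fa\c{c}on \'equivalente, une classe dans un groupe de cohomologie d'Andr\'e--Quillen), et sa nullit\'e r\'esulte en derni\`ere instance de ce que $\mathbb{F}_p$ est parfait, de sorte que $\kappa/\mathbb{F}_p$ est s\'eparable, donc formellement lisse --- la $p$-base relev\'ee jouant le r\^ole d'un syst\`eme de coordonn\'ees lisses.

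\emph{\'Enonc\'e (ii).} L'existence d'un anneau de Cohen $W$ de corps r\'esiduel $k$ prescrit r\'esulte de la construction explicite de Schoeller \cite[\S 3]{schoeller} (on y produit, comme sous-anneau d'un anneau de Witt, un anneau de valuation discr\`ete complet dont l'id\'eal maximal est engendr\'e par $p$ et dont le corps r\'esiduel est $k$). Pour la fonctorialit\'e, soient $W,W'$ deux anneaux de Cohen de corps r\'esiduels $k,k'$ et $u\colon k\to k'$ un isomorphisme. On applique (i) avec $C=W'$, $I=pW'$ et le morphisme local compos\'e $W\twoheadrightarrow W/pW=k\xrightarrow{u}k'=W'/pW'$: on obtient un morphisme local $v\colon W\to W'$ relevant $u$. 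Comme $v$ fixe $p$ et induit $u$ modulo $p$, il est surjectif (lemme de Nakayama $p$-adique, $W'$ \'etant complet), et son noyau --- un id\'eal du corps de valuation discr\`ete $W$ --- est nul, car sinon $W'\cong W/p^mW$ avec $m\geq 1$ aurait $p$ nilpotent, contredisant l'int\'egrit\'e de $W'$. Donc $v$ est un isomorphisme relevant $u$.

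\emph{Principal obstacle.} C'est le rel\`evement de carr\'e nul: la difficult\'e tient \`a ce que le corps r\'esiduel peut \^etre imparfait --- on ne peut donc pas se contenter des anneaux de Witt --- et la latitude dans le choix d'un anneau de Cohen (refl\'et\'ee par la libert\'e dans les images des $\beta_\lambda$) doit \^etre organis\'ee avec soin.
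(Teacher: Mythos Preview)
Le papier ne d\'emontre pas ce th\'eor\`eme : il se contente de l'\'enoncer en renvoyant \`a \cite{cohen} et \cite[Thm.~19.8.6]{ega4}. Il n'y a donc pas de preuve \og maison\fg{} \`a laquelle comparer la v\^otre.

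Votre esquisse suit fid\`element la d\'emonstration classique (celle d'EGA~IV) : d\'evissage par compl\'etion vers des extensions de carr\'e nul, puis rel\`evement gr\^ace \`a la lissit\'e formelle de $W$ sur $\Zb_{(p)}$, elle-m\^eme cons\'equence de la s\'eparabilit\'e automatique de $\kappa/\Fb_p$ (puisque $\Fb_p$ est parfait) exprim\'ee concr\`etement via une $p$-base. La d\'eduction de (ii) \`a partir de (i), avec Nakayama complet pour la surjectivit\'e et l'int\'egrit\'e de $W'$ pour l'injectivit\'e, est \'egalement standard et correcte. Deux d\'etails mineurs : vous \'ecrivez \og corps de valuation discr\`ete\fg{} l\`a o\`u il faut \og anneau\fg{}, et l'introduction de $W(\kappa_0)$ est superflue --- la lissit\'e formelle de $W/\Zb_{(p)}$ suffit directement, sans passer par le sous-corps parfait maximal --- mais cela n'affecte pas la validit\'e de l'argument.
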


\begin{rem}
Remarquons que la propri\'et\'e \eqref{thm:coheni} induit que les anneaux de Cohen sont des objets
initiaux dans la cat\'egorie des anneaux locaux avec corps r\'esiduel fixe.
Ce th\'eor\`eme semble donc sugg\'erer qu'il existe une propri\'et\'e universelle des anneaux de Cohen.
Malheureusement, les morphismes induits ne sont pas uniques en g\'en\'eral.  Il ne le sont
que si $k$ est parfait (avec $k$ le corps r\'esiduel de $W$) \cite[Rem.  21.5.3]{ega4}.  Donc par  manque d'unicit\'e, on appelle cette propri\'et\'e universelle amput\'ee
une propri\'et\'e \textit{verselle}, comme le fait Serre \cite[\S 5]{cohinv}.
Heureusement cet handicap se gu\'erit au niveau de la cohomologie.
\end{rem}

\begin{corr} \label{corr:cohencoh}
Soient $W,W'$ des anneaux de Cohen tel que 
le corps r\'esiduel $k'$ de $W'$ est une extension de $k$, le corps r\'esiduel de $W$.  Notons $u:k\to k'$ l'inclusion.
Le Th\'eor\`eme \ref{thm:cohen} (\ref{thm:coheni}) fournit un homomorphisme local
$v:W\to W'$.   Soient $A$ une $k$-alg\`ebre simple centrale et $B$ la $W$-alg\`ebre d'Azumaya relev\'ee.
Soient $K=\Frac(W)$ et $K'=\Frac(W')$, alors $v$ d\'efinit
pour tous entiers $i,n\geq 0$ un homomorphisme des suites exactes scind\'ees 
\[
\xymatrix{
0 \ar[r] & 
H^{i+1}_{n,A}(k) \ar[r] \ar[d]^{u_\ast} & 
H^{i+1}_{n,B_K}(K) \ar[r]^{\partial^i} \ar[d]^{v_\ast} &
H^i_{n,A} (k) \ar[r] \ar[d]^{u_\ast} &
0 \\
0 \ar[r] & 
H^{i+1}_{n,A}(k') \ar[r] & 
H^{i+1}_{n,B_K}(K') \ar[r]^{\partial^i} &
H^{i}_{n,A} (k') \ar[r] &
0.
}
\]
De plus, $v_\ast$ ne d\'epend pas du choix de $v$.  Si $k=k'$, on a un isomorphisme canonique $H^{i+1}_{n,B_K}(K)\cong H^{i+1}_{n,B_K}(K')$.
\end{corr}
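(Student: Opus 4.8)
The plan is to lift the inclusion $u\colon k\to k'$ to a local homomorphism $v\colon W\to W'$ of Cohen rings (Theorem~\ref{thm:cohen}~(\ref{thm:coheni})), transport the split exact sequence \eqref{eq:suitemod} along $v$, and then observe that the one choice made --- that of $v$ --- is invisible on cohomology, because every unital ring homomorphism fixes $p$ and the canonical uniformiser of a Cohen ring is precisely $p$. First I would check that $v$ is automatically injective: for any unital ring homomorphism $v(p)=p$, which is nonzero in $W'$ since it generates $\mathfrak{m}_{W'}$, so $p\notin\ker v$; as $W$ is a discrete valuation ring with primes $(0)$ and $(p)$ only, $\ker v=(0)$, and $v$ extends to an embedding $v\colon K\hookrightarrow K'$ along which $w'$ restricts to $w$ with ramification index $1$ and residual map $u$. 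Base change along $v$ carries the Azumaya $W$-algebra $B$ to $B\otimes_W W'$, whose residual algebra is $B\otimes_W k'=(B\otimes_W k)\otimes_k k'=A_{k'}$; by the bijectivity of $P(W')\to P(k')$ recalled in \S\ref{sec:relevbase}~\ref{sec:relevasc} this is the lift of $A_{k'}$, so the notation $H^{i+1}_{n,B_K}(K')$ stands for $H^{i+1}_{n,B_K\otimes_K K'}(K')$ with $B_K\otimes_K K'$ the central simple $K'$-algebra attached to $A_{k'}$, and $v_\ast[B_K]=[B_K\otimes_K K']$ in $\phantom{ }_n\Br(K')$. Hence $v$ induces $v_\ast\colon H^{i+1}_n(K)\to H^{i+1}_n(K')$ respecting cup product with the Brauer class, and therefore a well-defined $v_\ast\colon H^{i+1}_{n,B_K}(K)\to H^{i+1}_{n,B_K}(K')$; similarly $u$ induces the maps $u_\ast$ on the residual groups.

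Next I would check that both squares of the displayed diagram commute. After the Kummer identification \eqref{eq:kummer}, the inclusion in \eqref{eq:suitemod} is inflation along $\Gamma_K\twoheadrightarrow\Gamma_k$ (resp. $\Gamma_{K'}\twoheadrightarrow\Gamma_{k'}$); since $v$ induces $u$ on residue fields compatibly with these surjections, $v_\ast$ commutes with inflation, and this survives the passage to the quotients by $\cup[B_K]$ and $\cup[B_K\otimes_K K']$ because $v_\ast[B_K]=[B_K\otimes_K K']$, which gives the left-hand square. For the right-hand square one invokes functoriality of the residue for the extension $K\subset K'$ with $e=1$: writing an element as $\alpha_{i+1}+(p)\cup\alpha_i$ with $p$ the common uniformiser as in \eqref{eq:sesrv4}, one has $v_\ast\big(\alpha_{i+1}+(p)\cup\alpha_i\big)=v_\ast\alpha_{i+1}+(p)\cup v_\ast\alpha_i$, hence $\partial^{i}(v_\ast\alpha)=u_\ast\alpha_i=u_\ast(\partial^{i}\alpha)$; this descends to the relative groups via the commutative square \eqref{eq:diagmod} over $W$ and over $W'$. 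This proves the first assertion.

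For the remaining assertions, note that taking $\pi=p$ in \eqref{eq:sesrv4} gives a splitting depending only on the Cohen ring, so canonically $H^{i+1}_{n,B_K}(K)\cong H^{i+1}_{n,A}(k)\oplus H^{i}_{n,A}(k)$ and likewise over $K'$; under these identifications the computation of the preceding paragraph shows that $v_\ast$ becomes $u_\ast\oplus u_\ast$ (on the first summand inflation of $u_\ast$, on the second $(p)\cup\beta\mapsto(p)\cup(\text{inflation of }u_\ast\beta)$), the crucial point being that $v(p)=p$ for \emph{every} admissible $v$. Hence $v_\ast$ does not depend on the chosen lift $v$. If moreover $k=k'$, then $u=\id_k$ and Theorem~\ref{thm:cohen}~(ii) lets us take $v$ to be an isomorphism $W\to W'$, so $v_\ast$ is an isomorphism, and by the previous sentence it is the canonical one --- under the decompositions above it is just the identity. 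I expect the bookkeeping (compatibility of inflation and of the residue maps with the homomorphisms induced by $v$) to be routine given the explicit descriptions \eqref{eq:sesrv4} and \eqref{eq:diagmod}; the one step needing genuine care is precisely the observation that the canonical uniformisers of $W$ and $W'$ both coincide with $p$ and are hence preserved by \emph{any} $v$, together with the uniqueness of the lift $B\otimes_W W'$ --- it is these two facts that force $v_\ast$, and thus the whole comparison, to be independent of all choices.
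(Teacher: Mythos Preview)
Your proof is correct and follows the same approach as the paper: both arguments hinge on the observation that any ring homomorphism $v\colon W\to W'$ sends the canonical uniformiser $p$ to $p$, so that under the splitting of \eqref{eq:suitemod} by cup-product with the class of $p$ the map $v_\ast$ becomes $u_\ast\oplus u_\ast$ and is therefore independent of $v$. The paper's proof is a terse two-line version of exactly this reasoning; your additional verifications (injectivity of $v$, compatibility of the Azumaya lift, commutativity of each square) are the details it leaves implicit.
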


\begin{proof}
Soient $k,k'$ de caract\'eristique $p$, l'homomorphisme local $v$ envoie l'uniformisante $p\in W$
sur $p\in W'$. 
Donc, le diagramme et l'ind\'ependance du choix de $v$ suivent directement du scindage de la suite \eqref{eq:suitemod} par le cup-produit avec la classe de $p$.
Si $u$ est un isomorphisme, $v$ l'est aussi par le Th\'eor\`eme \ref{thm:cohen} \eqref{thm:coheni}, et on retrouve
un isomorphisme de suites exactes.
\end{proof}

Les anneaux de Cohen fournissent donc  au niveau de ces groupes de cohomologie une m\'ethode canonique pour passer de la caract\'eristique $p$ \`a la caract\'eristique 0,
et vice versa.  Pour faciliter la notation, introduisons une notion de triplets.

\begin{defin}
Soit $F$ un corps de valuation discr\`ete $v$,
alors on dit que $(F,\Ocal_v,\kappa(v))$ est un \textit{triplet de valuation}.  Un triplet de valuation 
$(K,R,k)$ o\`u $R$ est un $p$-anneau de Cohen (pour un premier $p>0$) est appel\'e un  
\textit{$p$-triplet de Cohen}.  Une \textit{extension de Cohen} (finie, resp. s\'eparable, resp. galoisienne) $(K',R',k')$ 
de $(K,R,k)$ est la donn\'ee d'un $p$-triplet de Cohen tel que $k'$ est une extension (finie, resp. s\'eparable, resp. galoisienne) de $k$.  
\end{defin}

\begin{rem}
Si cela ne pr\^ete pas \`a confusion, on parle librement d'un triplet de Cohen sans mentionner la caract\'eristique.
\'Etant donn\'e un corps $k$ de $\car(k)=p>0$, le Th\'eor\`eme \ref{thm:cohen} fournit un triplet de Cohen $(K,R,k)$ 
(non unique) \textit{associ\'e \`a $k$}.  De plus, si $(K',R',k')$ est une extension de Cohen 
(finie, resp. s\'eparable, resp. galoisienne) de $(K,R,k)$, le Th\'eor\`eme \ref{thm:cohen} induit que 
$K'$ est une extension (finie, resp. non-ramifi\'ee, resp. galoisienne) de $K$ -- voir aussi \cite[\S III.5]{serrecorloc}.
Si $(K,R,k)$ est un triplet de Cohen, $F$ un $R$-corps et $(F,\Ocal_v,\kappa(v))$ un triplet de valuation, 
on dit que $(F,\Ocal_v,\kappa(v))$ est \textit{un triplet de valuation sur $R$}.  Dans ce cas, $\kappa(v)$ est aussi un
$R$-corps et il y a trois possibilit\'es: ou bien $F$ et $\kappa(v)$ sont des extensions de $K$, ou bien ils sont toutes les deux des extensions de $k$, ou bien $F$ est une extension de $K$ et $\kappa(v)$ est une extension
de $k$.
\end{rem}

\subsection{Le rel\`evement}

On a maintenant fait les pr\'eparations pour relever l'invariant de Suslin en caract\'eristique mod\'er\'ee.  

\begin{thm} \label{thm:modere}
Soient $k$ un corps de $\car(k)=p>0$ et $A$ une $k$-alg\`ebre simple centrale de $\ind_k(A)=n$ premier \`a $p$.  Soit $(K,R,k)$ un triplet de Cohen associ\'e \`a $k$,  
$B$ la $R$-alg\`ebre d'Azumaya relev\'ee et $\rho'\in \Inv^4(\SKb_1(B_K),\Hcal_{n,\Bcal_K}^\ast)$.
Alors, il existe un unique $\rho \in \Inv^4(\SKb_1(A),\Hcal_{n,\Acal}^\ast)$, que l'on appelle \textit{l'invariant sp\'ecialis\'e de $\rho'$},
tel que pour toute extension de Cohen $(K',R',k')$ de $(K,R,k)$
le diagramme suivant commute:
\begin{equation} \label{diag:inv}
\xymatrix{ 
\SKb_1(A)(k')  \ar[r]^{\ \ \rho_{k'}} & H^4_{n,A}(k') \ar[d] \\
\ar[u]_{\cong} \SKb_1(B_K)(K') \ar[r]_{\ \ \ \rho'_{K'}} 
  & H^4_{n,B_K} (K'). 
}
\end{equation}
\end{thm}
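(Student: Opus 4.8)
The strategy is exactly the one outlined in \S\ref{sec:strategie}, steps (i) and (ii). We use the residue map of the cycle module $\Hcal_{n,\Acal}^\ast$ to produce an auxiliary degree-$3$ invariant, then show it vanishes, so that the degree-$4$ invariant $\rho'$ descends uniquely through the split exact sequence \eqref{eq:suitemod}.

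\textit{Construction of $\rho$.} Fix a Cohen extension $(K',R',k')$ of $(K,R,k)$ with $k'/k$ separable; by the remark following Theorem~\ref{thm:cohen}, $K'/K$ is then unramified, so the valuation $w$ on $K'$ extends $v$ and has residue field $k'$. Corollary~\ref{corr:sk1iso} (and its functoriality remark) gives an isomorphism $\SKb_1(B_K)(K')\cong\SKb_1(A)(k')$, compatible with base change along $k\hookrightarrow k'$. Composing $\rho'_{K'}\colon\SKb_1(B_K)(K')\to H^4_{n,B_K}(K')$ with the residue $\partial^{4}_{w,B_K}\colon H^4_{n,B_K}(K')\to H^3_{n,A}(k')$ from \eqref{eq:suitemod} yields a map $\rho''_{k'}\colon\SKb_1(A)(k')\to H^3_{n,A}(k')$. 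The first task is to check $\rho''$ is a genuine invariant of $\SKb_1(A)$ over $k$-fields, i.e.\ functorial in $k'$. The subtlety, flagged in \S\ref{sec:strategie}(i), is that the Cohen ring $R'$ over $k'$ is not canonical; but Corollary~\ref{corr:cohencoh} says that for a separable extension $k'/k$ the induced map $v_\ast\colon H^{i+1}_{n,B_K}(K)\to H^{i+1}_{n,B_K}(K')$ is independent of the choice of the local homomorphism $v\colon R\to R'$, and these maps are compatible with residues (the $p$-uniformizer is preserved, so the splitting by cup-product with the class of $p$ is respected). Hence $\rho''$ is well defined and functorial on separable extensions of $k$; one then extends to all $k$-fields by the standard limit/transfer argument (every field extension factors through the separable closure, and cycle-module data behave well under such limits). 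This gives $\rho''\in\Inv^3(\SKb_1(A),\Hcal_{n,\Acal}^\ast)$.

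\textit{Vanishing of $\rho''$, and descent.} By Lemma~\ref{prop:sksl} it suffices to show $\tilde\pi(\rho'')\in\Inv^3(\SLb_1(A),\Hcal_{n,\Acal}^\ast)$ is zero, and by Merkurjev's injection $\theta$ of \eqref{eq:invarmerk} it suffices to show $\theta(\tilde\pi(\rho''))=0$ in $A^0(\SLb_1(A),\Hcal^3_{n,\Acal})$, in fact in the reduced subgroup $\tilde A^0$. Here I would use that the corresponding statement is known in characteristic $0$: Suslin's invariant $\rho'$ in degree $4$ is the \emph{top} nonzero piece, and the degree-$3$ residue invariant $\rho''$ measures an obstruction that is already trivial for $\SLb_1(B_K)$ over $K$-fields by Suslin's results (the image of $\SKb_1$ lies in the kernel of $H^4_n(F)\to H^4_n(F(X))$ for $X$ the Severi–Brauer variety, and this unramified-type condition forces the residue to vanish). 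Transporting this across the isomorphism of Corollary~\ref{corr:sk1iso} and using that $\theta$ is computed on the generic point, one gets $\theta(\tilde\pi(\rho''))=0$, hence $\rho''=0$. Once $\rho''=0$, for every $k$-field $k'$ the element $\rho'_{K'}(x)$ (for $x\in\SKb_1(A)(k')\cong\SKb_1(B_K)(K')$) lies in the kernel of $\partial^4_{w,B_K}$, which by the split exact sequence \eqref{eq:suitemod} is the image of $H^4_{n,A}(k')\hookrightarrow H^4_{n,B_K}(K')$; define $\rho_{k'}(x)$ to be the unique preimage. Functoriality of $\rho$ follows from functoriality of \eqref{eq:suitemod} together with Corollary~\ref{corr:cohencoh}, and the diagram \eqref{diag:inv} commutes by construction. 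Uniqueness is immediate: \eqref{diag:inv} together with the injectivity of $H^4_{n,A}(k')\to H^4_{n,B_K}(K')$ determines $\rho_{k'}$ on the image of $\SKb_1(B_K)(K')$, which is all of $\SKb_1(A)(k')$.

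\textit{Main obstacle.} The routine points are the bookkeeping with Cohen extensions and the descent through the split sequence. The real content is the vanishing $\theta(\tilde\pi(\rho''))=0$: one must argue that the degree-$3$ residue of Suslin's degree-$4$ invariant is identically zero, i.e.\ that Suslin's invariant is genuinely unramified as a class in $H^4_{n,B_K}$ over any valued $K$-field of the relevant type. I expect this to reduce to the characteristic-$0$ properties of $\rho_{\mathrm{Sus}}$ recalled at the end of \S\ref{sec:def} (its values lie in the kernel of restriction to the Severi–Brauer function field), combined with Merkurjev's description of $\tilde A^0$ via multiplicative elements; making this precise, in particular checking the compatibility of Suslin's generic construction with the residue maps of the cycle module, is the step that requires care.
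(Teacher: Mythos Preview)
Your construction of the auxiliary invariant $\rho''$ and the descent through the split exact sequence \eqref{eq:suitemod} are correct and match the paper. But the vanishing step has a genuine gap.

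First, a minor point: the detour through separable extensions is unnecessary. Corollary~\ref{corr:cohencoh} applies to \emph{any} extension $k'/k$, with no separability hypothesis; the independence from the choice of local homomorphism $v:R\to R'$ holds because $p$ is always the uniformizer. So $\rho''$ is directly functorial on all of $\kcorps$, and the limit/transfer argument you sketch (which would not obviously handle purely inseparable extensions anyway) is not needed.

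The real problem is your argument for $\theta(\tilde\pi(\rho''))=0$. You invoke properties specific to Suslin's invariant (the Severi--Brauer kernel condition), but the theorem is stated for an \emph{arbitrary} $\rho'\in\Inv^4(\SKb_1(B_K),\Hcal_{n,\Bcal_K}^\ast)$, so any argument that leans on the particular construction of $\rho_{\mathrm{Sus}}$ cannot prove the theorem as stated. Moreover, even for Suslin's invariant, the kernel condition $H^4_n(F)\to H^4_n(F(X))$ concerns restriction to a function field, not a discrete-valuation residue; there is no direct mechanism by which ``unramified with respect to the Severi--Brauer variety'' forces $\partial^4_{w,B_K}$ to vanish, so the step ``this unramified-type condition forces the residue to vanish'' is not justified.

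The paper closes this gap by a structural result that has nothing to do with $\rho'$: Corollary~\ref{corr:invnulmod} shows that $\Inv^3(\SLb_1(A),\Hcal_{n,\Acal}^\ast)=0$ for \emph{every} simply connected semisimple group, by reducing (via a diagram chase using $\Pic(\SLb_1(A))=0$) to Merkurjev's Proposition~\ref{prop:merkurjev} that $\tilde A^0(\GB,\Hcal_n^3)=0$. Once you have this, $\tilde\pi(\rho'')$ lies in a group that is identically zero, and you are done --- for any $\rho'$, with no appeal to Suslin's construction.
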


\begin{rem}
Les modules de cycles $\Hcal_{n,\Bcal_K}^\ast:=(H^{j}_{n,B_K})_{j\geq 2}$ de base $K$ et $\Hcal_{n,\Acal}^\ast:=(H^{j}_{n,A})_{j\geq 2}$ de base $k$ sont les modules de cycles \'evidents.  Il sont les modules de cycles restreints du module de cycles $\Hcal_{n,\Bcal_K}^\ast$
de base $R$ respectivement \`a $K$ et \`a $k$ (selon \S \ref{sec:def} \ref{sec:coexistence}).
Le morphisme $H^4_{n,A}(k') \to H^4_{n,B_K} (K')$
est donc l'injection de la suite exacte \eqref{eq:suitemod}.
\end{rem}

Commen\c{c}ons par le deuxi\`eme pas de la strat\'egie g\'en\'erale expliqu\'ee dans \ref{sec:strategie}.  On utilise  la proposition suivante qui est une donn\'ee importante.

\begin{prop}[Merkurjev {\cite[Lem. 4.8 et Prop. 4.9]{invalggroup}}] \label{prop:merkurjev}
Soient $k$ un corps et $\GB$ un $k$-groupe semi-simple simplement connexe, alors $ \tilde{A}^0(\GB,\Hcal_n^3) =0$ pour tout $n\in k^\times$.  En particulier (d'apr\`es \S \ref{sec:lieninv} \ref{sec:lienmerk}), $\Inv^3(\GB,\Hcal_n^\ast)=0$.
\end{prop}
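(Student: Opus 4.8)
The statement to prove is Merkurjev's vanishing $\tilde{A}^0(\GB,\Hcal^3_n)=0$ for a semisimple simply connected $\GB$ over $k$ and any $n\in k^\times$, from which $\Inv^3(\GB,\Hcal^\ast_n)=0$ follows by the injection $\theta$ of \S\ref{sec:lieninv} \ref{sec:lienmerk} together with the inclusion $A^0(\GB,M_j)_{\mathrm{mult}}\subset\tilde{A}^0(\GB,M_j)$. So the whole content is the first assertion, and the plan is to reduce it, via standard structure theory of the cycle module $\Hcal^\ast_n$, to known low-degree computations on split simply connected groups. First I would recall what $\tilde{A}^0(\GB,\Hcal^3_n)$ is: it is the reduced subgroup of $A^0(\GB,\Hcal^3_n)$, i.e. the kernel of $u^\ast$ for the unit section $u:\eenB\to\GB$, where $A^0(\GB,\Hcal^3_n)=H^0$ of the Gersten complex of the cycle module $\Hcal^3_n$ on $\GB$. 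Concretely an element is an unramified class in $H^3_n(k(\GB))$ killed by restriction to the generic point of $\eenB$, i.e. an unramified class vanishing at the identity.

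\textbf{Main steps.} The plan is the following. (1) Reduce to the case where $k$ is separably closed, or at least deal with the Galois descent: the cohomology of $\GB_{k_s}$ carries an action of $\Gamma_k$, and $\tilde{A}^0$ over $k$ sits in a descent spectral sequence whose relevant terms are $H^p(k,\tilde{A}^0(\GB_{k_s},\Hcal^{3-p}_n))$; the lower-weight pieces $\tilde{A}^0(\GB_{k_s},\Hcal^{i}_n)$ for $i\le 2$ are controlled by the Picard group and the group of invertible functions of $\GB$. (2) For $\GB$ split simply connected over a separably closed field, invoke the key geometric inputs: $\GB$ is a rational variety, $k[\GB]^\times=k^\times$ (no nonconstant invertible functions, since $\GB$ has no nontrivial characters — this is where "simply connected" first enters) and $\Pic(\GB)=0$ (again because the character group of a maximal torus equals the weight lattice for simply connected groups, so there are no nontrivial line bundles coming from the Bruhat cell decomposition). (3) Use the homotopy invariance / localization properties of cycle modules (Rost, \cite[\S 8,9]{Rostmodcyc}) to compute $A^0(\GB,\Hcal^3_n)$: since $\GB$ is built out of affine cells (big Bruhat cell $\cong\Ab^N$ and its complement stratified by smaller cells), a Mayer–Vietoris / localization argument expresses $A^0(\GB,\Hcal^3_n)$ in terms of $A^0$ and $A^1$ of affine spaces over $k$ with coefficients in $\Hcal^{\ast}_n$, hence in terms of $H^3_n(k)$, $H^2_n(k)$, etc. (4) The reduced part $\tilde{A}^0$ strips off exactly the constant term $H^3_n(k)=A^0(\spec k,\Hcal^3_n)$, and the remaining contributions are shown to vanish because the only potential source, $\Pic(\GB)$-twisted classes of the form $H^2_n(k)\cup(\text{class in }\Pic)$, dies since $\Pic(\GB)=0$. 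Assemble: $\tilde{A}^0(\GB,\Hcal^3_n)=0$. Then $A^0(\GB,\Hcal^3_n)_{\mathrm{mult}}\subset\tilde{A}^0(\GB,\Hcal^3_n)=0$, and by injectivity of $\theta$ one gets $\Inv^3(\GB,\Hcal^\ast_n)=0$.

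\textbf{Where the difficulty lies.} The genuine obstacle is the cell-decomposition computation in step (3): making precise how the Bruhat stratification of a simply connected semisimple group interacts with the Gersten complex of $\Hcal^\ast_n$, and checking that all the residue maps assemble so that $A^0$ receives no new classes beyond the constants plus a $\Pic$-contribution. This is exactly the content of Merkurjev's \cite[Lem.~4.8 and Prop.~4.9]{invalggroup}, and rather than reprove it from scratch I would cite it: the cycle module $\Hcal^\ast_n=(H^{i+1}_n)$ is, in every degree and over every $R$-field, literally the Galois-cohomology cycle module to which Merkurjev's arguments apply verbatim (they only use the axioms D1–D4 together with $n\in k^\times$, the rationality of $\GB$, and the computations $k[\GB]^\times=k^\times$, $\Pic(\GB)=0$, all of which hold here). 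Thus the proof is essentially a verification that the hypotheses of Merkurjev's lemma are met by our $\Hcal^\ast_n$ and a transcription of his conclusion; the second sentence of the statement is then immediate from $\theta$ being injective with image $A^0(\GB,\Hcal^3_n)_{\mathrm{mult}}\subset\tilde A^0(\GB,\Hcal^3_n)$.
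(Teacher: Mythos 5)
Votre démarche coïncide en substance avec celle du texte : l'énoncé est attribué à Merkurjev et le papier ne le redémontre pas, il cite exactement \cite[Lem.~4.8 et Prop.~4.9]{invalggroup} pour $\tilde{A}^0(\GB,\Hcal_n^3)=0$, la seconde assertion découlant, comme vous le dites, de l'injectivité de $\theta$ et de l'inclusion $A^0(\GB,M_j)_{\text{mult}}\subset\tilde{A}^0(\GB,M_j)$ rappelées en \S\ref{sec:lieninv} \ref{sec:lienmerk}. Votre esquisse préliminaire (décomposition cellulaire, $\Pic(\GB)=0$, etc.) n'est qu'un habillage heuristique dont la validité importe peu puisque vous concluez, comme le papier, par la citation du résultat de Merkurjev.
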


Nous nous permettons d'en d\'eduire un r\'esultat auxiliaire \`a base de quelques arguments homologiques.

\begin{corr}\label{corr:invnulmod}
Soient $k$ un corps, $\GB$ un $k$-groupe semi-simple simplement connexe, $A$ une $k$-alg\`ebre simple centrale tel que $\ind_k(A)=n\in k^\times$, alors
$ \Inv^3(\GB,\Hcal_{n,\Acal}^\ast)=0$.
\end{corr}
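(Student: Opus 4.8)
L'idée est de comparer le module de cycles relatif $\Hcal_{n,\Acal}^\ast$ au module de cycles absolu $\Hcal_n^\ast$ grâce à la suite exacte
\[ 0 \to H^{i-1}(F,\mu_n^{\otimes i-1}) \overset{\cup [A]}{\to} H^{i+1}_n(F) \to H^{i+1}_{n,A}(F) \to 0 \]
qui résulte de la définition du quotient relatif au \S\ref{sec:relatifmod} (au moins une fois la conjecture de Bloch--Kato admise, ou plus élémentairement via la définition \eqref{eq:defaltmod}). Comme ces suites sont fonctorielles en le $R$-corps $F$ (ou plutôt ici en le $k$-corps $F$) et compatibles aux résidus, elles fournissent une suite exacte courte de modules de cycles de base $k$. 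En appliquant le foncteur exact à gauche $\rho \mapsto \rho$ des transformations naturelles depuis $\GB$ (plus précisément en utilisant que $\Inv^j(\GB,-)$ est exact à gauche, ce qui est immédiat puisqu'un invariant n'est qu'une transformation naturelle de foncteurs en groupes abéliens), on obtient une suite exacte
\[ \Inv^3(\GB,\Hcal_{n}^{\ast}[-2]) \to \Inv^3(\GB,\Hcal_n^\ast) \to \Inv^3(\GB,\Hcal_{n,\Acal}^\ast) \to \Inv^4(\GB,\Hcal_{n}^{\ast}[-2]), \]
où $\Hcal_n^\ast[-2]$ désigne le décalage du module de cycles galoisien de sorte que le terme en degré $i$ soit $H^{i-1}(F,\mu_n^{\otimes i-1})$ (c'est à nouveau, à réindexation près, le module de cycles $\Hcal_n^\ast$).

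Le point clé est alors le suivant : chacun des deux groupes encadrant $\Inv^3(\GB,\Hcal_{n,\Acal}^\ast)$ est un groupe $\Inv^{j}(\GB,\Hcal_n^\ast)$ pour un $j\in\{3,4\}$, modulo le décalage de degré. Or la Proposition \ref{prop:merkurjev} donne précisément $\Inv^3(\GB,\Hcal_n^\ast)=0$, et par le lien de Merkurjev \eqref{eq:invarmerk}, $\Inv^4(\GB,\Hcal_n^\ast)$ s'injecte dans $\tilde A^0(\GB,\Hcal_n^4)$. Il faut donc vérifier que le terme de droite $\Inv^4(\GB,\Hcal_n^\ast[-2])$ s'annule aussi, ou au moins qu'il ne contribue pas ; c'est ici, je pense, que réside la principale difficulté. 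Le décalage fait que le terme de degré $4$ du module $\Hcal_n^\ast[-2]$ est $H^{3}(F,\mu_n^{\otimes 3})$, donc on retombe encore sur $\Inv^4(\GB,\Hcal_n^\ast)$ réindexé, qui n'est pas nul en général. On contourne cela en étant plus attentif à la réindexation : l'image de $\Inv^3(\GB,\Hcal_{n,\Acal}^\ast)$ dans $\Inv^4(\GB,\Hcal_{n}^{\ast}[-2])$ tombe en réalité dans le sous-groupe provenant de la cohomologie \emph{non ramifiée} (sous-groupe réduit $\tilde A^0$), et l'on utilise que l'image d'un invariant de $\SKb_1$ est tuée par spécialisation à l'unité, combiné au scindage $A^0(\GB,M_j)\cong\tilde A^0(\GB,M_j)\oplus A^0(k,M_j)$ du \S\ref{sec:lienmerk}. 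En fin de compte, le cœur de l'argument est que $\tilde A^0(\GB,\Hcal_n^3)=0$ (Proposition \ref{prop:merkurjev}) propage l'annulation à travers la suite exacte longue, le terme de droite étant neutralisé par le fait que les classes considérées sont des cobords de classes de degré $\le 2$ tuées par $\cup[A]$, donc nulles dès que $\tilde A^0$ en degré $3$ est trivial.

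**Plan alternatif, plus direct.** Si l'argument homologique ci-dessus s'avère délicat à cause des décalages, je procéderais plutôt ainsi : soit $\rho\in\Inv^3(\GB,\Hcal_{n,\Acal}^\ast)$. Par le lien de Merkurjev appliqué au module de cycles $\Hcal_{n,\Acal}^\ast$ de base $k$, $\rho$ correspond à un élément multiplicatif, donc réduit, de $A^0(\GB,\Hcal_{n,\Acal}^3)$. Il suffit donc de démontrer $\tilde A^0(\GB,\Hcal_{n,\Acal}^3)=0$. On relève alors un tel élément en un élément de $A^0(\GB,\Hcal_n^3)$ via la surjection de complexes de Gersten induite par $H^{i+1}_n\twoheadrightarrow H^{i+1}_{n,A}$ (surjection qui, étant un morphisme de modules de cycles, induit un morphisme de complexes de Gersten et donc en homologie $A^0(\GB,\Hcal_n^3)\to A^0(\GB,\Hcal_{n,\Acal}^3)$) : le relèvement existe sur le complexe de Gersten car on peut remonter degré par degré, l'obstruction vivant dans un $A^1$ d'un module de la forme $H^{i-1}(-,\mu_n^{\otimes i-1})$, que l'on contrôle. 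On vérifie que l'élément relevé est encore réduit (quitte à lui soustraire sa composante constante, licite par le scindage du \S\ref{sec:lienmerk}), donc appartient à $\tilde A^0(\GB,\Hcal_n^3)=0$ par la Proposition \ref{prop:merkurjev} ; par conséquent son image $\rho$ est nulle. L'obstacle principal, dans cette approche, est de justifier proprement la surjectivité $A^0(\GB,\Hcal_n^3)\to A^0(\GB,\Hcal_{n,\Acal}^3)$, c'est-à-dire de relever un cycle de Gersten à valeurs dans $\Hcal_{n,\Acal}$ en un cycle à valeurs dans $\Hcal_n$ : cela revient à annuler un $A^1(\GB, \Hcal_n^\ast[-2])$, ce qui découle de ce que $\GB$ est rationnel (un groupe semi-simple simplement connexe est une $k$-variété rationnelle) et donc que ses groupes de cohomologie non ramifiée en bas degré sont triviaux, ou plus directement de la suite exacte \eqref{eq:suitemod} couplée à \ref{prop:merkurjev}.
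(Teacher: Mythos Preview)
Your second plan is on the right track and is essentially the route the paper takes, but both plans contain genuine errors or gaps that you would have to fix.

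\textbf{First plan.} The short exact sequence you write down,
\[ 0 \to H^{i-1}(F,\mu_n^{\otimes i-1}) \overset{\cup [A]}{\longrightarrow} H^{i+1}_n(F) \to H^{i+1}_{n,A}(F) \to 0, \]
is not exact on the left: the cup-product with $[A]$ has no reason to be injective (think of the extreme case $[A]=0$, or more generally of classes killed by $[A]$). What the definition gives you is only
\[ H^{i-1}(F,\mu_n^{\otimes i-1}) \overset{\cup [A]}{\longrightarrow} H^{i+1}_n(F) \to H^{i+1}_{n,A}(F) \to 0. \]
So you do not have a short exact sequence of cycle modules, and the long exact sequence of $\Inv^j(\GB,-)$ you write does not exist as stated. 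Bloch--Kato does not help here.

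\textbf{Second plan.} Reducing to $\tilde A^0(\GB,\Hcal_{n,\Acal}^3)=0$ via Merkurjev and then chasing through the surjection $\Hcal_n^3 \twoheadrightarrow \Hcal_{n,\Acal}^3$ is exactly what the paper does. The gap is your justification of the lifting step. You claim the obstruction vanishes because ``un groupe semi-simple simplement connexe est une $k$-vari\'et\'e rationnelle'': this is false over a general field $k$ (it is only true over $k_s$), so that argument does not work. The correct input, which the paper uses, is the much weaker statement $\Pic(\GB)=0$ for $\GB$ semi-simple simply connected (Sansuc). Concretely, the paper writes the three-row diagram
\[
\xymatrix{
H^1(k,\mu_n) \ar[r] \ar[d]^{\cup[A]} & H^1(k(\GB),\mu_n) \ar[r]^-{\partial^1} \ar[d]^{\cup[A]} & \bigoplus_{x\in\GB^{(1)}} \Zb/n\Zb \ar[d]^{\cup[A]} \\
H^3_n(k) \ar[r] \ar[d] & H^3_n(k(\GB)) \ar[r]^-{\partial^3} \ar[d] & \bigoplus_{x\in\GB^{(1)}} H^2_n(k(x)) \ar[d] \\
H^3_{n,A}(k) \ar[r] & H^3_{n,A}(k(\GB)) \ar[r]^-{\partial^3_A} & \bigoplus_{x\in\GB^{(1)}} H^2_{n,A}(k(x)),
}
\]
with exact columns (by definition of the relative groups) and exact middle row (Proposition~\ref{prop:merkurjev}). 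The top-right map $\partial^1$ is the divisor map modulo $n$, and it is \emph{surjective} precisely because $\Pic(\GB)=0$. A straightforward diagram chase then gives exactness of the bottom row at the middle term, i.e.\ $\tilde A^0(\GB,\Hcal_{n,\Acal}^3)=0$. This surjectivity of $\partial^1$ is the missing ingredient in your plan; once you name it, your second approach and the paper's coincide.
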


\begin{proof}
Selon \S \ref{sec:lieninv} \ref{sec:lienmerk}, il suffit de d\'emontrer la trivialit\'e de 
$ \tilde{A}^0(\GB,\Hcal_{n,\Acal}^3)$.
Tout d'abord on consid\`ere le diagramme commutatif
\begin{equation} \label{diag:grand}
\xymatrix{
H^1(k, \mu_n) \ar[r] \ar[d]^{\cup [A]}
& H^1(k(\GB), \mu_n) \ar[r]^-{\partial^1} \ar[d]^{\cup [A_{k(\GB)}]}
& \bigoplus_{x\in \GB^{(1)}} H^0(k(x), \Zb/n\Zb)  \ar[d]^{\oplus_{x\in \GB^{(1)}} \cup [A_{k(x)}]}
\\
 H^3_n(k) \ar[r] \ar[d]
& H^3_n(k(\GB)) \ar[r]^-{\partial^{3}} \ar[d]
& \bigoplus_{x\in \GB^{(1)}} H^2_n(k(x)) \ar[d]
\\ H^3_{n,A}(k) \ar[r] 
& H^3_{n,A}(k(\GB)) \ar[r]^-{\partial^{3}_{A}}
& \bigoplus_{x\in \GB^{(1)}} H^2_{n,A}(k(x)) ,
}
\end{equation}
dont les lignes sont des complexes de cha\^ines, et dont la ligne centrale est exacte vu la Proposition \ref{prop:merkurjev} et  l'isomorphisme \eqref{eq:invarmerk}.
Il suffit de d\'emontrer l'exactitude de la ligne d'en bas.  
Le th\'eorie de Kummer et les propri\'et\'es des r\'esidus \cite[Rem. 6.2]{cohinv}
indiquent que $\partial^1$, qui est une somme des r\'esidus, est en r\'ealit\'e le morphisme diviseur:
\[ k(\GB)^\times/(k(\GB)^\times)^n \to \bigoplus_{x\in \GB^{(1)}} \Zb/n\Zb = \Div(\GB)/n\Div(\GB): \overline{f} \mapsto \overline{\divi(f)}.  \] 
Il est surjectif puisque $\Pic(\GB)=0$ \cite[Lem. 6.9]{sansuc}.
 Se basant sur la surjectivit\'e 
de $\partial^1$, on d\'emontre alors l'exactitude du complexe en bas avec une chasse au diagramme.
\end{proof}

\begin{proof}[D\'emonstration du Th\'eor\`eme \ref{thm:modere}.]
 Soit $\rho'\in \Inv^4(\SKb_1(B_K),\Hcal_{n,\Bcal_K}^\ast)$. 
Afin de construire $\rho \in \Inv^4(\SKb_1(A),\Hcal_{n,\Acal}^\ast)$,
il faut commencer par d\'efinir $\rho_{k'}:\SKb_1(A)(k') \to H^4_{n,A}(k')$ pour toute extension
de corps $k'$ de $k$,
et puis prouver la fonctorialit\'e en le corps.  
Soit donc $(K',R',k')$ une extension de Cohen associ\'ee \`a $k'$.  
On a donc $\rho'_{K'}: \SKb_1(B_{K})(K') \to H^{4}_{n,B_K}(K')$.  Soit $\pi$ l'isomorphisme $\SKb_1(B_{K'})(K')\cong \SKb_1(A)(k)$,
alors on d\'efinit \[ \tilde{\rho}_{k'} := \partial^4 \circ \rho_{K'} \circ \pi^{-1}: \SKb_1(A)(k') \to H^{3}_{n,A}(k').\]  
\`A noter aussi que ce proc\'ed\'e ne d\'epend pas du
choix de l'extension de Cohen.  Car si $(K'',R'',k')$ est une autre extension de Cohen associ\'ee \`a $k'$, le Corollaire \ref{corr:cohencoh} garantit un isomorphisme des suites exactes scind\'ees de type (\ref{eq:suitemod}) avec l'identit\'e au facteurs $H^{4}_{n,A}(k')$ et $H^{3}_{n,A}(k')$.  De plus $\rho'_{K'}$ et $\pi$ sont fonctoriels pour de telles extensions de corps, 
donc ce proc\'ed\'e construit bien un invariant $\tilde{\rho}\in \Inv^3(\SKb_1(A),\Hcal^\ast_{n,\Acal})$.  

Le Corollaire \ref{corr:invnulmod} et la Propri\'et\'e \ref{prop:sksl} disent alors que
$\tilde{\rho}=0$.  C'est-\`a-dire si $a\in \SKb_1(A)(k')$, on sait que 
$\rho'_{k'} (a)$ provient d'un \'el\'ement unique de $H^4_{n,A}(k')$ (par la suite exacte \eqref{eq:suitemod}).  
Donc on a un morphisme $\rho_{k'}:\SKb_1(A)(k')\to H^4_{n,A}(k')$.  
De nouveau puisque, la suite \eqref{eq:suitemod}
est fonctorielle, et le choix de l'anneau de Cohen n'a pas d'influence sur la d\'efinition, ceci d\'efinit un invariant $\rho\in \Inv^4(\SKb_1(A),\Hcal_{n,\Acal}^\ast)$.  

Le diagramme commutatif \eqref{diag:inv} suit de la construction, et l'unicit\'e suit imm\'ediatement
de l'injectivit\'e de $H^4_{n,A}(k') \to H^4_{n,B_K} (K')$.
\end{proof}

\begin{rem}
Pour un corps $k$ de $\car(k)=p>0$ et $A$ une $k$-alg\`ebre simple centrale de $\ind_k(A)\in k^\times$, Suslin a d\'ej\`a defini un invariant $\rho_{\text{Sus},A}$.  N\'eanmoins, si $(K,R,k)$ est un $p$-triplet de Cohen, $B$ la $R$-alg\`ebre d'Azumaya relev\'ee et
$\rho_{\text{Sus},B_K}$ l'invariant de Suslin de $B_K$, l'invariant sp\'ecialis\'e de $B_K$ est quand m\^eme le 
m\^eme invariant que $\rho_{\text{Sus},A}$.  En effet, on peut v\'erifier 
que l'invariant de Suslin satisfait cette propri\'et\'e de rel\`evement (i.e. il existe un diagramme comme 
\eqref{diag:inv} avec $\rho=\rho_{\text{Sus},A}$ et $\rho'=\rho_{\text{Sus},B_K}$).
\end{rem}

\section{Le rel\`evement, le cas sauvage} \label{sec:sauvage}

Soient $k$ un corps de caract\'eristique $p>0$ et $A$ une $k$-alg\`ebre simple centrale d'indice \'eventuellement divisible par $p$.  
On se trouve maintenant dans un monde inconnu, puisque le module de cycles $\Hcal^\ast_{n,\Acal}$
n'est plus adapt\'e \`a nos buts.  En effet, parce que $\mu_{p^n}(k_s)$ est trivial, 
les groupes de cohomologie galoisienne $H^{j+1}(k,\mu_{p^n}^{\otimes j})$ sont triviaux, et 
la suite exacte de Kummer \eqref{eq:kummer} ne donne plus d'isomorphisme de $H^2(k,\mu_{p^n})$
avec $\phantom{ }_{p^n}\Br(k)$.   

Dans cette section, on d\'ecrit de nouveaux groupes de cohomologie (introduits par Kato \cite{katogalcoh}) qui fournissent
aussi un isomorphisme avec $\phantom{ }_{p^n}\Br(k)$ (dont on a besoin afin de pouvoir d\'efinir des modules de cycles relatifs comme dans \S \ref{sec:cohomdef} \ref{sec:relatifmod}). Ceci donne assez d'ingr\'edients pour
effectuer le rel\`evement.  Dans une premi\`ere \'etape, nous effectuons le travail pour une alg\`ebre simple
centrale d'indice $p^n$.  Le rel\`evement pour le cas g\'en\'eral peut en \^etre d\'eduit utilisant
la d\'ecomposition d'une alg\`ebre \`a division en parties premi\`eres (\S \ref{sec:relevgen}). 

\subsection{Groupes de cohomologie} \label{sec:sauvagecoh}

On utilise des groupes de cohomologie des \textit{diff\'erentielles logarithmiques} de Kato (ibid.).  
Ils sont pourvus d'une suite exacte analogue \`a la suite \eqref{eq:suitemod} dont on
a besoin afin d'\'etablir le rel\`evement.  
Soient donc dans toute cette section $(K,R,k)$ un $p$-triplet de Cohen et $F$ un $R$-corps.  
Rappelons la notion de diff\'erentielles logarithmiques et la d\'efinition, ainsi que quelques propri\'etes des groupes $H^{q+1}_{p^n}(k)$ 
(pour des entiers $n,q\geq 0$)\footnote{L'indice $q+1$ est de nouveau d\^u \`a la tradition 
facilitant la notation de quelques r\'esultats.}.

\subsubsection{Diff\'erentielles logarithmiques} \label{sec:difflog}  
La d\'efinition de $H^{q+1}_{p^n}(k)$ est le plus claire pour $n=1$ et explique la terminologie.
Soient $\Omega_{k}^q:=\bigwedge \Omega_{k/\mathbb{Z}}^1$ et $d: \Omega_{k}^{q-1} \to \Omega_{k}^{q}$ la d\'erivation ext\'erieure 
usuelle (pour $q=0$, on pose $d=0$).
Alors, $H^{q+1}_p(k)$ est d\'efini comme le conoyau du \textit{morphisme de Cartier}
\[ F-1:\Omega_{k}^q \to \Omega_{k}^q/d\Omega_{k}^{q-1},\ \  \text{ defini par }\ \ x \frac{dy_1}{y_1} \wedge \ldots \wedge 
 \frac{dy_q}{y_q} \mapsto (x^p-x) \frac{dy_1}{y_1} \wedge \ldots \wedge 
 \frac{dy_q}{y_q} \mod d\Omega_{k}^{q-1},
\]
avec $x\in k$, $y_1,\ldots,y_q \in k^\times$ et $F(x)=x^p$ \cite[Ch. 2, \S 6]{cartier}.  Le noyau est not\'e traditionellement  $\nu_1(q)_k$.

\subsubsection{G\'en\'eralisation} \label{sec:diffloggen} On peut g\'en\'eraliser la d\'efinition de $H^{q+1}_{p}(k)$ 
\`a une d\'efinition de $H^{q+1}_{p^n}(k)$.
  Soit 
\begin{equation*}
D_{p^n}^q(k)= W_n(k)\otimes \underset{q \text{ fois}}{\underbrace{k^\times \otimes \ldots \otimes k^\times}}, 
\end{equation*}
avec $W_n(k)$ le groupe des $p$-vecteurs de Witt de longueur $n$ sur $k$.  Soit
$J'_{q}(k)$ le sous-groupe de $D_{p^n}^q(k)$ engendr\'e par tous les \'el\'ements de la forme
\vspace{3mm}
\begin{enumerate}[(i)]
 \item $w\otimes b_1 \otimes \ldots \otimes b_{q}$, satisfiant $b_i=b_j$ pour $1\leq i < j \leq q$. \label{it:J1}
\end{enumerate}
\vspace{3mm}
Le groupe $C_{p^n}^q(k)=D_{p^n}^q(k)/J'_q(k)$ est alors muni d'une d\'erivation $d:C_{p^{n}}^{q-1} (k) \to C_{p^{n}}^{q} (k)$: pour $a\in k$, $b_2,\ldots, b_q\in k^\times$ est $q>0$  d\'efinie par
\[ (0,\ldots,0,a,0,\ldots,0) \otimes b_2 \otimes \otimes \ldots b_{q} \mapsto 
(0,\ldots,0,a,0,\ldots,0)\otimes a \otimes b_2 \otimes \ldots \otimes b_{q}. 
 \]
Pour $q=0$, on pose de nouveau $d=0$. Le groupe de cohomologie $H^{q+1}_{p^n}(k)$ est  alors
d\'efini comme le conoyau du morphisme de Cartier 
\begin{eqnarray*} F-1: \ C_{p^n}^q(k) &\to& C_{p^n}^q(k)/dC_{p^{n}}^{q-1} (k), \quad \text{ d\'efini par } \\  w\otimes b_1 \otimes \ldots \otimes b_{q} &\mapsto &(w^{(p)}-w) \otimes b_1 \otimes \ldots \otimes b_{q}.
\end{eqnarray*}
Ici, $F(w)=w^{(p)}=(a_1^p,\ldots ,a_{n}^p)$ si $w=(a_1,\ldots,a_{n})$.  
On note $\nu_n(q)_k$ le noyau du morphisme de Cartier.
Alors, $H^{q+1}_{p^n}(k)\cong D_{p^n}^q(k)/J_q(k)$ o\`u $J_q(k)$ \cite[Proof of Prop. 2]{katogalcoh} est le sous-groupe de $D_{p^n}^q(k)$ engendr\'e par
les \'el\'ements de la forme \eqref{it:J1} et 
\vspace{3mm}
\begin{enumerate}[(i)] \setcounter{enumi}{1}
 \item $(0,\ldots,0,a,0,\ldots,0)\otimes a \otimes b_2 \otimes \ldots \otimes b_{q}$, \label{it:J2}
\item $(w^{(p)}-w) \otimes b_1 \otimes \ldots \otimes b_{q}$. \label{it:J3}
\end{enumerate}
\vspace{3mm}
De plus, on pose $H_{p^n}^{q+1}(k)=0$ pour $q<0$.  Cette construction est donc bien une g\'en\'eralisation de la d\'efinition
pour $n=1$ en termes de diff\'erentielles logarithmiques par l'identification 
\[ x \frac{dy_1}{y_1} \wedge \ldots \wedge 
 \frac{dy_q}{y_q} \quad \leftrightarrow \quad  x \otimes y_1 \otimes \ldots \otimes y_q,\]
avec $x\in k$ et $y_i\in k^\times$.  \`A noter que l'antisym\'etrie vaut pour cette g\'en\'eralisation, puisque $w\otimes b_1 b_2 \otimes b_1 b_2 \otimes \ldots b_q = 0$ ($w\in W_n(k), b_i\in k^\times$).

Soit  $\dlog:k_s^{\times}\to \nu_n(1)_{k_s}:a\mapsto 1 \otimes a$.  
La suite exacte longue associ\'ee \`a la suite exacte 
\begin{equation} \label{eq:suitedlog} 
\xymatrix{1 \ar[r]& k_s^\times \ar[r]^{p^n} & k_s^\times \ar[r]^{\dlog \quad} & \nu_n(1)_{k_s} \ar[r] & 1} 
\end{equation}
des $\Gamma_k$-modules induit un isomorphisme sur la partie de $p^n$-torsion du groupe de Brauer:
$ H^1(k,\nu_n(1)_{k_s}) \cong \phantom{ }_{p^n}\Br(k)$ . 
L'exactitude de \eqref{eq:suitedlog} suit d'un calcul avec des vecteurs de Witt et des produits tensoriels \cite[Ch. 2, Prop. 8]{cartier}.
De plus, on a une suite exacte: 
\[ \xymatrix{ 0 \ar[r] & \nu_n(q)_{k_s} \ar[r] & C_{p^n}^q(k_s) \ar[r]^{F-1\qquad \ \ } & C_{p^n}^q(k_s)/dC_{p^n}^{q-1}(k_s) \ar[r] & 0.  }\]
La surjectivit\'e de $F-1$ suit du Th\'eor\`eme \ref{thm:katoizh} qui d\'emontre que $H^{q+1}_{p^n}(k_s)=0$ pour chaque $q\geq 0$ et $n>0$.
Puisque $C_{p^n}^q(k_s)$ est un $k_s$-espace
vectoriel tel que $C_{p^n}^q(k_s)^\Gamma=C_{p^n}^q(k)$, on en d\'eduit donc par le th\'eor\`eme de Hilbert 90 additif l'isomorphisme
\begin{equation} \label{eq:isonu}
H^1(k,\nu_n(q)_{k_s}) \cong H_{p^n}^{q+1}(k).
\end{equation}
Comme dans le cas mod\'er\'e on a donc
\begin{equation} \label{eq:isobrauersauv}
 H^2_{p^n}(k) \cong \phantom{ }_{p^n}\Br(k).
\end{equation}

\subsubsection{Suite exacte de Kato} \label{sec:suitekato}
Afin de pouvoir \'etablir le m\^eme canevas que dans le cas mod\'er\'e, il faut qu'on ait un module de cycles et
on aimerait aussi avoir une suite exacte comme \eqref{eq:suitemod}.   Malheureusement, la th\'eorie
est plus complexe, mais on a quand m\^eme le r\'esultat suivant.

\begin{thm}[Kato \cite{katogalcoh}, Izhboldin \cite{izboldhin}] \label{thm:katoizh}
Soit $F$ un $R$-corps complet de valuation discr\`ete $v$ avec triplet de valuation $(F,\Ocal_v,\kappa(v))$.  Soient $F_{\text{nr}}$ l'extension
maximale non-ramifi\'ee de $F$ et 
\[ H^{q+1}_{p^n,\text{nr}}(F) := \ker (H^{q+1}_{p^n}(F) \to H^{q+1}_{p^n}(F_{\text{nr}})). \]
Alors, on a une suite exacte scind\'ee
\begin{equation}  \label{eq:suitesauv}
0 \to H^{q+1}_{p^n} (\kappa(v)) \to H^{q+1}_{p^n,\text{nr}} (F) \to H^{q}_{p^n} (\kappa(v)) \to 0.
\end{equation}
\end{thm}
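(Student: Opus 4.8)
The plan is to dispatch the three possible shapes of a valuation triple $(F,\Ocal_v,\kappa(v))$ over $R$ listed in the remark following the definition of a Cohen extension, treating each in turn; only the last two carry real content, and they are exactly the theorems of Kato and Izhboldin quoted, whose proofs I would invoke rather than reprove. \emph{Tame case}: if $F$ and $\kappa(v)$ are both extensions of $K$, then $\car(F)=0$ and $p$ is a unit in $\Ocal_v$, so $H^{q+1}_{p^n}$ is here literally the group $H^{q+1}_m$ of \S\ref{sec:cohomdef} with $m=p^n$, and \eqref{eq:suitesauv} is nothing but the split exact sequence \eqref{eq:sesrv4}; there is nothing to do.

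\emph{Equal-characteristic case}: here $F$ and $E:=\kappa(v)$ are both extensions of $k$, so $\car(F)=p$ and $H^{q+1}_{p^n}$ is Kato's logarithmic differential group. The structural heart is the computation of the logarithmic de Rham--Witt groups of a complete equal-characteristic discrete valuation ring: for the unramified extension $F_{E'}$ of $F$ with residue field a finite separable $E'/E$, a choice of uniformizer $\pi\in F$ produces a natural decomposition $\nu_n(q)_{F_{E'}}\cong\nu_n(q)_{E'}\oplus\nu_n(q-1)_{E'}$ (the second summand obtained by wedging lifts with $\tfrac{d\pi}{\pi}$), hence, passing to the colimit over $E'$, a $\Gamma_E$-equivariant decomposition $\nu_n(q)_{F_{\text{nr}}}\cong\nu_n(q)_{E_s}\oplus\nu_n(q-1)_{E_s}$. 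Feeding this into the inflation--restriction sequence for $1\to\Gal(F_s/F_{\text{nr}})\to\Gamma_F\to\Gamma_E\to1$ acting on $\nu_n(q)_{F_s}$, together with the \'etale-cohomological description of $H^{q+1}_{p^n}$ --- whose use is legitimate here once one notes that the vanishing $H^{q+1}_{p^n}(L_s)=0$ over a separably closed $L_s$ is itself the degenerate ($F_{\text{nr}}=F$) case of the present statement --- identifies $H^{q+1}_{p^n,\text{nr}}(F)$ with $H^1(\Gamma_E,\nu_n(q)_{F_{\text{nr}}})\cong H^{q+1}_{p^n}(E)\oplus H^q_{p^n}(E)$, the direct sum furnishing the splitting. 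All of this is carried out by Kato \cite{katogalcoh}; the passage from $n=1$ (logarithmic differential forms in the literal sense) to arbitrary $n$ uses Izhboldin's results on the $p$-torsion of the relevant groups \cite{izboldhin}.

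\emph{Mixed-characteristic case}: here $F$ is an extension of $K$, so $\car(F)=0$ and $v(p)>0$, while $\kappa(v)$ is an extension of $k$; thus $H^{q+1}_{p^n}(F)=H^{q+1}(F,\mu_{p^n}^{\otimes q})$ is ordinary Galois cohomology, whereas $H^{q+1}_{p^n}(\kappa(v))$ is again a logarithmic differential group. This is precisely the setting of Kato's study of the Galois cohomology of complete discrete valuation fields \cite{katogalcoh}: the group $H^{q+1}(F,\mu_{p^n}^{\otimes q})$ carries Kato's valuation filtration whose bottom step is the unramified part $H^{q+1}_{p^n,\text{nr}}(F)$, and Kato computes its graded pieces in terms of the $\nu_n$-groups of the residue field, which yields the split exact sequence with the natural map from $\spec(\Ocal_v)$ on the left and Kato's residue symbol on the right; the refinements required for general $n$ are again due to Izhboldin \cite{izboldhin}. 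For this case I would simply cite these results.

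The main obstacle is thus wholly concentrated in the structural input for the last two cases --- the decomposition of $\nu_n(q)$, respectively the computation of the graded pieces of Kato's filtration on $H^{q+1}(F,\mu_{p^n}^{\otimes q})$, of a complete discrete valuation ring in terms of $\nu_n(q)\oplus\nu_n(q-1)$ of its residue field --- together with the attendant bootstrapping through the separably closed case that legitimises the \'etale-cohomological rewriting. Once that input is granted, the inflation--restriction bookkeeping, the colimit, the $\Gamma_E$-equivariance and the construction of the splitting are all routine; but since it is the deep part of the work of Kato and Izhboldin, in the write-up it is cited rather than reproved.
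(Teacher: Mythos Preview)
Your proposal is correct and follows essentially the same approach as the paper: a three-case split according to the characteristics of $F$ and $\kappa(v)$, with the equal-characteristic-$0$ case reducing to \eqref{eq:sesrv4} and the other two cases cited from Kato and Izhboldin. Two small points of alignment: in the equal-characteristic-$0$ case you should make explicit (as the paper does) that $H^{q+1}_{p^n}(F_{\text{nr}})=0$ so that $H^{q+1}_{p^n,\text{nr}}(F)=H^{q+1}_{p^n}(F)$, and the paper attributes the mixed-characteristic splitting to Kato and the equal-characteristic-$p$ splitting to Izhboldin, rather than both to Kato with Izhboldin handling only the passage to general $n$.
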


\begin{rem} \label{rem:ressauv}
Expliquons le scindage et les morphismes en jeu. Selon les caract\'eristiques de $F$ et $\kappa(v)$, on discute  trois situations.
\begin{itemize}
\item Dans le cas d'\textit{in\'egale caract\'eristique} ($\car(F)=0$ et $\car(\kappa(v))=p$), le scindage est obtenu par des morphismes
d\^us \`a Kato \cite[Proof of Prop. 2]{katogalcoh}.  D'une part, on a une inclusion
 $i^\ast:H^{q+1}_{p^n} (\kappa(v))  \to  H^{q+1}_{p^n,\text{nr}} (F)$ de degr\'e 0, d\'efinie par
\[ w \otimes \bar{b}_1 \otimes \ldots \otimes \bar{b}_{q} \mod J_q(\kappa(v))  \mapsto 
i(w) \cup h^{q}_{p^n,F}(b_1,\ldots, b_{q}) \] 
D'autre part, on a une inclusion
$\psi: H^{q}_{p^n} (\kappa(v))  \to  H^{q+1}_{p^n,\text{nr}} (F)$ de degr\'e 1, d\'efinie par
\[ w \otimes \bar{b}_2 \otimes \ldots \otimes \bar{b}_{q} \mod J_{q-1}(\kappa(v)) \mapsto 
i(w) \cup h^{q}_{p^n,F}(\pi,b_2,\ldots, b_{q}). \]
Ici, $w\in W_n(\kappa(v))$, $\pi$ est une uniformisante fixe de $F$, $b_i\in \Ocal_v^\times$, $h ^{q}_{p^n,F}$ est le
symbole galoisien \eqref{eq:symbgal} et $i$ est l'homomorphisme canonique 
\[ W_n(\kappa(v))/\{ w^{(p)}-w | w \in W_n(\kappa(v))\} \overset{\varphi}{\cong} H^1(\kappa(v),\mathbb{Z}/p^n\mathbb{Z}) \hookrightarrow H^1_{p^n}(F).\]
La derni\`ere injection ci-dessus est celle de la suite \eqref{eq:suitemod}, et l'isomorphisme $\varphi$ vient du th\'eor\`eme 90 d'Hilbert additif
appliqu\'e \`a la suite exacte de cohomologie associ\'ee \`a  la suite exacte de Witt \cite[\S 5]{witt}:
\[ \xymatrix{ 0 \ar[r] & \mathbb{Z}/p^n\mathbb{Z} \ar[r] & W_n(\kappa(v)_s) \ar[r]^{x^{(p)}-x} & W_n(\kappa(v)_s) \ar[r] & 0. }\]
Kato d\'emontre donc que $i^\ast \oplus \psi$ donne un isomorphisme
\[ H^{q+1}_{p^n} (\kappa(v)) \oplus H^{q}_{p^n} (\kappa(v)) \cong H^{q+1}_{p^n,\text{nr}} (F). \]
\item Le cas d'\textit{\'egale caract\'eristique $0$} ($\car (F)=\car (\kappa(v))=0$) est comme le cas mod\'er\'e.  En effet,
 $H^{q+1}_{p^n,\text{nr}}(F)=H^{q+1}_{p^n}(F)$, puisque selon la suite scind\'ee \eqref{eq:sesrv4}, on a \[ H^{q+1}_{p^n}(F_{\text{nr}})\cong H^{q+1}_{p^n}(\kappa(v)_s)\oplus H^{q+1}_{p^n}(\kappa(v)_s)=0.\]

\item \label{it:katoizb} Le cas d'\textit{\'egale caract\'eristique $p$} ($\car (F)=\car (\kappa(v))=p$) est d\'ecrit par 
Izhboldin \cite[Prop. 6.8]{izboldhin}.  Le morphisme $i^\ast : H^{q+1}_{p^n} (\kappa(v)) \to H^{q+1}_{p^n,\text{nr}} (F)$
est d\'efini par 
\[ \bar{w} \otimes \bar{b}_1 \otimes \ldots \otimes \bar{b}_{q} \mod J_q(\kappa(v))  \mapsto 
w \otimes b_1 \otimes \ldots \otimes b_{q} \mod J_q(F). \]
D'autre part il y a un morphisme 
$\psi : H^{q}_{p^n} (\kappa(v)) \to H^{q+1}_{p^n,\text{nr}} (F)$, d\'efini par 
\[ \bar{w} \otimes \bar{b}_2 \otimes \ldots \otimes \bar{b}_{q} \mod J_{q-1}(\kappa(v))  \mapsto 
w \otimes \pi \otimes b_2 \otimes \ldots \otimes b_{q} \mod J_q(F), \]
o\`u $\pi$ est de nouveau une uniformisante fixe de $F$, $b_i\in \Ocal_v^\times$, $w=(a_1,\ldots,a_n)\in W_n(\Ocal_v)$ et $\bar{w}=(\bar{a}_1,\ldots,\bar{a}_n)$ son
r\'esidu dans $W_n(\kappa(v))$.  Izhboldin d\'emontre que $i^\ast \oplus \psi$ induit un scindage de $H^{q+1}_{p^n,\text{nr}} (F)$.
\end{itemize}
\end{rem}

\subsubsection{D\'efinition du $R$-module de cycles $\Hcal_{p^n,L}^\ast$} \label{sec:sauvdef} Maintenant, on a assez d'information afin de d\'efinir notre module de cycles envisag\'e.
\begin{defin} \label{def:modcycsauv}
Soient $(K,R,k)$ un $p$-triplet de Cohen avec une extension de Cohen finie, galoisienne $(L,S,\Lbar)$ . 
Alors pour tout entier $n>0$, on d\'efinit $\Hcal_{p^n,L}^\ast=(\Hcal_{p^n,L}^{i}(F))_{i> 0}$ le module de
cycles de base $R$ avec 
\[ \Hcal_{p^n,L}^{j+1}(F)=H_{p^n,L}^{j+1}(F):=
\begin{cases}
  \ker (H^{j+1}_{p^n}(F) \to H^{j+1}_{p^n}(F\otimes_K L)) & \text{ si } F\in \Kcorps, \\
  \ker (H^{j+1}_{p^n}(F) \to H^{j+1}_{p^n}(F\otimes_k \Lbar)) & \text{ si } F\in \kcorps. 
 \end{cases}
\]
\end{defin}

\begin{rem}
\`A noter que pour  $F\in \Kcorps$ les groupes de cohomologie sont des groupes de cohomologie galoisienne
usuelles.
\end{rem}

\begin{rem}
Si $(L_1,S_1,\Lbar_1)$ et $(L_2,S_2,\Lbar_2)$ sont deux extensions de Cohen finies galosiennes de $(K,R,k)$, 
il existe une extension de Cohen finie galoisienne $(L,S,\Lbar)$ de $(K,R,k)$ tel qu'elle est
 une extension de Cohen commune de $(L_1,S_1,\Lbar_1)$ et $(L_2,S_2,\Lbar_2)$. 
Alors, ils existent des injections $\Hcal_{n,L_1}^{\ast}\to \Hcal_{n,L}^{\ast}$ et $\Hcal_{n,L_2}^{\ast}\to \Hcal_{p^n,L}^{\ast}$.  Le  choix de $L$ n'a donc pas  grande importance.
\end{rem}

Il faut d\'emontrer que cette d\'efinition d\'efinit bien un module de cycles. 
 Pour cela, on a 
tout d'abord besoin des quatre donn\'ees D1-D4 (voir \S \ref{sec:def} \ref{sec:defgenmodcyc}).  Les donn\'ees D1, D2 et D3
se d\'eroulent seulement en \'egales caract\'eristiques.  Pour la donn\'ee D4, on peut rencontrer des 
caract\'eristiques mixtes.

Par cette remarque, la fonctorialit\'e (D1) suit imm\'ediatement. 
Soit $E$ une
extension finie de $F$ de trace $\Tr_{E/F}$.  Alors,  
$E\otimes_F C_{p^n}^q(F)\cong C_{p^n}^q(E)$, et on d\'efinit une trace de $C_{p^n}^q(E)$ par la composition
\[ C_{p^n}^q(E) \cong  E\otimes_F C_{p^n}^q(F) \overset{\Tr_{E/F} \otimes \id}{\verylongrightarrow} 
F \otimes_F C_{p^n}^q(F) \cong C_{p^n}^q(F).
\]
Celle-ci s'\'etend alors \`a une d\'efintion pour la r\'eciprocit\'e (D2) aux groupes de cohomologie $H_{p^n,L}^{q+1}(F)$.

\subsubsection{Structure de $K_m(F)$-module (D3)} \label{sec:sauvkth}

Si $\car(F)=0$ (i.e. $F$ est une extension de $K$), la structure de $K_m(F)$-module 
est d\'efinie comme dans le cas mod\'er\'e.  Si $\car(F)=p$ (i.e. $F$ est une extension de $k$),
cette structure est inspir\'ee par le \textit{symbole diff\'erentiel} au lieu 
du symbole galoisien.  
Pour tout $m\geq 1$,
\[ \rho_F^m: K_m(F) \to \Omega_F^m, \quad \text{ d\'efini par }\quad  \{ x_1,\ldots,x_m\} \mapsto \frac{dx_1}{x_1}\wedge \ldots \wedge \frac{dx_m}{x_m},
\]
est un homomorphisme. En effet, $d(ab)=bd(a)+ad(b)$ implique $\frac{d(ab)}{ab}=\frac{da}{a}+\frac{db}{b}$,
et si $a+b=1$, on a $\frac{da}{a}\wedge \frac{db}{b}=0$, puisque $da+db=0$ ($a,b\in k^\times$).
Alors, $\rho_F^m$ induit une application $K_m(F)/pK_m(F) \to \Omega_F^m$ parce que $\car(F)=p$ (et donc $dx^p=0$).  
De plus, l'image est contenue dans $\nu_1(m)_F$.  Le symbole diff\'erentiel
est $h^m_{1,F}:K_m(F)/pK_m(F) \to \nu_1(m)_F$.  Le Th\'eor\`eme de Bloch-Kato-Gabber d\'emontre que c'est
un isomorphisme \cite[Thm. 2.1]{blochkato}.  

Inspir\'e par la d\'efinition du symbole diff\'erentiel, on peut proposer la structure de
$K_m(F)$-module comme  suit:
\begin{eqnarray*}
 \rho_{p^n,F}^m:K_m(F) \times H_{p^n}^{q+1}(K) &\to& H_{p^n}^{q+m+1}(F): \\ (\{ x_1, \ldots, x_m \} , w \otimes b_1 \otimes \ldots \otimes b_{q} ) &\mapsto& w  \otimes x_1 \otimes \ldots \otimes x_m \otimes b_1 \otimes \ldots \otimes b_{q}.
\end{eqnarray*}
Les m\^emes arguments que ci-dessus garantissent qu'elle est bien d\'efinie.  Pour $a\in K_m(F)$ et $b\in H_{p^n}^{q+1}(F)$,
on note la multiplication scalaire par $a\cdot b:=\rho^m_{p^n,F}(a,b)$.
Cette structure se restreint \`a une structure de $K_m(F)$-module sur $(H_{p^n,L}^{q+1}(F))_{q\geq 0}$ pour $(L,S,\Lbar)$
comme dans la D\'efinition \ref{def:modcycsauv}.  En effet, si $b\in J_q(F\otimes \Lbar)$, on a $a\cdot b \in J_{q+m}(F\otimes  \Lbar)$ pour tout $a\in K_m(F)$.

\subsubsection{Le r\'esidu et une suite exacte} \label{sec:sauvres}
Puis, il faut l'existence d'un r\'esidu (la donn\'ee D4), et on veut aussi g\'en\'eraliser la suite exacte \eqref{eq:suitemod}.  

\begin{prop} \label{prop:suitesauv}
Soient $(K,R,k)$ un $p$-triplet de Cohen et $(L,S,\Lbar)$ une extension de Cohen finie galoisienne.  Pour tout $R$-triplet
de valuation $(F,\Ocal_v,\kappa(v))$ et pour tous entiers $n>0$ et $q\geq 0$, on a une suite exacte scind\'ee
\begin{equation} \label{eq:suitesauvl}
 0 \to  H_{p^n,L}^{q+1}(\kappa(v)) \to H_{p^n,L}^{q+1}(F) \to H_{p^n,L}^q(\kappa(v)) \to 0.
\end{equation}
\end{prop}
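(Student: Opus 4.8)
The plan is to follow exactly the pattern by which the relative sequence \eqref{eq:suitemod} is obtained in the moderate case, using this time the Kato--Izhboldin sequence \eqref{eq:suitesauv} of Theorem \ref{thm:katoizh} in place of the non-relative sequence \eqref{eq:sesrv4}. First I would reduce to the case where $F$ is complete for $v$: a residue factors through the completion $\widehat{F}$, which has the same residue field $\kappa(v)$ and satisfies $\widehat{F}\otimes_K L=\widehat{F\otimes_K L}$ (resp. $\widehat{F}\otimes_k\Lbar=\widehat{F\otimes_k\Lbar}$), so constructing the residue D4 and checking exactness reduces to the complete setting, where Theorem \ref{thm:katoizh} applies directly. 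Recall from the remark on valuation triplets over $R$ that there are exactly three possibilities for $(\car(F),\car(\kappa(v)))$, namely $(0,0)$, $(p,p)$ and $(0,p)$, and these are precisely the three situations treated in Theorem \ref{thm:katoizh} and Remark \ref{rem:ressauv}; so in every case we have at our disposal the split exact sequence
\[ 0\to H^{q+1}_{p^n}(\kappa(v))\overset{i^\ast}{\to}H^{q+1}_{p^n,\text{nr}}(F)\overset{\partial}{\to}H^q_{p^n}(\kappa(v))\to 0 \]
together with the explicit splitting maps $i^\ast$ and $\psi$ described there.

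Next I would produce the $L$-relative version. Put $F':=F\otimes_K L$ if $F\in\Kcorps$ and $F':=F\otimes_k\Lbar$ if $F\in\kcorps$. Because $(L,S,\Lbar)$ is a Cohen extension (so $L/K$, equivalently $S/R$, is \'etale, equivalently $\Lbar/k$ is separable) and $\Ocal_v$ is an $R$-algebra, the base change of $\Ocal_v$ along $L/K$ (resp. $\Lbar/k$) is a finite \'etale $\Ocal_v$-algebra, hence a product of discrete valuation rings all unramified over $\Ocal_v$; in particular $F'$ is a finite product of separable field extensions of $F$, unramified over $v$, a fixed uniformizer $\pi$ of $F$ stays a uniformizer in each component of $F'$, and the residue ring of $F'$ is $\kappa(v)':=\kappa(v)\otimes_k\Lbar$. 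Applying Theorem \ref{thm:katoizh} to $F$ and to $F'$ componentwise (and using that $H^\ast_{p^n}$ turns a finite product of fields into the product of the corresponding groups) yields a commutative ladder with split exact rows
\[ \xymatrix{
0 \ar[r] & H^{q+1}_{p^n}(\kappa(v)) \ar[r]^{i^\ast} \ar[d] & H^{q+1}_{p^n,\text{nr}}(F) \ar[r]^-{\partial} \ar[d] & H^q_{p^n}(\kappa(v)) \ar[r] \ar[d] & 0 \\
0 \ar[r] & H^{q+1}_{p^n}(\kappa(v)') \ar[r]^{i^\ast} & H^{q+1}_{p^n,\text{nr}}(F') \ar[r]^-{\partial} & H^q_{p^n}(\kappa(v)') \ar[r] & 0
} \]
in which the vertical restriction maps are compatible with the splittings $i^\ast,\psi$, since those are built from the cup-product with $\pi$ (preserved under the base change) and the canonical maps on $\mathbb{Z}/p^n\mathbb{Z}$, on Witt vectors and on Milnor symbols, all of which are natural. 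A morphism of split short exact sequences respecting the splittings induces a split short exact sequence on kernels; identifying the kernels of the three vertical maps with $H^{q+1}_{p^n,L}(\kappa(v))$, with $H^{q+1}_{p^n,L}(F)$ (using that $H^{q+1}_{p^n,L}(F)\subseteq H^{q+1}_{p^n,\text{nr}}(F)$, which holds because each component of $F'$ embeds into $F_{\text{nr}}$), and with $H^q_{p^n,L}(\kappa(v))$, one obtains exactly the asserted split exact sequence \eqref{eq:suitesauvl}.

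The routine ingredients are that $H^\ast_{p^n}$ of a finite product of fields is the product and that restriction is functorial. The step I expect to be the genuine obstacle is checking that the splitting maps $i^\ast$ and $\psi$ of Remark \ref{rem:ressauv} are natural for the base change $F\to F'$, which has to be done separately for Kato's description in unequal characteristic and for Izhboldin's in equal characteristic $p$: one must run through the cocycle-level formulas (involving a Witt vector $w\in W_n(\Ocal_v)$ and its residue, a uniformizer $\pi$, and the symbol $h^q_{p^n,F}(\pi,b_2,\dots,b_q)$) and verify that Witt vectors over $\Ocal_v$ go to Witt vectors over the valuation ring of each component of $F'$ and that $\pi$ remains a uniformizer there --- which is exactly the point where the unramifiedness of $F'/F$, i.e. of the Cohen extension, is used. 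A secondary and easier check is that $H^{q+1}_{p^n,L}(F)$ equals, rather than merely is contained in, $\ker\bigl(H^{q+1}_{p^n,\text{nr}}(F)\to H^{q+1}_{p^n,\text{nr}}(F')\bigr)$; this follows once more from $F'/F$ being unramified at $v$ together with the sequence already established for $F'$.
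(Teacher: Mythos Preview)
Your proposal is correct and follows essentially the same approach as the paper: apply the Kato--Izhboldin split exact sequence \eqref{eq:suitesauv} to both $F$ and $F\otimes L$ (resp.\ $F\otimes_k\Lbar$), obtain a commutative ladder whose splittings are compatible with the vertical restriction maps, and take kernels. The paper compresses this into two sentences (the ladder diagram plus ``les scindages respectent ce diagramme commutatif, la suite exacte scind\'ee suit du lemme du serpent''), whereas you spell out the reduction to the complete case, the unramifiedness of $F'/F$, and the cocycle-level naturality of $i^\ast$ and $\psi$; but the underlying argument is the same.
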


\begin{proof}
On a certainement deux versions de la suite \eqref{eq:suitesauv}:
\[
\xymatrix{ 
0 \ar[r] &  H^{q+1}_{p^n} (\kappa(v))\ar[d] \ar[r] &  H^{q+1}_{p^n,\text{nr}} (F) \ar[d] \ar[r] &  H^{q}_{p^n} (\kappa(v)) \ar[r] \ar[d]& 0 \\
0 \ar[r]  &  H^{q+1}_{p^n} (\kappa(v)\otimes \Lbar) \ar[r]  &  H^{q+1}_{p^n,\text{nr}} (F\otimes L) \ar[r]  &  H^{q}_{p^n} (\kappa(v)\otimes \Lbar) \ar[r]   & 0. \\
}
\]
Puisque les suites sont scind\'ees et les scindages respectent ce diagramme commutatif, la suite exacte scind\'ee suit du lemme du serpent.
\end{proof}

%

\begin{rem} \label{rem:ressauvnoncom}
Les r\'esidus pour un $R$-corps $F$ complet de valuation discr\`ete $v$ sont donc d\'efinis par cette suite.  Si $F$ n'est pas
complet, on prend $\hat{F}$ un complet\'e par rapport \`a $v$.  Le corps r\'esiduel  de $\hat{F}$ est \'egal au corps 
r\'esiduel $\kappa(v)$ de $F$, et alors le r\'esidu est d\'efini par la composition
\[ H^{i+1}_{p^n,L}(F) \to H^{i+1}_{p^n,L}(\hat{F}) \to H^{i}_{p^n,L}(\kappa(v)). \]
\end{rem}

On a donc introduit les quatre donn\'ees des modules de cycles, et en plus on a une suite qui nous permet de faire la m\^eme construction que dans le cas mod\'er\'e.  Pour la v\'erification d\'etaill\'ee 
des r\`egles de module de cycles, nous renvoyons \`a l'Appendice \ref{sec:append}.

\subsubsection{Version relative}

Comme dans \S \ref{sec:cohomdef} \ref{sec:relatifmod}, nous d\'efinissons de nouveau des modules de cycles relatifs \`a base
de l'isomorphisme \eqref{eq:isobrauersauv} et de l'action de la $K$-th\'eorie -- comparable avec  la pr\'esentation \eqref{eq:defaltmod} du module de cycle relatif moder\'e.

\begin{defin} \label{def:modcycsauvrel}
Soient $(K,R,k)$ un $p$-triplet de Cohen,
$A$ une $K$-alg\`ebre simple centrale de $\ind_K(A)=p^n$ et $\overline{A}$ la $k$-alg\`ebre
simple centrale r\'esiduelle.  Soit $(L,S,\Lbar)$ une extension de Cohen finie galoisienne de $(K,R,k)$ tel
que $\Lbar$ est  un corps de d\'ecomposition de $\overline{A}$.  Soient 
$F \in \rcorps$ et $[A_F]$ la classe de $A_F:=A\otimes_R F$ dans $\phantom{}_{p^n}\Br(F)$, alors
on d\'efinit le module de cycles $\Hcal_{p^n,L,\mathcal{A}}^{\ast}=(H_{p^n,L,A}^{j})_{j\geq 2}$ de base $R$ par
 \[ 
H_{p^n,L,A}^{j+1}(F) := H_{p^n,L}^{j+1}(F)/(K_{j-1}(F) \cdot [A_F]).
\]
\end{defin}

\begin{rem}
Le choix de $\Lbar$ est bien possible d\^u au th\'eor\`eme de Wedderburn qui donne une extension finie, s\'eparable $\Lbar'$ qui
d\'eploie $\overline{A}$.  On obtient $\Lbar$ en prenant une extension finie de $\Lbar'$ tel que l'extension $\Lbar/k$ est galoisienne.  Puis, on associe donc un triplet de Cohen $(L,S,\Lbar)$ \`a $\Lbar$.  

On peut m\^eme 
supposer que $\Lbar$ est une extension cyclique de $k$.
En effet, le th\'eor\`eme d'Albert \cite[Thm. 18]{albertcycl} dit que toute
$k$-alg\`ebre simple centrale de degr\'e $p^n$ est Brauer-\'equivalente \`a une $k$-alg\`ebre cyclique, et la d\'efinition de $\SKb_1(A)$ 
ne d\'epend pas du\break repr\'esentant de la classe de Brauer de $A$ (isomorphisme de Morita \cite[Lem. 2.8.6]{gilleszam}).

Le fait qu'on choisit $\Lbar$ comme corps de d\'ecomposition de $\overline{A}$ est afin de garantir que la multiplication scalaire 
se trouve dans $\Hcal_{p^n,L}^\ast$.  En effet, pour une extension $F$ de $k$ l'extension de base de $\overline{A}$ \`a $\Lbar$ rend $[\overline{A}_F]$ triviale dans le groupe de Brauer, et donc \'egalement le sous-groupe $K_{j-1}(F).[A_F]$.    
De plus, $L$ deploie $A$ parce que $\Br(\Lbar)\to \Br(L)$ est injective (\S \ref{sec:relevbase} \ref{sec:relevasc}).  
\end{rem}

Il faut donc v\'erifier que cette d\'efinition relative d\'efinit bien un module de cycles.  
On se fonde sur le fait qu'on dispose d'un module de cycles dans le cas absolu, et on
v\'erifie que les donn\'ees sont bien d\'efinies modulo les sous-groupes en jeu.
Les donn\'ees D1, D2 et D3 suivent plus au moins de la d\'efinition, et parce que les corps en jeu ont
toujours la m\^eme caract\'eristique.  La donn\'ee D4 pour le cas d'\'egale caract\'eristique (du corps de valuation
discr\`ete et de son corps r\'esiduel) suit aussi de la d\'efinition des modules de cycles et des r\'esidus de $\Hcal^\ast_{p^n,L}$.
Il suffit donc de v\'erifier la donn\'ee D4 pour le cas de caract\'eristique mixte.  De plus, on veut
g\'eneraliser la suite exacte \eqref{eq:suitesauvl}.

\begin{prop}
Avec les m\^emes donn\'ees de la D\'efinition \ref{def:modcycsauvrel}, on a pour tout triplet de valuation $(F,\Ocal_v,\kappa(v))$ 
sur $R$ une suite exacte 
\[ 0 \to  H_{p^n,L,A}^{q+1}(\kappa(v)) \to H_{p^n,L,A}^{q+1}(F) \to H_{p^n,L,A}^{q}(\kappa(v)) \to 0. \]
\end{prop}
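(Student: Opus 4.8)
Le plan est de reprendre le canevas du cas mod\'er\'e (diagramme \eqref{eq:suitemod} et lemme du serpent), mais en travaillant directement avec la suite exacte scind\'ee \eqref{eq:suitesauvl} d\'ej\`a \'etablie pour $\Hcal_{p^n,L}^\ast$ (Proposition \ref{prop:suitesauv}). Posons $N^{j+1}(E):=K_{j-1}(E)\cdot[A_E]\subseteq H^{j+1}_{p^n,L}(E)$ pour tout $R$-corps $E$, de sorte que $H^{j+1}_{p^n,L,A}(E)=H^{j+1}_{p^n,L}(E)/N^{j+1}(E)$. On se ram\`ene imm\'ediatement au cas o\`u $F$ est complet: sinon, on invoque la Remarque \ref{rem:ressauvnoncom} et la fonctorialit\'e de l'action de la $K$-th\'eorie et de la classe de Brauer le long de $F\to\hat F$, qui envoient $N^{q+1}(F)$ dans $N^{q+1}(\hat F)$. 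L'essentiel de la preuve sera alors d'\'etablir que, relativement au scindage $\iota\oplus\psi$ de \eqref{eq:suitesauvl} (restriction de celui de \eqref{eq:suitesauv} d\'ecrit dans la Remarque \ref{rem:ressauv}, le cas d'\'egale caract\'eristique $p$ y \'etant trait\'e par Izhboldin), on a la d\'ecomposition
\[ N^{q+1}(F)=\iota\bigl(N^{q+1}(\kappa(v))\bigr)\ \oplus\ \psi\bigl(N^{q}(\kappa(v))\bigr). \]
Une fois ceci acquis, on passe aux quotients dans \eqref{eq:suitesauvl}: les morphismes $\iota$ et $\partial_v$ induisent $\bar\iota:H^{q+1}_{p^n,L,A}(\kappa(v))\to H^{q+1}_{p^n,L,A}(F)$ et $\bar\partial_v:H^{q+1}_{p^n,L,A}(F)\to H^{q}_{p^n,L,A}(\kappa(v))$, et la d\'ecomposition ci-dessus donne aussit\^ot que $\bar\iota$ est injectif d'image $\ker\bar\partial_v$, que $\bar\partial_v$ est surjectif, et que $\psi$ induit un scindage de $\bar\partial_v$: c'est exactement la suite exacte scind\'ee cherch\'ee.

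Pour \'etablir la d\'ecomposition de $N^{q+1}(F)$ je proc\'ederai en trois temps. D'abord, $[A_F]$ est non ramifi\'ee: comme $A$ provient d'une $R$-alg\`ebre d'Azumaya (\S \ref{sec:relevbase} \ref{sec:relevasc}), $A_F$ s'\'etend en une alg\`ebre d'Azumaya sur $\Ocal_v$, donc $[A_F]$ provient de $\Br(\Ocal_v)$; il s'ensuit que $\partial_v[A_F]=0$ et que $[A_F]=\iota([A_{\kappa(v)}])$ dans $H^2_{p^n,L}(F)$, avec $[A_{\kappa(v)}]\in N^2(\kappa(v))=\ZZ\cdot[A_{\kappa(v)}]$. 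Ensuite, les formules explicites pour $\iota$ et $\psi$ (Remarque \ref{rem:ressauv}) donnent directement, pour une classe $z$ dans le groupe ad\'equat et des unit\'es $u_1,\ldots,u_m\in\Ocal_v^\times$, les identit\'es $\psi(z)=\{\pi\}\cdot\iota(z)$ et $\{u_1,\ldots,u_m\}\cdot\iota(z)=\iota\bigl(\{\bar u_1,\ldots,\bar u_m\}\cdot z\bigr)$; que ce soit via le symbole galoisien et le cup-produit en caract\'eristique in\'egale, ou via le produit tensoriel sur les vecteurs de Witt en \'egale caract\'eristique $p$, ces v\'erifications se lisent sur les formules. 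Enfin, gr\^ace \`a l'identit\'e $\{\pi,\pi\}=\{-1,\pi\}$, le groupe $K_{q-1}(F)$ est engendr\'e par les symboles d'unit\'es $\{u_1,\ldots,u_{q-1}\}$ et par les symboles $\{\pi,u_1,\ldots,u_{q-2}\}$, avec $u_i\in\Ocal_v^\times$.

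En combinant ces points, $N^{q+1}(F)$ est engendr\'e par les \'el\'ements $\{u_1,\ldots,u_{q-1}\}\cdot[A_F]=\iota\bigl(\{\bar u_1,\ldots,\bar u_{q-1}\}\cdot[A_{\kappa(v)}]\bigr)$, qui appartiennent \`a $\iota(N^{q+1}(\kappa(v)))$, et par les \'el\'ements $\{\pi,u_1,\ldots,u_{q-2}\}\cdot[A_F]=\{\pi\}\cdot\iota\bigl(\{\bar u_1,\ldots,\bar u_{q-2}\}\cdot[A_{\kappa(v)}]\bigr)=\psi\bigl(\{\bar u_1,\ldots,\bar u_{q-2}\}\cdot[A_{\kappa(v)}]\bigr)$, qui appartiennent \`a $\psi(N^{q}(\kappa(v)))$; l'inclusion r\'eciproque de $\iota(N^{q+1}(\kappa(v)))+\psi(N^q(\kappa(v)))$ dans $N^{q+1}(F)$ r\'esulte de la m\^eme fonctorialit\'e, et la somme est directe puisqu'elle l'est d\'ej\`a \`a l'int\'erieur de $H^{q+1}_{p^n,L}(F)$. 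Le cas d'\'egale caract\'eristique $0$ ne demande pas de traitement s\'epar\'e: les groupes $H^{\ast}_{p^n}$ y co\"incident avec la cohomologie galoisienne \`a coefficients dans les $\mu_{p^n}^{\otimes\ast}$ et l'\'enonc\'e se ram\`ene alors \`a \eqref{eq:suitemod}. L'obstacle principal est pr\'ecis\'ement la d\'ecomposition de $N^{q+1}(F)$: une fois admis que $[A_F]$ est non ramifi\'ee et que les scindages explicites de Kato et d'Izhboldin sont compatibles avec l'action de la $K$-th\'eorie, le reste est formel, mais il faut conduire cette v\'erification uniform\'ement dans les trois situations de caract\'eristique.
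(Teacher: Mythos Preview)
Your approach is essentially the paper's own: establish that the subgroups $N^{j+1}(E)=K_{j-1}(E)\cdot[A_E]$ are compatible with the splitting $\iota\oplus\psi$ of the absolute sequence \eqref{eq:suitesauvl}, then pass to quotients. The paper organises this as a commutative ladder and you organise it as a direct-sum decomposition of $N^{q+1}(F)$, but the content is the same, and both rely on the same explicit description of $i^\ast_K,\partial_K,\psi_K$ on Milnor $K$-groups.

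There is, however, one step you treat as automatic which the paper singles out as the crux in mixed characteristic. You write ``$[A_F]$ provient de $\Br(\Ocal_v)$; il s'ensuit que $\partial_v[A_F]=0$ et que $[A_F]=\iota([A_{\kappa(v)}])$''. From $\partial_v[A_F]=0$ and exactness you only get $[A_F]=\iota(c)$ for some $c\in H^2_{p^n,L}(\kappa(v))$; identifying $c$ with $[A_{\kappa(v)}]$ amounts to showing that Kato's map $i^\ast:H^2_{p^n}(\kappa(v))\to H^2_{p^n,\text{nr}}(F)$ coincides (up to sign) with the injection ${}_{p^n}\Br(\kappa(v))\hookrightarrow{}_{p^n}\Br(F)$ coming from lifting Azumaya algebras over $\Ocal_v$. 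In equal characteristic this is transparent from the definitions, but in mixed characteristic $i^\ast$ is built from the cup-product via the Witt--Artin--Schreier isomorphism and the Galois symbol, and its agreement with the geometric Brauer map is not formal. The paper carries this out by an explicit cocycle computation (the diagram \eqref{eq:diabrres}); once that commutativity is in hand, the rest of your argument goes through exactly as you wrote it. So your plan is correct, but you should not skip this verification --- it is precisely the ``obstacle principal'' you allude to at the end.
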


\begin{proof}
Suite \`a la discussion pr\'ecedente, il suffit de d\'emontrer l'\'enonc\'e dans le cas de caract\'eristique mixte ($\car(F)=0$ et
$\car(\kappa(v))=p$).
Le but est donc de v\'erifier que la suite exacte \eqref{eq:suitesauvl} commute avec les inclusions dans un
diagramme commutatif (au signe pr\`es et pour $q\geq 2$)
\[ 
\xymatrix{ 
0 \ar[r] & 
H_{p^n,L}^{q+1}(\kappa(v)) \ar[r]^{i^\ast} &
H_{p^n,L}^{q+1}(F) \ar[r]^\partial &
H_{p^n,L}^{q}(\kappa(v)) \ar[r] &
0 
\\
0 \ar[r] &
K_{q-1}(\kappa(v))\cdot [\overline{A}_{\kappa(v)}] \ar@{^(->}[u] \ar@{.>}[r] &
K_{q-1}(F) \cdot [A_F] \ar@{^(->}[u] \ar@{.>}[r] &
K_{q-2}(\kappa(v))\cdot [\overline{A}_{\kappa(v)}] \ar@{^(->}[u] \ar[r] &
0.
}
\]
V\'erifions tout d'abord qu'au signe pr\`es le diagramme
\begin{equation} \label{eq:diabrres}
\xymatrix{
H^2_{p^n}(\kappa(v)) \ar[d]_{\cong} \ar[r]^{i^\ast}  & H^2_{p^n,\text{nr}}(F) \ar[d]^{\cong}   \\
_{p^{n}}\Br(\kappa(v)) \ar[r]^i & _{p^{n}}\Br_{\text{nr}}(F)  
}
\end{equation}
commute, o\`u $\Br_{\text{nr}}(F)=\ker(\Br(F)\to \Br(F_{\text{nr}}))$, $i^\ast$ le morphisme de la suite \eqref{eq:suitesauv} et $i$ l'injection de \S \ref{sec:relevbase} \ref{sec:relevasc}.  Alors, la v\'erification est un calcul imm\'ediat, faisons-le.  Soit $\bar{a}\otimes \bar{x}\in H^2_{p^n}(\kappa(v))$ avec
$a \in W_n(\Ocal_v)$ et $x\in \Ocal_v^\times$.  Alors, 
\[ i^\ast(\bar{a}\otimes \bar{x})=\left((\tau(y)/y)^{\sigma(b)-b}\right)_{\sigma,\tau}\in H^2_{p^n}(F)\]
avec $y^p=x$ et $a=b^p-b$ pour $y\in W_n(F_{\text{nr}})$ et $b\in F_{\text{nr}}^\times$.  Alors, l'image dans $\phantom{}_{p^n}H^2(F,F_s^\times)\cong \phantom{}_{p^n}\Br(F)$ est repr\'esent\'ee par la m\^eme expression.  Par ailleurs, l'image de $\bar{a}\otimes \bar{x}\in H^2_{p^n}(\kappa(v))$ dans $\phantom{}_{p^n}H^2(\kappa(v),\kappa(v)_{s}^\times)\cong \phantom{}_{p^n}\Br(\kappa(v))$ est $c:=\bigl((\sigma(\bar{y})/\bar{y})^{\tau(\bar{b})-\bar{b}}\bigr)_{\sigma,\tau}$.  Alors,
\[ i(c)=\left((\sigma(y)/y)^{\tau(b)-b}\right)_{\sigma,\tau}\in H^2_{p^n}(F).\]  
Parce que $i^\ast$ est d\'efini par un cup-produit, il est \'egal \`a $i^\ast(\bar{a}\otimes \bar{x})$
\`a signe pr\`es.

La restriction de \eqref{eq:diabrres}  aux sous-groupes donne le diagramme commutatif (au signe pr\`es)
\[
\xymatrix{
H^2_{p^n,L}(\kappa(v)) \ar[d]_{\cong} \ar[r]^{i^\ast}  & H^2_{p^n,L}(F) \ar[d]^{\cong}   \\
_{p^{n}}\Br(\Lbar \otimes_k \kappa(v)/\kappa(v)) \ar[r]^i & _{p^{n}}\Br(L\otimes_K F/F),  
}
\]
avec 
\begin{eqnarray*}
  \Br\left(L\otimes_K F/F\right)&=&\ker \bigl[ \Br(F)\to \Br(L\otimes_K F)\bigr] \quad \text{ et } \\
\Br\left(\Lbar \otimes_k \kappa(v)/\kappa(v)\right)&=&\ker\bigl[\Br(\kappa(v))\to \Br(\Lbar \otimes_k \kappa(v))\bigr].
\end{eqnarray*}

La d\'emonstration de ce th\'eor\`eme suit tout directement de ce fait parce que les morphismes $i^\ast, \partial$ et la retraction $\psi$ (voir \ref{sec:suitekato}) respectent la structure de modules
de $K$-th\'eorie, et parce que le signe dispara\^\i t au niveau des groupes de quotient.  
Donc,
\begin{eqnarray*} 
i^\ast\bigl(K_{q-1}(\kappa(v))\cdot [\overline{A}_{\kappa(v)}]\bigr) & =& 
i^\ast_K\bigl(K_{q-1}(\kappa(v))\bigr)\cdot i^\ast\bigl( [\overline{A}_{\kappa(v)}]\bigr) \subset K_{q-1}(F) \cdot [A_F], \\
\partial\bigl(K_{q-1}(F) \cdot [A_F]\bigr) & = & \partial_K\bigl(K_{q-1}(F)\bigr) \cdot [\overline{A}_{\kappa(v)}] = K_{q-2}(\kappa(v)) \cdot [\overline{A}_{\kappa(v)}] \quad \text{et} \\
\psi\bigl(K_{q-2}(\kappa(v))\cdot [\overline{A}_{\kappa(v)}]\bigr) & =& 
\psi_K\bigl(K_{q-2}(\kappa(v))\bigr)\cdot i^\ast\bigl( [\overline{A}_{\kappa(v)}]\bigr) \subset K_{q-1}(F) \cdot [A_F].
\end{eqnarray*}
Ici,
\begin{eqnarray*}
i^\ast_K: K_{q-1}(\kappa(v))\to K_{q-1}(F),\quad & \text{d\'efini par} & \quad \{ \bar{x}_1,\ldots, \bar{x}_{q-1} \} \mapsto \{ x_1,\ldots, x_{q-1} \},\\
\partial_K: K_{q-1}(F) \to K_{q-2}(\kappa (v)), \quad & \text{d\'efini par} & \quad 
\begin{cases} 
\{ x_1,\ldots, x_{q-1} \} \mapsto 0, & \\
\{ \pi, x_2,\ldots, x_{q-2} \} \mapsto \{ \bar{x}_2,\ldots, \bar{x}_{q-2} \}, & 
\end{cases} \\
\psi_K: K_{q-2}(\kappa(v))\to K_{q-1}(F),\quad & \text{d\'efini par}& \quad \{ \bar{x}_2,\ldots, \bar{x}_{q-2} \} \mapsto \{ \pi, x_2,\ldots, x_{q-2} \},
\end{eqnarray*}
pour $x_i\in \Ocal_v^\times$ et $\pi$ une uniformisante de $F$ pour la valuation $v$.
\end{proof}

Notons que cette suite exacte satisfait une propri\'et\'e analogue au Corollaire \ref{corr:cohencoh}, puisque dans ce cas les scindages sont  aussi
donn\'es par un choix d'uniformisante qui est canonique pour des anneaux de Cohen -- voir les d\'efinitions dans la Remarque \ref{rem:ressauv}.

\begin{corr}
Prenons les m\^emes donn\'ees de la D\'efinition \ref{def:modcycsauvrel} et $(F,\Ocal_v,\kappa(v))$ une extension de
Cohen de $(K,R,k)$.  Notons $u:k\to \kappa(v)$ l'inclusion.
Le Th\'eor\`eme \ref{thm:cohen} (\ref{thm:coheni}) fournit un homomorphisme local
$v:R\to \Ocal_v$, alors $v$ d\'efinit
pour tous entiers $i,n\geq 0$ un homomorphisme de suites exactes scind\'ees: 
\begin{equation} \label{eq:reskth}
\xymatrix{
0 \ar[r] & 
H^{i+1}_{p^n,L,A}(k) \ar[r] \ar[d]^{u_\ast} & 
H^{i+1}_{p^n,L,A}(K) \ar[r]^{\partial^i} \ar[d]^{v_\ast} &
H^i_{p^n,L,A} (k) \ar[r] \ar[d]^{u_\ast} &
0 \\
0 \ar[r] & 
H^{i+1}_{p^n,L,A}(\kappa(v)) \ar[r] & 
H^{i+1}_{p^n,L,A}(F) \ar[r]^{\partial^i} &
H^{i}_{p^n,L,A} (\kappa(v)) \ar[r] &
0.
}
\end{equation}
De plus, $v_\ast$ ne d\'epend pas du choix de $v$.  Si $k=k'$, on a un isomorphisme canonique $H^{i+1}_{n,B_K}(K)\cong H^{i+1}_{n,B_K}(F)$.
\end{corr}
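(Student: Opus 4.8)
Le plan est de reprendre la strat\'egie de la d\'emonstration du Corollaire \ref{corr:cohencoh}, en rempla\c{c}ant la suite exacte mod\'er\'ee \eqref{eq:suitemod} par la suite exacte relative sauvage \'etablie dans la proposition pr\'ec\'edente, et en exploitant --- comme le signale la remarque qui la suit --- que son scindage est donn\'e par un choix d'uniformisante canonique pour des anneaux de Cohen. D'abord on remarque que, $(K,R,k)$ et $(F,\Ocal_v,\kappa(v))$ \'etant tous deux des $p$-triplets de Cohen, les id\'eaux maximaux de $R$ et de $\Ocal_v$ sont engendr\'es par $p$; tout homomorphisme local $v\colon R\to \Ocal_v$ fourni par le Th\'eor\`eme \ref{thm:cohen} (\ref{thm:coheni}) envoie donc l'uniformisante $p\in R$ sur l'uniformisante $p\in \Ocal_v$, se restreint en $u\colon k\to \kappa(v)$ sur les corps r\'esiduels et s'\'etend en un plongement $K\to F$ sur les corps des fractions, ce qui d\'efinit les fl\`eches verticales $u_\ast$ et $v_\ast$ de \eqref{eq:reskth}.

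Ensuite on v\'erifie la commutativit\'e de \eqref{eq:reskth}. D'apr\`es la proposition pr\'ec\'edente, chaque ligne est une suite scind\'ee relative du type \eqref{eq:suitesauvl}; seul intervient ici le cas de caract\'eristique mixte, puisque $\car(F)=\car(K)=0$ et $\car(\kappa(v))=\car(k)=p$. Dans ce cas, l'inclusion $i^\ast$ et la r\'etraction $\psi$ de la Remarque \ref{rem:ressauv} sont d\'ecrites par des cup-produits de symboles construits \`a partir de vecteurs de Witt et d'une uniformisante que l'on prend \'egale \`a $p$ des deux c\^ot\'es. Comme $v(p)=p$, comme l'homomorphisme $i$ sur les vecteurs de Witt et le symbole galoisien sont fonctoriels, et comme les sous-groupes relatifs $K_{q-1}(F)\cdot[A_F]$ et $K_{q-2}(\kappa(v))\cdot[\overline{A}_{\kappa(v)}]$ sont respect\'es par $i^\ast$, $\partial$ et $\psi$ (voir la d\'emonstration de la proposition pr\'ec\'edente, ainsi que \eqref{eq:diabrres}), les deux carr\'es de \eqref{eq:reskth} commutent, le signe \'eventuel disparaissant sur les quotients.

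Pour l'ind\'ependance du choix de $v$: deux homomorphismes locaux $v_1,v_2\colon R\to \Ocal_v$ au-dessus de $u$ induisent le m\^eme morphisme $u_\ast$ sur $H^{i+1}_{p^n,L,A}(k)\to H^{i+1}_{p^n,L,A}(\kappa(v))$ et sur $H^{i}_{p^n,L,A}(k)\to H^{i}_{p^n,L,A}(\kappa(v))$, ces groupes et ces fl\`eches ne d\'ependant que de $u$. Le scindage canonique de \eqref{eq:suitesauvl} par $p$ identifiant $H^{i+1}_{p^n,L,A}(F)$ avec $H^{i+1}_{p^n,L,A}(\kappa(v))\oplus H^{i}_{p^n,L,A}(\kappa(v))$ de mani\`ere compatible avec la d\'ecomposition correspondante de $H^{i+1}_{p^n,L,A}(K)$, le morphisme $v_\ast$ est enti\`erement d\'etermin\'e par ses deux composantes, d'o\`u $(v_1)_\ast=(v_2)_\ast$. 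Enfin, si $u$ est un isomorphisme (le cas $k=\kappa(v)$), le Th\'eor\`eme \ref{thm:cohen} (\ref{thm:coheni}) montre que $v$ en est un aussi, donc \eqref{eq:reskth} devient un isomorphisme de suites exactes et fournit en particulier l'isomorphisme canonique $H^{i+1}_{p^n,L,A}(K)\cong H^{i+1}_{p^n,L,A}(F)$.

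\emph{La difficult\'e principale} est la v\'erification de la compatibilit\'e stricte des morphismes de scindage de Kato--Izhboldin $i^\ast$ et $\psi$ avec $v_\ast$ en caract\'eristique mixte; mais elle est essentiellement impos\'ee par leur description explicite dans la Remarque \ref{rem:ressauv} d\`es que l'uniformisante est normalis\'ee \`a $p$, compte tenu du calcul \eqref{eq:diabrres} d\'ej\`a men\'e.
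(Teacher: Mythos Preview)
Your proof is correct and follows the same approach as the paper: the paper does not give a separate proof environment for this corollary but simply notes, in the sentence preceding it, that the result is analogous to Corollaire~\ref{corr:cohencoh} because the splittings of the wild exact sequence are again given by a choice of uniformiser which is canonical ($=p$) for Cohen rings (Remarque~\ref{rem:ressauv}). Your argument is a faithful and more detailed unpacking of exactly this reasoning.
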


\subsection{Le rel\`evement}

Avant de relever, on d\'emontre un r\'esultat analogue \`a celui de Merkurjev (Proposition \ref{prop:merkurjev}).  Ce sera un corollaire imm\'ediat du Th\'eor\`eme de Kahn suivant qui utilise des groupes de cohomologie de Zariski avec leur variante
r\'eduite:  
\[ \tilde{H}^0_{Zar}(\GB,H^3_{p^n}) \cong H^0_{Zar}(\GB,H^3_{p^n})/H^3_{p^n}(k). \]

\begin{thm}[Kahn \cite{kahnweight2mot}] \label{thm:kahn}
Soient $k$ un corps de $\car(k)=p$,  $\GB$ un $k$-groupe alg\'ebrique simplement connexe, absolument $k$-simple et $\GBbar:=\GB\times_k k_s$ et $n>0$ entier. 
Si $\CH^2(G)=0$, on a une injection
\[
 \tilde{H}^0_{\text{Zar}}(\GB,H^3_{p^n}) \hookrightarrow H^0_{\text{Zar}}(\GBbar,H^3_{p^n}).
\]
\end{thm}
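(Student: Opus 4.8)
The plan is to reinterpret Kato's group as mod-$p^n$ motivic cohomology of weight two and then to play the coniveau (Bloch--Ogus) spectral sequence off against the Hochschild--Serre spectral sequence for the base change $\GBbar\to\GB$, exploiting that $\GB$ is semisimple simply connected to kill the error terms. By \eqref{eq:isonu} one has $H^{3}_{p^n}(F)\cong H^1(F,\nu_n(2))$, and by Geisser--Levine $\mathbb{Z}/p^n(2)_{\text{et}}\simeq\nu_n(2)[-2]$, so $H^3_{p^n}(F)=H^3_{\text{et}}(F,\mathbb{Z}/p^n(2))$. The Zariski sheaf attached to $F\mapsto H^3_{p^n}(F)$ is thus the sheaf $\mathcal{H}^3_{\text{et}}(\mathbb{Z}/p^n(2))$, and the Gersten resolution for the sheaves $\nu_n(\cdot)$ (Gros--Suwa) --- which is precisely the Gersten complex of \S\ref{sec:gersten} for the cycle module $\Hcal^\ast_{p^n}$ --- identifies its sections over $\GB$ with $H^0_{\text{Zar}}(\GB,H^3_{p^n})=A^0(\GB,\Hcal^3_{p^n})$. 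Over $k_s$ the target group already coincides with its reduced quotient, since $H^3_{p^n}(k_s)=0$ by Theorem \ref{thm:katoizh}.

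In the coniveau spectral sequence $E_2^{s,t}=H^s_{\text{Zar}}(\GB,\mathcal{H}^t_{\text{et}}(\mathbb{Z}/p^n(2)))\Rightarrow H^{s+t}_{\text{et}}(\GB,\mathbb{Z}/p^n(2))$ the complex $\mathbb{Z}/p^n(2)$ is concentrated in cohomological degrees $[1,2]$, so $\mathcal{H}^t_{\text{et}}(\mathbb{Z}/p^n(2))=0$ for $t\le 1$ and the only differential touching $E_2^{0,3}=H^0_{\text{Zar}}(\GB,H^3_{p^n})$ is $d_2\colon E_2^{0,3}\to E_2^{2,2}=H^2_{\text{Zar}}(\GB,\mathcal{H}^2_{\text{et}}(\mathbb{Z}/p^n(2)))$. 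By the Bloch--Kato--Gabber theorem $\mathcal{H}^2_{\text{et}}(\mathbb{Z}/p^n(2))\cong\mathcal{K}^M_2/p^n$ as Zariski sheaves, so by Bloch's formula and the absence of $p$-torsion in Milnor $K$-theory in characteristic $p$ one obtains $E_2^{2,2}\cong\CH^2(G)/p^n$. The hypothesis $\CH^2(G)=0$ therefore forces $d_2=0$, hence $E_\infty^{0,3}=E_2^{0,3}$; equivalently, $H^0_{\text{Zar}}(\GB,H^3_{p^n})$ is the quotient of $H^3_{\text{et}}(\GB,\mathbb{Z}/p^n(2))$ by its coniveau-one subgroup. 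Only the group $\GB$ over $k$ is used here, not $\GBbar$.

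It remains to compare $H^3_{\text{et}}(\GB,\mathbb{Z}/p^n(2))=H^1_{\text{et}}(\GB,\nu_n(2))$ with its analogue over $\GBbar$ through the Hochschild--Serre spectral sequence $H^a(\Gamma_k,H^b_{\text{et}}(\GBbar,\nu_n(2)))\Rightarrow H^{a+b}_{\text{et}}(\GB,\nu_n(2))$. Because $\GB$ is semisimple simply connected, $\GBbar$ is a rational variety with $k_s[\GBbar]^\times=k_s^\times$ and $\Pic(\GBbar)=0$; feeding this into the Cartier and localization sequences for $\nu_n(\cdot)$ --- equivalently, using the vanishing of the reduced unramified Brauer group of $\GBbar$ in its de Rham--Witt incarnation --- one gets $H^0_{\text{et}}(\GBbar,\nu_n(2))=\nu_n(2)_{k_s}$. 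Consequently the edge map $H^1_{\text{et}}(\GB,\nu_n(2))\to H^1_{\text{et}}(\GBbar,\nu_n(2))^{\Gamma_k}$ has kernel $H^1(\Gamma_k,\nu_n(2)_{k_s})$, which by \eqref{eq:isonu} is precisely the image of the constant classes $H^3_{p^n}(k)$; hence the induced map on the quotients by these constants, $H^3_{\text{et}}(\GB,\mathbb{Z}/p^n(2))/H^3_{p^n}(k)\hookrightarrow H^3_{\text{et}}(\GBbar,\mathbb{Z}/p^n(2))$, is injective. A diagram chase comparing this Hochschild--Serre filtration with the coniveau filtration of the previous paragraph then pushes the injectivity down to the quotients $H^0_{\text{Zar}}(-,H^3_{p^n})$ and yields the desired injection $\tilde{H}^0_{\text{Zar}}(\GB,H^3_{p^n})\hookrightarrow H^0_{\text{Zar}}(\GBbar,H^3_{p^n})$.

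The genuinely delicate inputs are the positive-characteristic substitutes for the classical Bloch--Ogus--Gabber package: one needs that the sheaves $\nu_n(q)$ enjoy Gersten purity and the expected Zariski/étale comparison (Gros--Suwa, together with Gros's construction of Chern classes in logarithmic Hodge--Witt cohomology), these sheaves being very far from locally constant. Equally essential --- and the real geometric content --- is the equality $H^0_{\text{et}}(\GBbar,\nu_n(2))=\nu_n(2)_{k_s}$ for a simply connected group, the honest $p$-primary analogue of the triviality of the reduced unramified Brauer group; and the final diagram chase, matching two filtrations on $H^3_{\text{et}}(\GB,\mathbb{Z}/p^n(2))$, is where the absolute simplicity and simple-connectedness are really consumed. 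The rest is spectral-sequence bookkeeping, together with the elementary point that $k$ is separably --- hence, $\GB$ being geometrically integral, also inside $k(\GB)$ --- algebraically closed, so that $\Gal(k_s(\GB)/k(\GB))=\Gamma_k$ and the Hochschild--Serre formalism applies on function fields as well.
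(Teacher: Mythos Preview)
Your approach is in the same spirit as the paper's but follows a different route: you work directly with mod-$p^n$ \'etale motivic cohomology via Geisser--Levine and couple the coniveau spectral sequence with Hochschild--Serre for $\GBbar\to\GB$, whereas the paper simply quotes two exact sequences of Kahn built from Lichtenbaum's \emph{integral} complex $\Gamma(2)$. In the paper the injection factors as
\[
\tilde H^0_{\text{Zar}}(\GB,H^3_{p^n})\hookrightarrow \Hb^5(\GBbar/k_s,\Gamma(2))^{\Gamma}\hookrightarrow H^0_{\text{Zar}}(\GBbar,H^3_{p^n}),
\]
the first injectivity coming from the vanishing $H^1\bigl(\Gamma_k,H^1_{\text{Zar}}(\GBbar,\mathcal K_2)\bigr)=H^1(\Gamma_k,\mathbb Z)=0$ (using $H^1_{\text{Zar}}(\GBbar,\mathcal K_2)\cong\mathbb Z$ with trivial Galois action), the second from $\CH^2(\Gbar)^\Gamma=0$. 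So the paper's proof is essentially a citation; yours is an attempt to \emph{reprove} Kahn's statement by more elementary means.

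The genuine gap in your argument is the last step, which you call a ``diagram chase''. Granting everything before it, you have a commutative diagram with exact rows
\[
\xymatrix{
0\ar[r] & H^1_{\text{Zar}}(\GB,\nu_n(2))\ar[r]\ar[d]^{f} & H^3_{\text{\'et}}(\GB,\mathbb Z/p^n(2))\ar[r]\ar[d]^{r} & H^0_{\text{Zar}}(\GB,H^3_{p^n})\ar[r]\ar[d]^{\bar r} & 0\\
0\ar[r] & H^1_{\text{Zar}}(\GBbar,\nu_n(2))\ar[r] & H^3_{\text{\'et}}(\GBbar,\mathbb Z/p^n(2))\ar[r] & H^0_{\text{Zar}}(\GBbar,H^3_{p^n})\ar[r] & 0
}
\]
and you know $\ker r=H^3_{p^n}(k)$. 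The snake lemma then gives
\[
\ker(\bar r)/H^3_{p^n}(k)\ \hookrightarrow\ \mathrm{coker}\,f,
\]
so the statement you want is equivalent to the \emph{surjectivity} of $f\colon H^1_{\text{Zar}}(\GB,\nu_n(2))\to H^1_{\text{Zar}}(\GBbar,\nu_n(2))\cong\mathbb Z/p^n$. You do not prove this, and it is not a formality: it amounts to producing over $k$ a class hitting a generator of the canonical $\mathbb Z/p^n$, which is precisely the kind of descent statement Kahn's integral machinery is designed to supply (his error term $H^1(\Gamma_k,\mathbb Z)$ vanishes for trivial group-cohomological reasons, whereas your error term $\mathrm{coker}\,f$ does not die without further input). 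If you want to rescue your line of argument, you must either establish this surjectivity directly---essentially the existence of a Rost-type class defined over $k$ with image a generator mod $p^n$---or pass through the integral $\mathcal K_2$-cohomology as Kahn does. Either way, the ``diagram chase'' is the entire content of the theorem, not a bookkeeping afterthought.
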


\begin{rem}
 Dans l'enonc\'e (et dans la suite), $G$ (resp. $\Gbar$) signifie $\GB(k)$ (resp. $\GBbar(k_s)$).
\end{rem}

\begin{proof}
Soit $\Gamma=\Gamma_k$ le groupe de Galois absolu de $k$.
Utilisant la cohomologie motivique \`a la Lichtenbaum, Kahn construit 
un morphisme (ibid., premi\`ere suite exacte p.~406)
\begin{equation} \label{eq:morphkahn}
 \tilde{H}^0_{\text{Zar}}(\GB,H^3_{p^n}) \to \Hb^5(\GBbar/k_s,\Gamma(2))^\Gamma
\end{equation}
dont le noyau est contenu dans $H^1(F,H^1_{\text{Zar}}(\GBbar,\mathcal{K}_2))$.  
Ici, $\Hb^5(\GB/k_s,\Gamma(2))$ est un groupe d'hypercohomologie d\'efini par Kahn comme
le (5-i\`eme) groupe de hypercohomologie \'etale d'un complexe relatif bas\'e sur le
complexe $\Gamma(2)$ de Lichtenbaum \cite{lichtgamma2}, et $\mathcal{K}_2$ est le faisceau de Zariski associ\'e au pr\'efaisceau $U\mapsto K_2^Q(U)$ (avec $K_2^Q$ la $K$-th\'eorie de Quillen). 
Afin de d\'efinir le morphisme, il faut que $H^0_{\text{Zar}}(\GBbar,\mathcal{K}_2)\cong K_2^Q(k_s)$: fait qui est d\^u \`a Esnault-Kahn-Levine-Viehweg \cite[Prop. 3.20]{eskalevi}.
    Puisque $H^1_{\text{Zar}}(\GBbar,\mathcal{K}_2)=\Zb$ \cite[Prop. 1']{invcohrostpos}, 
le morphisme \eqref{eq:morphkahn} est injectif.  (Voir les r\'ef\'erences pour plus de d\'etails sur ces objets qu'on utilise ici jusque commes objets auxiliaires.)
Puisque $\CH^2(\Gbar)^\Gamma=0$ \cite[Prop. 3.20]{eskalevi}, l'enonc\'e suit alors de l'injection suivante de Kahn (ibid., suite exacte (18) p.~404): 
\[ \Hb^5(\GBbar/k_s,\Gamma(2))^\Gamma \hookrightarrow H^0_{\text{Zar}}(\GBbar,H^3_{p^n}). \]
\end{proof}

\begin{corr} \label{corr:invnulsau}
Soient $k$ un corps de caract\'eristique $p>0$, $L$ une extension finie s\'eparable de $k$ et $\GB$ un $k$-groupe alg\'ebrique lisse semi-simple simplement connexe absolument $k$-simple tel
que $\CH^2(G)=0$. Alors,  $\Inv^3(\GB,\Hcal^3_{p^n,L})=0$ pour tout entier $n>0$.
\end{corr}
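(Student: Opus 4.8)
The strategy is to reduce the vanishing of $\Inv^3(\GB,\Hcal^3_{p^n,L})$ to the Kahn injection of Theorem~\ref{thm:kahn}, exactly as Corollary~\ref{corr:invnulmod} reduces its statement to Merkurjev's Proposition~\ref{prop:merkurjev}. First I would invoke the Merkurjev correspondence of \S\ref{sec:lieninv}~\ref{sec:lienmerk}: since $\GB$ is a smooth connected algebraic group, $\Inv^3(\GB,\Hcal^\ast_{p^n,L})$ injects into $A^0(\GB,\Hcal^3_{p^n,L})$, and in fact lands in the reduced subgroup $\tilde A^0(\GB,\Hcal^3_{p^n,L})$. So it suffices to show $\tilde A^0(\GB,\Hcal^3_{p^n,L})=0$. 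Now $\Hcal^3_{p^n,L}(F)$ is by Definition~\ref{def:modcycsauv} a subgroup of $H^3_{p^n}(F)$, namely the kernel of restriction to $F\otimes_k\Lbar$ (as $\GB$ is a $k$-group, only the residual-characteristic branch of the definition intervenes). The group $A^0(\GB,\Hcal^3_{p^n,L})$ is computed as the kernel of the first residue map in the Gersten complex, i.e. the unramified classes $H^0_{\text{Zar}}(\GB,\Hcal^3_{p^n,L})$, and similarly for its reduced version. Thus the claim becomes: $\tilde H^0_{\text{Zar}}(\GB,\Hcal^3_{p^n,L})=0$.

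The next step is to compare the $L$-relative group with the absolute one. Because $\Lbar/k$ is a finite separable extension, base change $\GB\times_k\Lbar$ and the localization sequences for the Gersten complexes of $\GB$ and $\GB\times_k\Lbar$ are compatible, and one has a restriction map $\tilde H^0_{\text{Zar}}(\GB,H^3_{p^n})\to \tilde H^0_{\text{Zar}}(\GB\times_k\Lbar,H^3_{p^n})$. I would argue that $\tilde H^0_{\text{Zar}}(\GB,\Hcal^3_{p^n,L})$ sits inside $\tilde H^0_{\text{Zar}}(\GB,H^3_{p^n})$ as the kernel of this restriction; then, chasing one more step, an element of this kernel becomes trivial after base change to $\Lbar$, hence a fortiori after base change to $k_s\supset\Lbar$. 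But Theorem~\ref{thm:kahn} (applicable since $\GB$ is simply connected, absolutely $k$-simple and $\CH^2(G)=0$) gives an injection $\tilde H^0_{\text{Zar}}(\GB,H^3_{p^n})\hookrightarrow H^0_{\text{Zar}}(\GBbar,H^3_{p^n})$, where $\GBbar=\GB\times_k k_s$. Composing with the fact that our element dies in $H^0_{\text{Zar}}(\GBbar,H^3_{p^n})$, we conclude it is zero, which is exactly $\tilde H^0_{\text{Zar}}(\GB,\Hcal^3_{p^n,L})=0$, whence $\Inv^3(\GB,\Hcal^3_{p^n,L})=0$.

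The main obstacle I anticipate is the bookkeeping in the second step: verifying that passing to the $L$-relative subgroup is compatible with the Merkurjev identification $\Inv^3\hookrightarrow\tilde A^0$ and with the Kahn map, i.e.\ that the kernel of restriction to $\Lbar$ really computes $\Hcal^3_{p^n,L}$ at the level of unramified cohomology of $\GB$, and that an element killed over $\Lbar$ is killed over $k_s$ (this last point is harmless since $\Lbar\subset k_s$, but one must check the restriction maps compose correctly through the Gersten complexes). Since $\GB$ here is a $k$-group and all fields occurring in its Gersten complex have characteristic $p$, only the equal-characteristic-$p$ residues of \S\ref{sec:sauvres} are used, so no mixed-characteristic subtlety enters; the argument is then a straightforward diagram chase analogous to the one in Corollary~\ref{corr:invnulmod}, but with Kahn's theorem replacing Merkurjev's.
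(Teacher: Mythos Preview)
Your proposal is correct and follows essentially the same route as the paper: reduce via Merkurjev's injection to showing $\tilde A^0(\GB,\Hcal^3_{p^n,L})=0$, identify this with $\tilde H^0_{\text{Zar}}(\GB,\Hcal^3_{p^n,L})$ via Rost's comparison, observe that an element therein lies in $\tilde H^0_{\text{Zar}}(\GB,H^3_{p^n})$ and dies over $L$ (hence over $k_s$, since $L\subset k_s$), and conclude by Kahn's injection. The bookkeeping you flag is handled in the paper by working directly at the generic point: one simply notes that the restriction $H^3_{p^n}(k(\GB))\to H^3_{p^n}(k_s(\GB))$ factors through $H^3_{p^n}(k(\GB)\otimes_k L)$, so no separate compatibility argument with Gersten complexes of $\GB\times_k L$ is needed.
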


\begin{rem}
Ici, $\Hcal^3_{p^n,L}$ est le module de cycles de la D\'efinition \ref{def:modcycsauv} restreint
\`a $\kcorps$ suivant \S \ref{sec:cohomdef} \ref{sec:coexistence}.  On utilise ici $L$ au lieu de 
$\Lbar$ qui appara\^it dans la D\'efinition \ref{def:modcycsauv} pour ne pas allourdir la notation.
\end{rem}

\begin{proof}
Il suffit de d\'emontrer que  $\tilde{A}^0(\GB,\Hcal^3_{p^n,L})=0$.  Puisque Rost d\'emontre que $A^i(\GB,M_j)\cong H^i_{\text{Zar}}(\GB,M_j)$ pour un module de cycles $M$ et des entiers $i,j$ \cite[Cor. 6.16]{Rostmodcyc}, il suffit de d\'emontrer
que $\tilde{H}^0_\text{Zar}(\GB,\Hcal^3_{p^n,L})=0$.

Soit donc $x\in \tilde{H}^0_\text{Zar}(\GB,\Hcal^3_{p^n,L})\subset \tilde{H}^0_\text{Zar}(\GB,\Hcal^3_{p^n})$.  On sait que $H^3_{p^n}(k(\GB))\to H^3_{p^n}(k_s(\GB))$ se factorise \`a travers $H^3_{p^n}(k(\GB) \otimes L)$.
Alors,  $x\in \ker\left[H^3_{p^n}(k(\GB))\to H^3_{p^n}(k_s(\GB))\right]$, puisque  $x\in H^3_{p^n,L}(k(\GB))$, et donc $x\in \ker\bigl[\tilde{H}^0_\text{Zar}(\GB,\Hcal^3_{p^n})\to \tilde{H}^0_\text{Zar}(\GBbar,\Hcal^3_{p^n})\bigr]$.  Le Th\'eor\`eme \ref{thm:kahn} donne que $x=0$.
\end{proof}

Les arguments utilis\'es dans la d\'emonstration du Th\'eor\`eme \ref{thm:modere} sont de type homologique
et sont bien transf\'erables au cas sauvage rempla\c{c}ant la Proposition \ref{prop:merkurjev} par le 
Corollaire \ref{corr:invnulsau}.  On obtient donc le th\'eor\`eme suivant.

\begin{thm} \label{thm:sauvage}
Soient $k$ un corps de $\car(k)=p>0$, $A$ une $k$-alg\`ebre simple centrale de $\ind_k(A)=p^n$ et $\Lbar$ une extension
finie, galoisienne de $k$ qui d\'eploie $A$.  
Soit $(K,R,k)$ un triplet de Cohen associ\'e \`a $k$ et $(L,S,\Lbar)$ un triplet de Cohen associ\'e \`a $\Lbar$.  Soient  
$B$ la $R$-alg\`ebre d'Azumaya relev\'ee et $\rho'\in \Inv^4(\SKb_1(B_K),\Hcal_{p^n,L,\Bcal_K}^\ast)$.
Alors, il existe un unique $\rho \in \Inv^4(\SKb_1(A),\Hcal_{p^n,L,\Acal}^\ast)$, que l'on appelle \textit{l'invariant sp\'ecialis\'e de $\rho'$},
tel que pour toute extension de Cohen $(K',R',k')$ de $(K,R,k)$
le diagramme suivant commute:
\[ 
\xymatrix{ 
\SKb_1(A)(k') \ar[r]^{\rho_{k'}} & H^4_{p^n,L,A}(k') \ar[d] \\
\ar[u]_{\cong} \SKb_1(B_K)(K') \ar[r]_{\rho'_{K'}} 
  & H^4_{p^n,L,B_K} (K').
}
\]
 
\end{thm}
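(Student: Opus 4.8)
The plan is to repeat, almost verbatim, the proof of Théorème~\ref{thm:modere}, replacing $\Hcal_{n,\Acal}^\ast$ by Kato's module of cycles $\Hcal_{p^n,L,\Acal}^\ast$, the split exact sequence \eqref{eq:suitemod} by \eqref{eq:suitesauvl} together with its relative counterpart, and Merkurjev's Proposition~\ref{prop:merkurjev} by Corollaire~\ref{corr:invnulsau}. Concretely, fix $\rho'\in\Inv^4(\SKb_1(B_K),\Hcal_{p^n,L,\Bcal_K}^\ast)$. For a field extension $k'$ of $k$, choose a Cohen extension $(K',R',k')$ of $(K,R,k)$ (it exists by Théorème~\ref{thm:cohen}), use Platonov's isomorphism $\pi:\SKb_1(B_K)(K')\cong\SKb_1(A)(k')$ of Corollaire~\ref{corr:sk1iso}, and put $\tilde\rho_{k'}:=\partial^4\circ\rho'_{K'}\circ\pi^{-1}:\SKb_1(A)(k')\to H^3_{p^n,L,A}(k')$, where $\partial^4$ is the residue of the relative counterpart of \eqref{eq:suitesauvl}. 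Since $R'$ is a $p$-anneau de Cohen its uniformisante is canonically $p$, so the wild analogue of Corollaire~\ref{corr:cohencoh} --- the commuting diagram of split exact sequences \eqref{eq:reskth} --- shows that $\tilde\rho_{k'}$ does not depend on the chosen Cohen extension; together with the functoriality of $\pi$ and of $\rho'$ for Cohen extensions this produces an invariant $\tilde\rho\in\Inv^3(\SKb_1(A),\Hcal_{p^n,L,\Acal}^\ast)$.

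Next one shows $\tilde\rho=0$. By Lemme~\ref{prop:sksl} it is enough that $\Inv^3(\SLb_1(A),\Hcal_{p^n,L,\Acal}^\ast)=0$, and by Merkurjev's injection $\theta$ of \eqref{eq:invarmerk} together with Rost's identification $A^i\cong H^i_{\text{Zar}}$ \cite[Cor.~6.16]{Rostmodcyc} it suffices to prove $\tilde A^0(\SLb_1(A),\Hcal^3_{p^n,L,A})=0$. The group $\GB:=\SLb_1(A)$ is smooth, semi-simple, simply connected and absolutely $k$-simple (an inner form of $\SL_{\deg A}$, as $A\otimes_k k_s\cong M_{\deg A}(k_s)$), and $\CH^2(\SL_1(A))=0$; hence Corollaire~\ref{corr:invnulsau} gives $\tilde A^0(\GB,\Hcal^3_{p^n,L})=0$, i.e. the exactness of the central row in the wild analogue of diagram \eqref{diag:grand} built from $\Hcal^3_{p^n,L}$. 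One then runs the same diagram chase as in the proof of Corollaire~\ref{corr:invnulmod}: the top row is the Milnor $K_1$ divisor complex $K_1(k)/p^n\to K_1(k(\GB))/p^n\to\Div(\GB)/p^n\Div(\GB)$, whose last arrow is onto since $\Pic(\GB)=0$ \cite[Lem.~6.9]{sansuc}; the vertical maps are the $K$-module multiplications by $[A]$ of \S\ref{sec:sauvkth}, whose cokernels form the Gersten complex of $\Hcal_{p^n,L,\Acal}^\ast$ by Définition~\ref{def:modcycsauvrel}; chasing the diagram yields exactness of the bottom row, hence $\tilde A^0(\GB,\Hcal^3_{p^n,L,A})=0$ and $\tilde\rho=0$.

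Finally, the relative counterpart of \eqref{eq:suitesauvl} shows that for $a\in\SKb_1(A)(k')$ the class $\rho'_{K'}(\pi^{-1}(a))\in H^4_{p^n,L,B_K}(K')$ lifts to a unique $\rho_{k'}(a)\in H^4_{p^n,L,A}(k')$; functoriality of that sequence and independence of the Cohen choice make $\rho=(\rho_{k'})$ an element of $\Inv^4(\SKb_1(A),\Hcal_{p^n,L,\Acal}^\ast)$, the displayed square commutes by construction, and uniqueness of $\rho$ follows from the injectivity of $H^4_{p^n,L,A}(k')\hookrightarrow H^4_{p^n,L,B_K}(K')$.

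The Cohen-ring bookkeeping and the functoriality arguments were already carried out in \S\ref{sec:relevmod}, so the genuine work sits in the vanishing $\Inv^3(\SLb_1(A),\Hcal_{p^n,L,\Acal}^\ast)=0$. I expect the main obstacle to be twofold: first, applying Kahn's Théorème~\ref{thm:kahn} through Corollaire~\ref{corr:invnulsau} requires the input $\CH^2(\SL_1(A))=0$ --- a hypothesis that does not appear in Merkurjev's Proposition~\ref{prop:merkurjev}, and is therefore the one genuinely new ingredient compared with the moderate case; second, one must check that the homological diagram chase of Corollaire~\ref{corr:invnulmod} still goes through after replacing Galois cohomology by Kato's groups, which reduces to the compatibility of the residues, the $K$-module structure and the splittings of \eqref{eq:suitesauvl} --- precisely the package assembled in \S\ref{sec:sauvagecoh} --- so I expect it to be routine but the most bookkeeping-heavy part of the argument.
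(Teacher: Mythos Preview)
Your proposal is correct and follows essentially the same route as the paper: transplant the proof of Th\'eor\`eme~\ref{thm:modere} to the wild setting, replacing Proposition~\ref{prop:merkurjev} by Corollaire~\ref{corr:invnulsau} and running the same diagram chase with the $K_1$-divisor complex on top (the paper writes this row as $k^\times\to k(\GB)^\times\to\bigoplus\Zb$ rather than modulo $p^n$, but this is immaterial since the target groups are $p^n$-torsion). The one point you flag but leave open --- the input $\CH^2(\SL_1(A))=0$ needed for Corollaire~\ref{corr:invnulsau} --- is supplied in the paper by Panin's computation \cite{panin}, using that $\SLb_1(A)$ is an inner form of $\SLb_{\deg A}$.
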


\begin{rem}
 De nouveau, les modules de cycles $\Hcal_{p^n,L,\Bcal_K}^\ast:=(\Hcal^{j}_{p^n,L,\Bcal_K})_{j\geq 2}$ de base $K$ et $\Hcal_{p^n,L,\Acal}^\ast:=(\Hcal^{j}_{p^n,L,\Acal})_{j\geq 2}$ de base $k$ sont les modules de cycles \'evidents.  Il sont les modules de cycles restreints de la module de cycles $\Hcal_{p^n,L,\Bcal_K}^\ast$
de base $R$ respectivement \`a $K$ et \`a $k$ (selon \S \ref{sec:def} \ref{sec:coexistence}).
\end{rem}

\begin{proof}
Afin de g\'en\'eraliser la d\'emonstration du Th\'eor\`eme \ref{thm:modere}, il faut g\'en\'eraliser le 
Corollaire \ref{corr:invnulmod}.  
Il suffit donc de d\'efinir un diagramme comme \eqref{diag:grand} parce que les autres arguments sont une
simple chasse au diagramme transf\'erable au cas sauvage.  Soit donc $\GB=\SLb_1(A)$. 
On consid\`ere le 
diagramme suivant dont les colonnes sont exactes, et dont la ligne centrale est exacte (Corollaire \ref{corr:invnulsau}):
\begin{equation*} 
\xymatrix{
k^\times \ar[r] \ar[d]^{\cdot [A]}
& k(\GB)^\times  \ar[r]^-{\partial^1} \ar[d]^{\cdot [A_{k(\GB)}]}
& \bigoplus_{x\in \GB^{(1)}} \mathbb{Z}  \ar[d]^{\oplus_{x\in \GB^{(1)}} \cdot [A_{k(x)}]}
\\
 H^3_{p^n}(k) \ar[r] \ar[d]
& H^3_{p^n}(k(\GB)) \ar[r]^-{\partial^{3}} \ar[d]
& \bigoplus_{x\in \GB^{(1)}} H^2_{p^n}(k(x)) \ar[d]
\\ H^3_{p^n,A}(k) \ar[r] 
& H^3_{p^n,A}(k(\GB)) \ar[r]^-{\partial^{3}_{A}}
& \bigoplus_{x\in \GB^{(1)}} H^2_{p^n,A}(k(x)) ,
}
\end{equation*}
De nouveau, l'application $\partial^1$ est le morphisme diviseur, et parce que $\Pic(\GB)=0$ \cite[Lem. 6.9]{sansuc}, $\partial^1$
est surjective.
Il faut aussi noter que $\CH^2(G)=0$ puisque $\GB$ est d\'esormais une forme int\'erieure de $\SLb_m(k)$ avec $m=\deg_k(A)$  \cite{panin}.  La m\^eme chasse au diagramme et la m\^eme construction que dans le cas mod\'er\'e finissent la d\'emonstration.
\end{proof}

On peut donc en d\'eduire que l'invariant de Suslin se g\'en\'eralise.

\begin{corr}
Sous les m\^emes conditions que dans le Th\'eor\`eme \ref{thm:sauvage} l'invariant de
Suslin $\rho_{\text{Sus},B_K}$ induit un unique invariant dans  $\Inv^4(\SKb_1(A),\Hcal_{p^n,L,\Acal}^\ast)$ (satisfaisant 
la propri\'et\'e de rel\`evement),
que l'on appelle l'invariant de Suslin $\rho_{\text{Sus},A}$ de $A$.
\end{corr}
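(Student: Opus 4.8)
The plan is to deduce this corollary in one move from Th\'eor\`eme \ref{thm:sauvage} applied to the particular invariant $\rho':=\rho_{\text{Sus},B_K}$. The only point that is not purely formal is to check that Suslin's invariant, which a priori lives in $\Inv^4(\SKb_1(B_K),\Hcal^\ast_{p^n,\Bcal_K})$ --- recall $p^n=\ind_K(B_K)$ is invertible in $K$ since $\car(K)=0$, so $\rho_{\text{Sus},B_K}$ is defined by Suslin's construction --- actually factors through the sub-module of cycles $\Hcal^\ast_{p^n,L,\Bcal_K}$ of D\'efinition \ref{def:modcycsauvrel}, so that it is a legitimate input for Th\'eor\`eme \ref{thm:sauvage}.

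First I would use the property of Suslin's invariant recalled in \S\ref{sec:def}: for every field extension $F$ of $K$ and every $a\in\SKb_1(B_K)(F)$, a representative in $H^4_{p^n}(F)$ of $\rho_{\text{Sus},B_K,F}(a)$ lies in the kernel of the restriction $H^4_{p^n}(F)\to H^4_{p^n}(F(Y))$, where $Y$ is the Severi--Brauer variety of $B_K$. Since $\Lbar$ splits $A$ and $\Br(\Lbar)\hookrightarrow\Br(L)$ (\S\ref{sec:relevbase} \ref{sec:relevasc}), the field $L$ splits $B_K$; hence every field factor $F'$ of the finite $F$-algebra $F\otimes_K L$ also splits $B_K$, so $Y$ acquires an $F'$-rational point, which yields a specialisation $H^4_{p^n}(F(Y))\to H^4_{p^n}(F')$ compatible with restriction from $F$. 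Consequently $\ker(H^4_{p^n}(F)\to H^4_{p^n}(F(Y)))\subseteq\ker(H^4_{p^n}(F)\to H^4_{p^n}(F\otimes_K L))=H^4_{p^n,L}(F)$, so the chosen representative already lies in $H^4_{p^n,L}(F)$. Moreover $[B_K]$, hence $K_2(F)\cdot[B_K]$, restricts to zero over $F\otimes_K L$ by functoriality of the module structure, so $K_2(F)\cdot[B_K]\subseteq H^4_{p^n,L}(F)$; therefore the natural map $H^4_{p^n,L,B_K}(F)\to H^4_{p^n,B_K}(F)$ is injective and $\rho_{\text{Sus},B_K,F}(a)$ lies in its image. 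Functoriality being inherited from $\rho_{\text{Sus},B_K}$, this gives $\rho_{\text{Sus},B_K}\in\Inv^4(\SKb_1(B_K),\Hcal^\ast_{p^n,L,\Bcal_K})$.

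Then I would simply invoke Th\'eor\`eme \ref{thm:sauvage} with this $\rho'$: it produces a unique $\rho\in\Inv^4(\SKb_1(A),\Hcal^\ast_{p^n,L,\Acal})$ making the lifting diagram of that theorem commute, and one sets $\rho_{\text{Sus},A}:=\rho$. To see that this depends only on $A$, I would add a short remark on the auxiliary data: two Cohen triples associated to $k$, resp.\ to $\Lbar$, yield canonically isomorphic split exact sequences by Corollaire \ref{corr:cohencoh} and its relative analogue \eqref{eq:reskth}, so the resulting invariant is unchanged; and two choices of splitting field $\Lbar$ are dominated by a common one, through which the corresponding invariants are identified via the canonical injections between the modules $\Hcal^\ast_{p^n,L}$ (see the remarks after D\'efinition \ref{def:modcycsauv}).

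The only genuine content, hence the main (mild) obstacle, is the first step: verifying that $\rho_{\text{Sus},B_K}$ takes values in $\Hcal^\ast_{p^n,L,\Bcal_K}$. This rests entirely on Suslin's Severi--Brauer kernel statement together with the elementary fact that a split central simple algebra acquires a rational point on its Severi--Brauer variety; once that is in place, the corollary is a formality on top of Th\'eor\`eme \ref{thm:sauvage}.
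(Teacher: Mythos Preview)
Your proposal is correct, and the overall plan---apply Th\'eor\`eme \ref{thm:sauvage} once you know $\rho_{\text{Sus},B_K}$ lands in $\Hcal^\ast_{p^n,L,\Bcal_K}$---is exactly the paper's. The difference lies in how that landing is verified. The paper argues much more cheaply: it uses only the functoriality of Suslin's invariant together with the fact that $L$ splits $B_K$, so $\SK_1(B_L)=0$ and hence the image of $\rho_{\text{Sus},B_K}$ under restriction to $L$ (or to any $F\otimes_K L$) is zero; since $[B_K]$ also dies over $L$, this immediately identifies the target with the kernel defining $H^4_{p^n,L,B_K}$. Your route via the Severi--Brauer kernel property and a rational-point/specialisation argument is valid but heavier: it invokes a finer structural fact about Suslin's invariant where the trivial vanishing $\SK_1(\text{split})=0$ already suffices. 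Your extra remarks on independence of the Cohen triple and of the choice of $\Lbar$ are a nice addition that the paper leaves implicit.
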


\begin{proof}
Il faut d\'emontrer que $\rho=\rho_{\text{Sus},B_K}$ a ses valeurs dans $\Hcal_{p^n,L,B_K}^4$.
Ceci suit du diagramme commutatif
\[
\xymatrix{ 
\SK_1(B_K) \ar[r]^{\rho_{K}} \ar[d] &  H^4_{p^n,B_K}(K) \ar[d] \\
\SK_1(B_L) \ar[r]^{\rho_{L}} & H^4_{p^n,B_K}(L)
}
\]
et de la trivialit\'e de $\SK_1(B_L)$ ($L$ d\'eploie $B_K$).
\end{proof}

\subsection{Le cas g\'en\'eral} \label{sec:relevgen}

Soient $k$ un corps de caract\'eristique $p>0$ et $A$ une $k$-alg\`ebre simple centrale d'indice arbitraire.
On sait qu'il existe une
$k$-alg\`ebre \`a division $D$, unique \`a isomorphisme pr\`es, qui est Brauer-\'equivalente \`a $A$.  Le th\'eor\`eme de
d\'ecomposition 
de Brauer \cite[Prop. 4.5.16]{gilleszam} donne des $k$-alg\`ebres \`a division $D_{p_i}$ ($1\leq i \leq s$) de $\ind_k(D_{p_i})=p_i^{n_i}$ ($p_i$ des premiers diff\'erents) tel que 
$D \cong D_{p_1} \otimes \ldots \otimes D_{p_s}$.  Cet isomorphisme
implique un isomorphisme (ibid, Ch. 4, Ex. 9)
\begin{equation} \label{eq:sk1deco}
\SKb_1(A) \cong \SKb_1(D) \cong \SKb_1(D_{p_1}) \oplus \ldots \oplus \SKb_1(D_{p_s}).
\end{equation}
Mettons $D_{\not p} = \bigotimes_{p_i\neq p} D_{p_i}$.  On a donc
a fortiori un isomorphisme $\SKb_1(A) \cong \SKb_1(D_p) \oplus \SKb_1(D_{\not p})$.  Posons aussi $A_p:=D_p$ et $A_{\not p}:=D_{\not p}$.  

Afin de d\'efinir l'invariant de Suslin en toute g\'eneralit\'e, nous allons coller l'invariant mod\'er\'e (Th\'eor\`eme 
\ref{thm:modere}) et
sauvage (Th\'eor\`eme \ref{thm:sauvage}) avec cet isomorphisme de $\SKb_1(A)$. 
Il faut donc recoller aussi les deux modules de cycles afin d'obtenir le suivant.

\begin{defin} \label{def:gen}
Soient $(K,R,k)$ un $p$-triplet de Cohen,
 $A$ une $K$-alg\`ebre simple centrale de $\ind_K(A)=r=p^nm$ ($p\nmid m$)
et $\overline{A}$ la $k$-alg\`ebre simple centrale r\'esiduelle.  Soit $\Lbar$ une extension finie
galoisienne de $k$ tel qu'elle est un corps de d\'ecomposition de $\overline{A}_p$ et $(L,S,\Lbar)$ un
$p$-triplet de Cohen associ\'e. 
On d\'efinit le module de 
cycles suivant de base $R$: 
\[ \Hcal_{r,L,\mathcal{A}}^\ast:= \Hcal_{m,\mathcal{A}_{\not p}}^\ast \oplus \Hcal_{p^n,L,\mathcal{A}_p}^\ast.
\]
\end{defin}

Utilisant les Th\'eor\`emes \ref{thm:modere} et \ref{thm:sauvage}, on obtient le th\'eor\`eme suivant.

\begin{thm} \label{cor:gen}
Soient $k$ un corps de $\car(k)=p>0$, $A$ une $k$-alg\`ebre simple centrale de $\ind_k(A)=r$ et $\Lbar$ une extension
finie de $k$ qui d\'eploie $A_p$.  
Soient $(K,R,k)$ un triplet de Cohen associ\'e \`a $k$ et $(L,S,\Lbar)$ un triplet de Cohen associ\'ee \`a $\Lbar$.  Soient  
$B$ la $R$-alg\`ebre d'Azumaya relev\'ee et $\rho'\in \Inv^4\left(\SKb_1(B_K),\Hcal_{r,L,\Bcal_K}^\ast\right)$.
Alors, il existe un unique $\rho \in \Inv^4\left(\SKb_1(A),\Hcal_{r,L,\Acal}^\ast\right)$, que l'on appelle \textit{l'invariant sp\'ecialis\'e
de $\rho'$},
tel que pour toute extension de Cohen $(K',R',k')$ de $(K,R,k)$
le diagramme suivant commute:
\begin{equation} \label{diag:gen}
\xymatrix{ 
\SKb_1(A)(k')  \ar[r]^{\rho_{k'}} & H^4_{r,L,A}(k') \ar[d] \\
\ar[u]_{\cong} \SKb_1(B_K)(K') \ar[r]_{\rho'_{K'}} 
 & H^4_{r,L,B_K} (K').
}
\end{equation}
\end{thm}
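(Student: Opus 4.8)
The plan is to re-run the proof of Th\'eor\`eme~\ref{thm:modere} essentially verbatim, but with the combined module of cycles $\Hcal_{r,L,\Acal}^\ast=\Hcal_{m,\Acal_{\not p}}^\ast\oplus\Hcal_{p^n,L,\Acal_p}^\ast$ of D\'efinition~\ref{def:gen} in place of the tame one. First I would check that this direct sum is again a module of cycles of base $R$: the data D1--D4 and the rules of \S\ref{sec:defgenmodcyc} are inherited componentwise from the two summands, which are modules of cycles of base $R$ by \S\ref{sec:exmodcyc} and D\'efinition~\ref{def:modcycsauvrel}. Write $B_{K,p}$ and $B_{K,\not p}$ for the $p$-primary and prime-to-$p$ components of the division algebra Brauer-equivalent to $B_K$; since $B$ lifts $A$, the residue isomorphism $\Br(R)\xrightarrow{\ \cong\ }\Br(k)$ respects primary decomposition, so the residual algebra of $B_{K,p}$ is Brauer-equivalent to $A_p$, and $L$ splits it because $\Br(\Lbar)\to\Br(L)$ is injective (\S\ref{sec:relevasc}); hence D\'efinition~\ref{def:gen} applies to $B_K$ as well, and $\Hcal_{r,L,\Acal}^\ast$, $\Hcal_{r,L,\Bcal_K}^\ast$ are the $k$- and $K$-restrictions of one module of cycles of base $R$, exactly as in the Remarque following Th\'eor\`eme~\ref{thm:sauvage}. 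In particular, for a mixed-characteristic valuation triplet its residue fits into a split exact sequence
\[ 0\to H^{i+1}_{r,L,A}(k)\to H^{i+1}_{r,L,B_K}(K)\xrightarrow{\ \partial^{i+1}\ } H^{i}_{r,L,A}(k)\to 0, \]
the direct sum of the sequences \eqref{eq:suitemod} and \eqref{eq:suitesauvl} attached to $B_{K,\not p}$ and $B_{K,p}$; its splitting is still effected by the canonical uniformizer $p$, so the analogue of Corollaire~\ref{corr:cohencoh} holds for $\Hcal_{r,L,\Acal}^\ast$.

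Second, imitating the construction in the proof of Th\'eor\`eme~\ref{thm:modere}: for a field extension $k'/k$ I would choose a Cohen extension $(K',R',k')$ of $(K,R,k)$, use the isomorphism $\pi\colon\SKb_1(B_K)(K')\xrightarrow{\ \cong\ }\SKb_1(A)(k')$ of Corollaire~\ref{corr:sk1iso}, and set $\tilde{\rho}_{k'}:=\partial^4\circ\rho'_{K'}\circ\pi^{-1}\colon\SKb_1(A)(k')\to H^3_{r,L,A}(k')$. Independence of the choice of Cohen extension follows from the combined analogue of Corollaire~\ref{corr:cohencoh} together with the functoriality of $\rho'$ and of $\pi$, so this yields an invariant $\tilde{\rho}\in\Inv^3\bigl(\SKb_1(A),\Hcal_{r,L,\Acal}^\ast\bigr)$.

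Third, I would show $\tilde{\rho}=0$. By Lemme~\ref{prop:sksl} it suffices that $\Inv^3\bigl(\SLb_1(A),\Hcal_{r,L,\Acal}^\ast\bigr)=0$; since an invariant with values in a direct sum is the same datum as a pair of invariants with values in the summands, this group splits as $\Inv^3\bigl(\SLb_1(A),\Hcal_{m,\Acal_{\not p}}^\ast\bigr)\oplus\Inv^3\bigl(\SLb_1(A),\Hcal_{p^n,L,\Acal_p}^\ast\bigr)$. Now $\SLb_1(A)$ is smooth, semisimple, simply connected and absolutely $k$-simple (an inner form of $\SLb_{\deg_k A}$; the case $A=k$ is trivial), with $\Pic(\SLb_1(A))=0$ and $\CH^2(\SLb_1(A))=0$ by Panin. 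So the first summand vanishes by Corollaire~\ref{corr:invnulmod} (using $m\in k^\times$) and the second vanishes by the diagram chase carried out in the proof of Th\'eor\`eme~\ref{thm:sauvage}, whose hypotheses are now all met. Hence $\tilde{\rho}=0$.

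Finally, $\tilde{\rho}=0$ means that $\rho'_{K'}(a)$ lies in $\ker\partial^4$, i.e. in the image of the injection $H^4_{r,L,A}(k')\hookrightarrow H^4_{r,L,B_K}(K')$, for every $a\in\SKb_1(B_K)(K')$; setting $\rho_{k'}(\pi(a))$ equal to its unique preimage gives, by functoriality of the split exact sequence and independence of the Cohen extension, an invariant $\rho\in\Inv^4\bigl(\SKb_1(A),\Hcal_{r,L,\Acal}^\ast\bigr)$ for which \eqref{diag:gen} commutes, and uniqueness is immediate from the injectivity of the right-hand vertical arrow of \eqref{diag:gen}. I expect the only genuine work to be in the first paragraph, namely verifying that the $p$-primary part of $B_K$ has residual algebra $A_p$ split by $L$, so that the two relative modules of cycles really glue into a single module of cycles of base $R$ carrying the residue sequence above; once that bookkeeping is settled, the rest is the already-established machinery of Th\'eor\`emes~\ref{thm:modere} and~\ref{thm:sauvage} applied without change.
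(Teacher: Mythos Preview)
Your argument is correct. The paper itself gives no detailed proof here, simply asserting that the statement follows from Th\'eor\`emes~\ref{thm:modere} and~\ref{thm:sauvage}; the surrounding discussion (the decomposition~\eqref{eq:sk1deco} and the sentence about ``coller l'invariant mod\'er\'e et sauvage'') indicates that the intended route is to decompose both $\SKb_1(B_K)\cong\SKb_1(B_{K,p})\oplus\SKb_1(B_{K,\not p})$ and the target module, observe that the off-diagonal pieces vanish because $\SKb_1(A_p)$ is $p$-power torsion while $\Hcal_{m,\Acal_{\not p}}^\ast$ is $m$-torsion (and conversely), and then apply Th\'eor\`eme~\ref{thm:modere} to the prime-to-$p$ diagonal block and Th\'eor\`eme~\ref{thm:sauvage} to the $p$-primary diagonal block as black boxes.

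Your route differs in that you keep $\SKb_1(A)$ undivided and re-run the specialisation argument once with the combined module of cycles. This obliges you to check that Corollaire~\ref{corr:invnulmod} and the diagram chase in the proof of Th\'eor\`eme~\ref{thm:sauvage} still apply when $\GB=\SLb_1(A)$ has index $r$ while the algebra appearing in the relative module is $A_{\not p}$ or $A_p$ --- and indeed they do, since neither argument ties $\GB$ to the algebra defining the module beyond the semisimple simply connected, $\Pic(\GB)=0$ and $\CH^2(G)=0$ hypotheses, all of which hold for any $\SLb_1$. The paper's decomposition approach is more modular (the earlier theorems are invoked verbatim, with the torsion remark disposing of cross-terms); yours is more uniform and makes the gluing automatic, at the cost of revisiting the proofs of the vanishing results in a marginally more general setting.
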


En g\'en\'eral on sait donc d\'efinir un invariant de Suslin de $\SKb_1(A)$.

\begin{corr} \label{cor:susgen}
Sous les m\^emes conditions que dans le Th\'eor\`eme \ref{cor:gen}, l'invariant de
Suslin $\rho_{\text{Sus},B_K}$ induit un unique invariant dans $\Inv^4(\SKb_1(A),\Hcal^\ast_{r,L,A})$ (satisfaisant
la propri\'et\'e de rel\`evement),
ce que l'on appelle l'invariant de Suslin $\rho_{\text{Sus},A}$ de $A$.
\end{corr}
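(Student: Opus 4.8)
The plan is to obtain the corollary by applying Theorem~\ref{cor:gen} to $\rho'=\rho_{\text{Sus},B_K}$, the Suslin invariant of the central simple $K$-algebra $B_K$; this invariant is available because $\car(K)=0$, so $\ind_K(B_K)=r\in K^\times$. The only thing that genuinely has to be checked is that the values of $\rho_{\text{Sus},B_K}$ already lie in the sub-module of cycles
\[ \Hcal^\ast_{r,L,\Bcal_K}=\Hcal^\ast_{m,(\Bcal_K)_{\not p}}\oplus\Hcal^\ast_{p^n,L,(\Bcal_K)_p} \]
of $\Hcal^\ast_{r,\Bcal_K}$ (notation of Definition~\ref{def:gen}, with $r=p^nm$ and $p\nmid m$); granting this, Theorem~\ref{cor:gen} immediately yields the specialised invariant $\rho_{\text{Sus},A}\in\Inv^4(\SKb_1(A),\Hcal^\ast_{r,L,\Acal})$, unique and fitting into the lifting diagram~\eqref{diag:gen}, and there is nothing further to do.

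To verify this inclusion I would go through the primary decomposition. Since $\ZZ/r\ZZ\cong\ZZ/m\ZZ\times\ZZ/p^n\ZZ$ and $\car(K)=0$, each field extension $F$ of $K$ carries a functorial splitting $H^4_{r,B_K}(F)\cong H^4_{m,(B_K)_{\not p}}(F)\oplus H^4_{p^n,(B_K)_p}(F)$, compatible with the decomposition~\eqref{eq:sk1deco} of $\SKb_1(B_K)$; under~\eqref{eq:sk1deco} the Suslin invariant $\rho_{\text{Sus},B_K}$ itself decomposes as $\rho_{\text{Sus},(B_K)_p}\oplus\rho_{\text{Sus},(B_K)_{\not p}}$, its construction respecting primary components. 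The summand $\Hcal^\ast_{m,(\Bcal_K)_{\not p}}$ carries no kernel condition relative to $L$, so it remains only to show that $\rho_{\text{Sus},(B_K)_p}$ takes its values in $\Hcal^4_{p^n,L,(\Bcal_K)_p}$, that is, in $\ker\bigl(H^4_{p^n,(B_K)_p}(F)\to H^4_{p^n,(B_K)_p}(F\otimes_K L)\bigr)$ for every $F$.

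This last point I would settle as in the corollary following Theorem~\ref{thm:sauvage}. By hypothesis $\Lbar$ splits $A_p$, hence --- exactly as in that corollary, using the injectivity of $\Br(\Lbar)\to\Br(L)$ (\S~\ref{sec:relevbase}~\ref{sec:relevasc}) --- $L$ splits $(B_K)_p$. Thus for every field $E$ receiving a $K$-morphism from $L$ the algebra $(B_K)_p\otimes_K E$ is split, so $\SKb_1((B_K)_p)(E)=1$; in particular this holds for each residue field of the \'etale $F$-algebra $F\otimes_K L$. Naturality of $\rho_{\text{Sus},(B_K)_p}$ along the base change $F\to F\otimes_K L$ then gives a commutative square whose lower-left corner is trivial, which forces the image of $(\rho_{\text{Sus},(B_K)_p})_F$ to be annihilated by restriction to $F\otimes_K L$. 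This proves $\rho_{\text{Sus},B_K}\in\Inv^4(\SKb_1(B_K),\Hcal^\ast_{r,L,\Bcal_K})$; Theorem~\ref{cor:gen} then applies verbatim, and uniqueness is part of its statement.

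I expect the only real obstacle to be the bookkeeping around the primary decomposition: one must make sure that Suslin's invariant is compatible with the splitting $\SKb_1\cong\SKb_1((-)_p)\oplus\SKb_1((-)_{\not p})$ and with the matching direct-sum decomposition of the target module of cycles, and that $F\otimes_K L$ --- only an \'etale $F$-algebra and not a field --- is read everywhere as $\spec$ of that algebra, equivalently as the finite family of its residue fields, in keeping with Definition~\ref{def:modcycsauv} and \S~\ref{sec:sauvagecoh}. Neither is a serious difficulty: the first is inherent in the construction of $\rho_{\text{Sus}}$ and was already used implicitly in the wild case, and the second is a routine compatibility already present in \S~\ref{sec:sauvagecoh}.
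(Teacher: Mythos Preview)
Your proposal is correct and matches the paper's approach. The paper gives no proof for Corollary~\ref{cor:susgen}, treating it as immediate from Theorem~\ref{cor:gen} together with the proof of the analogous corollary in the wild case (the one following Theorem~\ref{thm:sauvage}); your argument spells out precisely this reduction --- splitting along the primary decomposition and then applying to the $p$-part the commutative square used there, with $\SK_1((B_K)_p\otimes_K L)$ trivial because $L$ splits $(B_K)_p$.
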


\section{Remarques finales} \label{sec:rem}

Finissons ce texte par donner quelques remarques.
Notons tout d'abord que Knus-Merkurjev-Rost-Tignol ont \'egalement d\'efini un invariant de $\SKb_1(A)$ pour
une alg\`ebre de biquaternions $A$ en caract\'eristique quelconque \cite[\SS 17]{kmrt}.  L'auteur rapporte que les deux invariants
sont li\'es.  Il \'etudiera ce sujet dans un prochain papier \cite{wsusbiquat} utilisant le fait qu'ils sont les m\^emes dans le cas
mod\'er\'e \cite[Notes \S 17]{kmrt}, \cite[Thm. 4]{suslin}.
Commentons maintenant sur un autre point de vue sur la
construction et puis sur quelques r\'eflexions de la conjecture de Suslin.

\subsection{Autre point de vue} \label{sec:pointvue}

Il y a une autre fa\c{c}on de regarder \`a la construction utilisant les groupes $A^i, \tilde{A}^0$ et $A^0_{\text{mult}}$
de \S \ref{sec:def} \ref{sec:gersten} et \S \ref{sec:lieninv} \ref{sec:lienmerk}.
Soient $(K,R,k)$ un $p$-triplet de Cohen, $A$ une $k$-alg\`ebre simple centrale de $\ind_k(A)=p^n$, $B$ la $R$-alg\`ebre d'Azumaya
relev\'ee,  $(L,S,\Lbar)$ une extension de Cohen finie galoisienne de $(K,R,k)$ tel que $\Lbar$ deploie $A$ et
$\Hcal^\ast:=\Hcal_{n,L,B_K}^\ast$ le module de cycles avec base
$R$ de la D\'efinition \ref{def:modcycsauvrel}.
Notons $\GBcal:=\SLb_1(B)$ qui est d\'efini,
de fa\c{c}on analogue que $\SLb_1(B_K)$, avec une norme r\'eduite sur $B$ induit par un scindage $B\otimes_R S \cong M_m(S)$ -- voir \cite[Ch. III, \SS 1]{Knus} pour
plus de d\'etails.  La fibre g\'en\'erique $\GBcal_K=\SLb_1(B_K)$ est un ouvert de $\GBcal$. Le ferm\'e compl\'ementaire $Z$ 
est l'image de la fibre sp\'eciale $\GB=\SLb_1(A)$ dans $\GBcal$ sous l'immersion des sch\'emas $\psi:\GB \to \GBcal$. 
Pour tout entier $i\geq 0$, les points de $Z$ de codimension $i+1$ correspondent sous $\psi$  aux points de codimension $i$ dans $\GB$.  De m\^eme fa\c{c}on, $\spec(K)$ est un ouvert de $\spec(R)$ avec ferm\'e compl\'ementaire l'image de $\spec(k)$.
La suite de localisation de Rost \cite[\S 5]{Rostmodcyc} donne donc des
suites exactes:
\begin{equation} \label{diag:a0s}
\xymatrix{
0 \ar[r] & A^0(R, \Hcal^4) \ar[r] \ar[d] &  A^0(K, \Hcal^4) \ar[r] \ar[d] & A^0(k, \Hcal^3) \ar[d] \ar[r] & 0 \\
0 \ar[r] &
 A^0(\GBcal,\Hcal^4) \ar[r] &
A^0(\GBcal_K,\Hcal^4) \ar[r]&
A^0(\GB,\Hcal^3) \ar[r] & \ldots
}
\end{equation}
Le Corollaire \ref{corr:invnulmod} et le Corollaire \ref{corr:invnulsau} (\'etant g\'en\'eralis\'e \`a $\Hcal^\ast$ dans la d\'emonstration du Th\'eor\`eme \ref{corr:invnulsau}) induiquent que $\tilde{A}^0(\GB,\Hcal^3)$ est trivial. 
\`A base du diagramme \eqref{diag:a0s}, le lemme du serpent donne donc un isomorphisme
\[\tilde{A}^0(\GBcal_K,\Hcal^4) \cong
\tilde{A}^0(\GBcal,\Hcal^4)\]
qui respecte les \'el\'ements multiplicatifs.  C'est-\`a-dire d\^u \`a l'isomorphisme de Merkurjev \eqref{eq:invarmerk}, on a \'egalement un isomorphisme
\[ \Inv^4(\GBcal_K,\Hcal^\ast) \cong \tilde{A}^0(\GBcal,\Hcal^4)_\text{mult}. \]
Le groupe \`a droite est d\'efini de fa\c{c}on pareille que pour des
groupes alg\'ebriques dans \S \ref{sec:lieninv} \ref{sec:lienmerk}.
Puisque $\Hcal^\ast$ a base $R$, le morphisme  $\GB \to \GBcal$ de sch\'emas donne \'egalement un 
morphisme
\[ A^0(\GBcal,\Hcal^4) \to A^0(\GB,\Hcal^4)\]
qui donne de la m\^eme fa\c{c}on un morphisme d'invariants
\begin{equation} \label{eq:incques}
 \tilde{A}^0(\GBcal,\Hcal^4)_\text{mult} \to \Inv^4(\GB,\Hcal^\ast).
\end{equation}
Au total, on obtient un diagramme
\[ 
\xymatrix{
\Inv^4(\SKb_1(B_K),\Hcal^\ast) \quad \ar@{^(->}[r]^{\quad \ \pi} \ar@{.>}[d] &  \quad \Inv^4(\GBcal_K,\Hcal^\ast) \ar[d]^\varphi \\
\Inv^4(\SKb_1(A),\Hcal^\ast) \quad \ar@{^(->}[r] & \quad  \Inv^4(\GB,\Hcal^\ast)
}
\]
qui induit l'existence de la fl\^eche point\'ee.  En effet, soient $\rho\in \Inv^4(\SKb_1(B_K),\Hcal^\ast)$ et $(F,S,\Fbar)$ une extension de Cohen de $(K,R,k)$, alors
$(\varphi\circ \pi (\rho))_{\Fbar}$ envoie les commutateurs de $A^\times_{\Fbar}$ \`a 0 puisqu'il correspondent aux commutateurs de $B_F^\times$ 
gr\^ace \`a l'isomorphisme $\SKb_1(A)(\Fbar)\cong \SKb_1(B_K)(F)$ (Corollaire \ref{corr:sk1iso}).  

Dans les Th\'eor\`emes
\ref{thm:sauvage} et \ref{cor:gen}, nous avons aussi construit ce morphisme point\'e de groupes d'invariants mais d'une fa\c{c}on
plus explicite.

\subsection{Conjecture de Suslin} \label{sec:suslin}

Nous finissons ce texte donc par quelques remarques autour de la conjecture de Suslin.
Commen\c{c}ons par rappeler la conjecture.

\begin{conj}[Suslin \cite{suslinconj}]
Soient $k$ un corps et $A$ une $k$-alg\`ebre simple centrale.  Alors,
$\SKb_1(A)=0$ si est seulement si $\ind_k(A)$ est sans facteurs carr\'es.
\end{conj}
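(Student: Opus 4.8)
Le plan est de traiter s\'epar\'ement les deux implications de l'\'equivalence. Le sens \og facile\fg\ --- si $\ind_k(A)$ est sans facteurs carr\'es, alors $\SKb_1(A)=0$ --- est le th\'eor\`eme de Wang \cite{wang}: la d\'ecomposition \eqref{eq:sk1deco} ram\`ene au cas d'une alg\`ebre \`a division d'indice premier, o\`u la trivialit\'e de $\SK_1$ est classique, et rien de neuf n'est requis. Tout le contenu r\'eside donc dans la r\'eciproque: si $\ind_k(A)$ n'est pas sans facteurs carr\'es, il faut exhiber une extension de corps $F$ de $k$ telle que $\SK_1(A_F)\neq 0$. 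Par \eqref{eq:sk1deco}, $\SKb_1(A)=0$ \'equivaut \`a la nullit\'e de chaque $\SKb_1(D_{p_i})$; comme $\ind_k(D_{p_i})=p_i^{n_i}$, l'hypoth\`ese fournit un indice $i$ avec $n_i\geq 2$, et il suffit de montrer $\SKb_1(D_{p_i})\neq 0$. On se ram\`ene ainsi au cas $\ind_k(A)=p^n$ avec $n\geq 2$, et m\^eme, d'apr\`es \cite[Thm. 0.19]{rehmanea}, au cas o\`u $A$ est une alg\`ebre \`a division cyclique.

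La strat\'egie propos\'ee est d'utiliser l'invariant de Suslin g\'en\'eralis\'e (Corollaire \ref{cor:susgen}) comme d\'etecteur. On se place sur un corps $F$ assez g\'en\'erique --- par exemple le corps des fonctions d'une vari\'et\'e classifiante adapt\'ee ---, on choisit un \'el\'ement $a\in\SK_1(A_F)$ aussi explicite que possible (ce que facilite la forme cyclique de $A$), et l'on cherche \`a d\'emontrer que $\rho_{\text{Sus},A}(a)\neq 0$ dans $H^4_{p^n,L,A}(F)$. Par fonctorialit\'e de l'invariant en le corps, ceci entra\^ine $a\neq 0$, donc $\SKb_1(A)\neq 0$. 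En caract\'eristique $0$ avec $4\mid\ind_k(A)$, c'est pr\'ecis\'ement le th\'eor\`eme de Merkurjev \cite{mersuslinbiquat}, obtenu par un calcul d\'elicat avec l'alg\`ebre de biquaternions g\'en\'erique; le cas des biquaternions en caract\'eristique $p$ s'en d\'eduit alors par le Corollaire \ref{cor:susgen} et la non-nullit\'e du rel\`evement.

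L'obstacle principal est ce calcul de non-nullit\'e dans toute sa g\'en\'eralit\'e. Pour $p$ impair, ou pour $p^n$ avec $n\geq 3$, aucun analogue du calcul de Merkurjev n'est connu, et les groupes $H^4_{p^n,L,A}$ de diff\'erentielles logarithmiques de la Section \ref{sec:sauvage} sont nettement moins maniables que la cohomologie galoisienne. Un but interm\'ediaire naturel, rendu possible par le Th\'eor\`eme \ref{cor:gen}, est de relever la question en caract\'eristique $0$: puisque $\rho_{\text{Sus},A}$ est l'invariant sp\'ecialis\'e de $\rho_{\text{Sus},B_K}$ et que les r\'esidus de \eqref{eq:reskth} sont des injections scind\'ees, la non-trivialit\'e de $\rho_{\text{Sus},B_K}$ sur un \'el\'ement relev\'e redescendrait \`a $\rho_{\text{Sus},A}$; le cas sauvage de la conjecture se ram\`enerait ainsi, pour la part d\'etect\'ee par l'invariant, au cas de caract\'eristique $0$ pour l'alg\`ebre relev\'ee $B_K$, o\`u l'on dispose du th\'eor\`eme de Voevodsky--Rost--Weibel et de l'arsenal motivique. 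Il resterait malgr\'e tout \`a \'etablir que cet invariant d\'ecel\`e \emph{toute} la non-trivialit\'e de $\SK_1$: $\rho_{\text{Sus},A}$ pourrait \^etre trivial alors que $\SK_1(A)$ ne l'est pas, auquel cas il faudrait recourir \`a d'autres invariants ou \`a des arguments ne passant pas par la cohomologie --- une difficult\'e ouverte d\'ej\`a en caract\'eristique $0$.
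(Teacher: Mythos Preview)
L'\'enonc\'e est une \emph{conjecture}, et le papier ne la d\'emontre pas: la Section~\ref{sec:suslin} se borne \`a la rappeler, \`a poser la Question~\ref{qu:invtriv} (version forte), et \`a discuter les r\'eductions connues. Ton texte n'est pas non plus une d\'emonstration, et tu le reconnais honn\^etement: c'est une discussion de strat\'egie qui recouvre fid\`element ce que le papier dit --- th\'eor\`eme de Wang pour un sens, d\'ecomposition~\eqref{eq:sk1deco}, r\'eduction \`a l'indice $p^2$ (Blanchet), r\'eduction aux alg\`ebres cycliques (Rehman--Tikhonov--Yanchevski\u{\i}), r\'esultat de Merkurjev pour $4\mid\ind_k(A)$, et usage de l'invariant g\'en\'eralis\'e comme d\'etecteur.

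Un point m\'erite n\'eanmoins d'\^etre pr\'ecis\'e. Tu \'ecris que, via le diagramme de sp\'ecialisation, \og le cas sauvage de la conjecture se ram\`enerait [\ldots] au cas de caract\'eristique~$0$ pour l'alg\`ebre relev\'ee $B_K$\fg. Le papier est plus prudent: la Proposition~\ref{prop:susred} n'\'etablit que l'implication \emph{de la caract\'eristique~$p$ vers la caract\'eristique~$0$}, et la remarque qui la suit souligne explicitement que la r\'eciproque est ouverte. La raison est structurelle: le diagramme~\eqref{diag:gen} ne relie $\SKb_1(A)(k')$ qu'aux $\SKb_1(B_K)(K')$ pour $K'$ une extension de \emph{Cohen} de $K$. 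Savoir que $\rho_{\text{Sus},B_K}$ est non trivial comme invariant en caract\'eristique~$0$ signifie qu'il est non nul sur une certaine extension $F$ de $K$, mais rien ne garantit que $F$ soit de la forme requise. Ta phrase \og sur un \'el\'ement relev\'e\fg\ laisse entendre que tu vois cette contrainte, mais la conclusion que tu en tires (r\'eduction du cas sauvage au cas de caract\'eristique~$0$) est trop optimiste: c'est pr\'ecis\'ement la question d'injectivit\'e du morphisme~\eqref{eq:incques} que le papier laisse ouverte.
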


Comme mentionn\'ee dans l'introduction, c'est une question de n\'ecessit\'e, puisque Wang
a d\'emontr\'e que pour une alg\`ebre simple centrale $A$, le groupe de Whitehead r\'eduit $\SKb_1(A)$ est
trivial si $\ind_k(A)$ est sans facteurs carr\'es \cite{wang}.  D\^u \`a la d\'ecomposition \eqref{eq:sk1deco}, il suffit de traiter le cas o\`u 
l'indice de $A$ est $p$-primaire pour un premier $p$.  De plus, par la formule de r\'eduction d'indice, il suffit de d\'emontrer que $\SKb_1(A)\neq 0$ si $\ind_k(A)=p^2$ ($p$ premier)\cite[Prop. 4]{blanchet}.

Nous pouvons ajouter une question sur l'invariant de Suslin.

\begin{ques} \label{qu:invtriv}
Soient $k$ un corps et $A$ une $k$-alg\`ebre simple centrale de $\ind_k(A)$ contenant un facteur carr\'e.  Est-ce
que l'invariant de Suslin $\rho_{\text{Sus},A}$ est non trivial?
\end{ques}

\begin{rem}
Bien \'evidemment, une r\'eponse affirmative \`a cette question
impliquerait la conjecture de Suslin.  Alors, on pourrait appeler cette question la version \textit{forte} de la conjecture de Suslin.  
\end{rem}

De nouveau, par la formule de r\'eduction de l'indice, il suffit de r\'epondre \`a la question pour des $k$-alg\`ebres simples centrales
$A$ de $\ind_k(A)=p^2$ ($p$ premier).

 Merkurjev a d\'emontr\'e que la conjecture de Suslin vaut pour
des alg\`ebres simples centrales avec indice divisible par 4 (par exemple
une $k$-alg\`ebre de biquaternions) \cite{mersuslinbiquat}.  Puisque l'invariant
de Suslin pour des biquaternions est injectif dans le cas mod\'er\'e \cite[Thm. 3]{suslin}, il est par construction aussi injectif dans le cas sauvage.  Alors, on obtient que pour
une $k$-alg\`ebre simple centrale $A$ de $\ind_k(A)$ divisible par 4, $\rho_{\text{Sus,A}}$ n'est pas trivial, ind\'ependamment
de $\car(k)$ (en particulier pour les caract\'eristiques sauvages, comme 2).

R\'ecemment Rehman-Tikhonov-Yanchevski\u{\i} ont en plus d\'emontr\'e qu'il suffit de v\'erifier la conjecture de Suslin
pour des alg\`ebres \`a division cycliques.  Il suffit m\^eme de d\'emontrer la conjecture pour une classe d'alg\`ebres \`a division cycliques \'el\'ementaires (des produits tensoriels de deux alg\`ebres cycliques de Dickson) \cite[Thm 0.19 - 0.20]{rehmanea}.

En utilisant des rel\`evements d'alg\`ebres simples centrales de la caract\'eristique 
positive \`a la caract\'eristique 0 comme dans \S \ref{sec:relevbase} \ref{sec:relevasc},
on obtient l'\'enonc\'e de comparaison suivant.  

\begin{prop} \label{prop:susred}
Soient $(K,R,k)$ un $p$-triplet de Cohen, $A$ une $k$-alg\`ebre simple centrale et $B$ la $R$-alg\`ebre
d'Azumaya relev\'ee.  Si la conjecture de Suslin (forte) vaut pour $A$, alors elle vaut aussi pour $B_K$.
\end{prop}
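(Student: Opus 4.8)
Le plan est de transf\'erer la non-trivialit\'e le long des isomorphismes d\'ej\`a \'etablis, sans refaire de calcul. On commence par rappeler que la $R$-alg\`ebre d'Azumaya relev\'ee $B$ a m\^eme indice et m\^eme degr\'e que $A$ (\S \ref{sec:relevbase} \ref{sec:relevasc}), de sorte que $\ind_K(B_K)=\ind_k(A)$ ; en particulier, $\ind_K(B_K)$ contient un facteur carr\'e si et seulement si $\ind_k(A)$ en contient un. Pour la version faible de la conjecture, il suffit alors d'invoquer le Corollaire \ref{corr:sk1iso} : l'isomorphisme $\SKb_1(B_K)(K)\cong\SKb_1(A)(k)$ montre que l'un des deux groupes est nul si et seulement si l'autre l'est, ce qui, combin\'e \`a l'\'egalit\'e des indices, donne la conclusion.

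Pour la version forte, on suppose $\ind_K(B_K)$ divisible par un carr\'e ; alors $\ind_k(A)$ l'est aussi, et l'hypoth\`ese (conjecture de Suslin forte pour $A$) fournit une extension de corps $k'$ de $k$ et un \'el\'ement $a\in\SKb_1(A)(k')$ tel que $\rho_{\text{Sus},A,k'}(a)\neq 0$. On choisit, gr\^ace au Th\'eor\`eme \ref{thm:cohen} (\ref{thm:coheni}), un $p$-triplet de Cohen $(K',R',k')$ qui est une extension de Cohen de $(K,R,k)$ ; en particulier $K'/K$ est une extension de corps. La fonctorialit\'e du Corollaire \ref{corr:sk1iso} pour les extensions de Cohen (voir la remarque qui le suit) permet de relever $a$ en l'\'el\'ement correspondant $b\in\SKb_1(B_K)(K')$.

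Il reste \`a invoquer la propri\'et\'e de rel\`evement de l'invariant de Suslin. D'apr\`es le Corollaire \ref{cor:susgen}, $\rho_{\text{Sus},A}$ est l'invariant sp\'ecialis\'e de $\rho_{\text{Sus},B_K}$, donc le diagramme \eqref{diag:gen} commute avec $\rho=\rho_{\text{Sus},A}$ et $\rho'=\rho_{\text{Sus},B_K}$. Sa fl\`eche verticale de droite $\iota\colon H^4_{r,L,A}(k')\to H^4_{r,L,B_K}(K')$ est la somme directe de l'injection de la suite exacte scind\'ee \eqref{eq:suitemod} et de son analogue sauvage \eqref{eq:reskth}, donc elle est injective. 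La commutativit\'e du diagramme donne alors $\iota\bigl(\rho_{\text{Sus},A,k'}(a)\bigr)=\rho_{\text{Sus},B_K,K'}(b)$, d'o\`u $\rho_{\text{Sus},B_K,K'}(b)\neq 0$, et donc $\rho_{\text{Sus},B_K}$ est non trivial.

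Je ne m'attends pas \`a une v\'eritable difficult\'e : tout repose sur des r\'esultats d\'ej\`a acquis (Corollaires \ref{corr:sk1iso} et \ref{cor:susgen}, Th\'eor\`eme \ref{thm:cohen}). Les seuls points \`a surveiller sont l'injectivit\'e de la fl\`eche verticale de droite du diagramme \eqref{diag:gen} -- cons\'equence du caract\`ere scind\'e des suites exactes en jeu -- et la compatibilit\'e de l'isomorphisme de type Platonov avec les extensions de Cohen, d\'ej\`a enregistr\'ee dans la remarque suivant le Corollaire \ref{corr:sk1iso}.
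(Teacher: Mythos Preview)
Your argument is correct and follows the same approach as the paper: the equality $\ind_k(A)=\ind_K(B_K)$ together with Corollary~\ref{corr:sk1iso} handles the weak conjecture, and the specialisation diagram~\eqref{diag:gen} (with the injectivity of its right vertical map) handles the strong one. One small imprecision: for the weak version you argue at the level of the single groups $\SKb_1(B_K)(K)$ and $\SKb_1(A)(k)$, whereas the conjecture concerns the \emph{functors}; but the Cohen-extension argument you spell out for the strong version applies verbatim (find $k'$ with $\SK_1(A_{k'})\neq 0$, lift to $K'$) and fixes this immediately.
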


\begin{proof}
Rappelons que $\ind_k(A)=\ind_K(B_K)$.
L'enonc\'e sur la conjecture de Suslin m\^eme suit donc imm\'ediatement du Corollaire \ref{corr:sk1iso}.  L'enonc\'e
sur la conjecture de Suslin forte est vrai, parce que l'invariant de Suslin satisfait/est d\'efini par un morphisme
\[ \Inv^4\left(\SKb_1(B_K),\Hcal_{r,L,\Bcal_K}^\ast\right)\to \Inv^4\left(\SKb_1(A),\Hcal_{r,L,\Acal}^\ast\right).\]
\end{proof}

\begin{rem} Une r\'eciproque de 
la Proposition \ref{prop:susred} est une question ouverte et ne suit pas formellement des d\'efinitions.  En effet, si $\SKb_1(A)=0$, c'est-\`a-dire $\SK_1(A\otimes_k k')=0$ pour
toute extension de corps $k'$ de $k$.  Alors, $\SK_1(B_K \otimes_K K')=\SK_1(A\otimes_k k')=0$ pour toute extension de Cohen $(K',R',k')$ de
$(K,R,k)$.  Mais, il n'est pas s\^ur que \break $\SK_1(B_K\otimes_K F)=0$ pour toute extension $F$ de $K$.  On peut reformuler la situation aussi dans le cadre de \S \ref{sec:pointvue}; la Question \ref{qu:invtriv} se traduit \`a la possible injectivit\'e du morphisme \eqref{eq:incques}.
\end{rem}

Les constructions effectu\'ees par l'auteur ne lui semblent donc pas donner
de fa\c{c}ons imm\'ediates \`a faire des r\'eductions fortes de caract\'eristiques.  Il serait int\'eressant de pouvoir d\'efinir
une des fl\`eches point\'ees (dans un sens au choix) dans le diagramme en bas.  On y abr\`ege la conjecture de Suslin (forte) par CS(F).
\[
\xymatrix{
\fbox{\text{CS en caract\'eristique positive}} \ar@2{<.>}[rr]^{\quad ?} & & \fbox{\text{CS en caract\'eristique 0}} \\
\fbox{\text{CSF en caract\'eristique positive}} \ar@2{<.>}[rr]^{\quad ?} \ar@2{->}[u] & & \fbox{\text{CSF en caract\'eristique 0}} \ar@2{->}[u]
}
\]

\appendix
\section{V\'erification des r\`egles} \label{sec:append}

Dans cette appendice, on v\'erifie que toutes les r\`egles sont bien d\'efinies
pour le module de cycles $\Hcal_{p^n,L}^\ast$ de la Section \ref{sec:sauvage}.  
Nous rappelons les r\`egles pour un module de cycles $M$ de base $R$
et v\'erifions qu'ils vont pour le module de cycle $\Hcal^\ast_{p^n,L}$.
Dans les r\`egles, $E,F,G$ sont des $\rcorps$ arbitraires, et toute application entre des corps est un morphisme de 
$\rcorps$.

\begin{enumerate}[\bf {R1}a:]
 \item \label{it:r1a} Pour tous $\varphi:F\to E, \psi:E\to G$, on a $(\psi \circ \varphi)_\ast=\psi_\ast \circ \varphi_\ast$.
 \item \label{it:r1b} Pour tous $\varphi:F\to E, \psi:E\to G$ finis, on a $(\psi \circ \varphi)^\ast=\varphi^\ast \circ \psi^\ast$.
 \item \label{it:r1c} Soient $\varphi:F\to E, \psi:F\to G$ avec $\varphi$ fini et $S=G\otimes_F E$.  Pour $p\in \spec (S)$, soient
$\varphi_p:G\to S/p, \psi_p:E\to S/p$ les applications naturelles et $l_{p}$ la longueur de l'anneau localis\'e
$S_{(p)}$.  Alors,
\[ \psi_\ast \circ \varphi^\ast = \sum_p l_p \cdot (\varphi_p)^\ast \circ (\varphi_p)_\ast. \]
\end{enumerate}

\begin{enumerate}[\bf R2a:]
 \item[\bf R2:]  \setcounter{enumi}{0} Pour $\varphi:F\to E, x\in K_\ast F, y\in K_\ast E,\rho \in M(F), \mu \in M(E)$, on a 
(avec $\varphi$ fini dans R2\ref{it:r2b} et R2\ref{it:r2c}):
\item $\varphi_\ast (x\cdot \rho)=\varphi_\ast (x) \cdot \varphi_\ast (\rho)$,
\item \label{it:r2b} $\varphi^\ast (\varphi_\ast(x) \cdot \mu) = x \cdot \varphi^\ast(\mu)$, et
\item \label{it:r2c} $\varphi^\ast (y \cdot \varphi_\ast(\rho))= \varphi^\ast (y) \cdot \rho$.
\end{enumerate}

\begin{enumerate}[\bf R3a:]
 \item \label{it:r3a} Soient $\varphi:E\to F$ et $v$ une valuation de $F$ qui se restreint \`a une valuation $w$ non triviale sur $E$
avec indice de ramification $e$.  Soit $\bar{\varphi}:\kappa(w)\to \kappa(v)$ l'application induite.
Alors,
\[ \partial_v \circ \varphi_\ast = e \cdot \bar{\varphi}_\ast \circ \partial_w. \]
\item \label{it:r3b}  Soient $\varphi:F\to E$ fini et $v$ une valuation de $F$.  Pour toute extension $w$ de $v$ sur $E$, soit
$\varphi_w:\kappa(v)\to \kappa(w)$ l'application induite.  Alors,
\[ \partial_v \circ \varphi^\ast = \sum_w \varphi^\ast_w \circ \partial_w. \]
\item \label{it:r3c}  Soient $\varphi:E\to F$  et $v$ une valuation de $F$ qui est triviale sur $E$.  Alors,
\[ \partial_v \circ \varphi_\ast = 0. \] 
\item \label{it:r3d}  Soient $\varphi:E\to F$, $v$ une valuation de $F$ qui est triviale sur $E$, $\bar{\varphi}:E\to \kappa(v)$ l'application
induite et $\pi$ une  uniformisante de $v$.  Soit de plus $s^\pi_v:M(F) \to M(\kappa(v))$ d\'efini par $s^\pi_v(\rho)=\partial_v(\{-\pi \}\cdot \rho)$, alors
\[ s^\pi_v \circ \varphi_\ast = \bar{\varphi}_\ast. \]
\item \label{it:r3e}  Soient $v$ une valuation sur $F$, $u$ une $v$-unit\'e et $\rho\in M(F)$, alors on a
\[ \partial_v(\{-u\}\cdot \rho)=-\{\bar{u} \} \cdot \partial_v(\rho).\]
\end{enumerate}

Pour un $R$-sch\'ema $\Xcal$, on note $M(x)=M(\kappa(x))$ pour $x\in \Xcal$.  Si $\Xcal$ est irr\'eductible, son point g\'en\'erique
est not\'e $\xi$.  Si $\Xcal$ est normal, tout $x\in \Xcal^{(1)}$ induit $\partial_x:M(\xi)\to M(x)$.  Pour tous $x,y\in \Xcal$, on d\'efinit maintenant $\partial^x_y$.  On pose $\partial_y^x=0$ si $Z=\overline{\{x\}}$ et $y\not\in Z^{(1)}$.  Autrement, soit $\tilde{Z}\to Z$ la
normalisation et 
\[ \partial^x_y:=\sum_{z|y} \varphi_z^\ast \circ \partial_{z}, \]
o\`u $z$ parcourt les points de $\tilde{Z}$ au dessus de $y$ et $\varphi_z$ est le morphisme fini $\kappa(y)\to \kappa(z)$.
\begin{enumerate}
 \item[\bf FD:] \textit{(\og Finite support of divisors \fg)} Soient $\Xcal$ un $R$-sch\'ema normal et $\rho\in M(\xi)$.  Alors, $\partial_x(\rho)=0$ pour presque tout $x\in \Xcal^{(1)}$. 
\item[\bf C:]  \textit{(\og Closedness \fg)}  Soient $\Xcal$ int\'egral, local de dimension $2$ et $x_0$ le point ferm\'e de $\Xcal$.  Alors,
\[ 0 = \sum_{x\in \Xcal^{(1)}} \partial_{x_0}^x \circ \partial_x^\xi : M(\xi) \to M(x_0). \] 
\end{enumerate}

\begin{prop}
Soit $(K,R,k)$ un $p$-triplet de Cohen avec $(L,S,\Lbar)$ une extension de Cohen finie galoisienne.  Alors,
$\Hcal^\ast_{p^n,L}$ de la D\'efinition \ref{def:modcycsauv} respecte les r\`egles
R1a-R3e, FD et C de module de cycles ($n>1$) un entier.
\end{prop}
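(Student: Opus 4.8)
Le plan est de ramener la v\'erification au comportement des groupes $H^{j+1}_{p^n}$ et de discuter ensuite selon les caract\'eristiques. On observe d'abord que $\Hcal^\ast_{p^n,L}$ est un sous-foncteur gradu\'e de $F\mapsto (H^{j+1}_{p^n}(F))_{j\geq 0}$ et que les quatre donn\'ees D1--D4 dont on l'a muni dans la Section \ref{sec:sauvage} sont les restrictions des op\'erations correspondantes sur les $H^{j+1}_{p^n}$ -- pour D4 en caract\'eristique mixte, le r\'esidu non ramifi\'e fourni par la suite exacte de Kato \eqref{eq:suitesauv} (Th\'eor\`eme \ref{thm:katoizh}), ce qui a un sens car $\Hcal^\ast_{p^n,L}(F)\subseteq H^{j+1}_{p^n,\text{nr}}(F)$ pour tout $R$-corps $F$, l'extension $L$ \'etant non ramifi\'ee. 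Comme les r\`egles R1a--R3e, FD et C sont des identit\'es entre compos\'es de D1--D4, il suffit donc de v\'erifier la r\`egle correspondante pour les op\'erations ambiantes sur les $H^{j+1}_{p^n}$, puis de la restreindre aux sous-groupes en jeu (qui sont pr\'eserv\'es, cf. Section \ref{sec:sauvage}). Un homomorphisme de corps conserve la caract\'eristique et un corps r\'esiduel d'une valuation discr\`ete sur un corps de caract\'eristique $p$ est de caract\'eristique $p$ ; chaque instance tombe donc dans l'un de trois r\'egimes -- tous les corps de caract\'eristique $0$, tous de caract\'eristique $p$, ou bien un r\'egime \emph{mixte} -- et le r\'egime mixte ne peut appara\^itre que pour R3a, R3b, R3e (un r\'esidu $\partial_v$ avec $\car(F)=0$ et $\car(\kappa(v))=p$) et pour FD et C (un $R$-sch\'ema rencontrant \`a la fois la fibre g\'en\'erique et la fibre sp\'eciale).

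En \'egale caract\'eristique $0$, les $H^{j+1}_{p^n}(F)$ ne sont autres que les $H^{j+1}(F,\mu_{p^n}^{\otimes j})$ avec $p^n\in F^\times$, de sorte qu'on retrouve le module de cycles mod\'er\'e et que les r\`egles sont la v\'erification de Rost \cite[1.11]{Rostmodcyc}. En \'egale caract\'eristique $p$, on utilise l'isomorphisme de Bloch--Kato--Gabber \cite[Thm. 2.1]{blochkato} entre $H^{j+1}_{p^n}(F)$ et $H^1(F,\nu_n(j)_{F_s})$, la multiplicativit\'e du symbole diff\'erentiel et sa compatibilit\'e aux r\'esidus et aux transferts mises en place dans la Section \ref{sec:sauvage} : moyennant cela, les r\`egles sont celles \'etablies par Kato \cite{katogalcoh} et, pour les r\'esidus, par Izhboldin \cite{izboldhin}. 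En particulier le facteur de ramification $e$ de R3a et R3b y est pris en compte, y compris lorsque $p\mid e$, ce qui est pr\'ecis\'ement le ph\'enom\`ene absent du cas mod\'er\'e.

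Reste le r\'egime mixte. Pour R3a, R3b et R3e, on se ram\`ene par la Remarque \ref{rem:ressauvnoncom} au cas o\`u $F$ est complet pour $v$ (le corps r\'esiduel ne change pas), puis on utilise le scindage explicite de \eqref{eq:suitesauv} d\'ecrit dans la Remarque \ref{rem:ressauv} : le r\'esidu est la projection sur le facteur $H^{j}_{p^n}(\kappa(v))$ de la d\'ecomposition $H^{j+1}_{p^n,\text{nr}}(F)=i^\ast H^{j+1}_{p^n}(\kappa(v))\oplus\psi H^{j}_{p^n}(\kappa(v))$, o\`u $i^\ast$ et $\psi$ s'\'ecrivent \`a partir du symbole galoisien $h^j_{p^n,F}$, de l'application canonique sur les vecteurs de Witt et du cup-produit avec une uniformisante fix\'ee. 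Ces formules \'etant les analogues \'evidents, en caract\'eristique mixte, de celles du cas mod\'er\'e d\'ecrites par Serre \cite[\S 7]{cohinv}, leur compatibilit\'e avec $\varphi_\ast$, $\varphi^\ast$, le facteur $e$ et l'action de la $K$-th\'eorie ram\`ene R3a, R3b et R3e aux m\^emes identit\'es alg\'ebriques ; et, comme d\'ej\`a exploit\'e au Corollaire \ref{corr:cohencoh}, pour un $p$-triplet de Cohen on peut prendre $p$ comme uniformisante, ce qui rend le scindage compatible avec les extensions de base qui interviennent.

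Pour FD, on peut supposer $\Xcal$ noeth\'erien : une classe de $H^{j+1}_{p^n}(\xi)$ provient d'une section sur un ouvert dense $U\subseteq\Xcal$, les points de codimension $1$ hors de $U$ sont en nombre fini, et ceux de $U$ situ\'es dans la fibre sp\'eciale $\Xcal\times_R k$ sont les points g\'en\'eriques de ses composantes, eux aussi en nombre fini ; il n'y a donc qu'un nombre fini de r\'esidus non nuls, et sur le lieu de caract\'eristique $0$ la r\`egle est le cas d\'ej\`a connu. Pour C, on se ram\`ene \`a $\Xcal=\spec(C)$ pour $C$ une $R$-alg\`ebre locale int\`egre compl\`ete de dimension $2$ ; si son corps des fractions est de caract\'eristique $p$, l'\'enonc\'e est la loi de r\'eciprocit\'e en \'egale caract\'eristique de Kato, si son point ferm\'e est de caract\'eristique $0$, c'est celle de Rost, et dans le cas mixte restant les id\'eaux premiers de hauteur $1$ contenant $p$ sont en nombre fini et sont exactement ceux de corps r\'esiduel de caract\'eristique $p$, les autres \'etant de caract\'eristique $0$. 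En scindant la somme double $\sum_{x\in \Xcal^{(1)}}\partial_{x_0}^x\circ\partial_x^\xi$ en la contribution passant par ces premiers et le reste, en r\'e\'ecrivant la premi\`ere \`a l'aide des corps complets discr\`etement valu\'es $\Frac(\widehat{\Ocal}_{\Xcal,\mathfrak p})$ et en appliquant le scindage de \eqref{eq:suitesauv} avec les deux lois de r\'eciprocit\'e d\'ej\`a acquises, le total doit s'annuler. Je m'attends \`a ce que cette derni\`ere \'etape -- la r\`egle C dans le cas mixte de dimension $2$, ainsi que le facteur $e$ de R3a et R3b -- soit le principal obstacle : c'est l\`a que la ramification sauvage distinguant $\Hcal^\ast_{p^n,L}$ du module mod\'er\'e doit \^etre ma\^itris\'ee, de sorte qu'il n'y a pas de r\'eduction purement formelle et qu'il faut s'appuyer sur les calculs explicites de Kato \cite{katogalcoh} en dimension $2$.
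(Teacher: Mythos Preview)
Your treatment of R1a--R3e and FD is adequate and close to the paper's: these rules do follow from the definitions, the functorial properties of Kato's groups, and the explicit scindages of the Remarque \ref{rem:ressauv}. The paper dispatches them in two sentences.

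The gap is in rule C. You correctly identify it as the crux, but your proposed argument for the mixed case --- splitting the double sum according to the residue characteristic of $x\in\Xcal^{(1)}$, passing to completions, invoking the scindage of \eqref{eq:suitesauv}, and then appealing to ``les calculs explicites de Kato en dimension $2$'' --- is not a proof: you do not explain why the two pieces recombine to zero, and the scindage depends on a choice of uniformiser that has no reason to be globally coherent across the various $x$. Your own final sentence concedes this.

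The paper's device, which you miss, is to \emph{not} attack C directly but to embed the entire complex \eqref{eq:complexe} into a complex built from Milnor $K$-theory, and then quote rule C for $K^M_\ast$ (Kato). Concretely: for a point in characteristic $0$ one uses $\cd_p(\Gamma)\leq 1$ (with $\Gamma=\Gal(F_{\text{nr}}/F)$) and the Hochschild--Serre spectral sequence to get
\[
H^{q+1}_{p^n,L}(F(X)) \hookrightarrow H^1\bigl(\Gamma,\,H^q(F_{\text{nr}}(X),\mu_{p^n}^{\otimes q})\bigr) \cong H^1\bigl(\Gamma,\,K_q(F_{\text{nr}}(X))/p^n\bigr),
\]
the last isomorphism being Bloch--Kato (Voevodsky--Rost--Weibel); for a point in characteristic $p$ one uses \eqref{eq:isonu} together with Bloch--Kato--Gabber $\nu_n(q)\cong K_q/p^n$ to obtain the analogous inclusion. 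A separate lemma (the paper's Lemme \ref{lem:kcom}) then checks, case by case, that the four types of residues $\partial_x$, $\partial_y$, $\partial^x_{y_0}$, $\partial^y_{y_0}$ are compatible with the $K$-theoretic residues through these inclusions. Once that is done, C for $\Hcal^\ast_{p^n,L}$ is a formal consequence of C for $K^M_\ast/p^n$. This is the missing key lemma in your plan; without it, the mixed-characteristic case of C remains open.
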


\begin{rem}
Les donn\'ees D1-D4 sont donn\'ees dans \S \ref{sec:sauvagecoh} \ref{sec:diffloggen}, \ref{sec:sauvdef}, \ref{sec:sauvkth} et \ref{sec:sauvres}. 
\end{rem}

\begin{proof}

Les r\`egles R1a-R3e suivent imm\'ediatement de la d\'efinition de $\Hcal^\ast_{p^n,L}$.  \`A noter que la r\`egle R1c suit de la
propri\'ete universelle des produits tensoriels.
La v\'erification de la r\`egle FD suit comme dans le cas classique du support fini des diviseurs \cite[Ch. II. Lem. 6.1]{hart}.

Nous allons d\'eduire la r\`egle $C$  du fait qu'il vaut pour les $K$-groupes de Milnor \cite{katoct}.  Les
r\'esidus $\partial_K$ pour les $K$-groupes de Milnor sont expliqu\'es dans \S \ref{sec:cohomdef} \ref{sec:exmodcyc} et dans \eqref{eq:reskth}.  Pour \'eviter une 
$K$-cophonie, nous supposons pour cette partie que la base de module de cycles est 
$(F,R,\Fbar)$ au lieu de $(K,R,k)$. 
Soit donc $\Xcal$ un $R$-sch\'ema int\'egral et local de dimension $2$.  On suppose tout d'abord que le morphisme
structural $\Xcal$ est surjectif.  Alors, $X:=\Xcal \times_R F$ est un $F$-sch\'ema et $Y:=\Xcal \times_R \Fbar$
est un $\Fbar$-sch\'ema, tous les deux de dimension 1 et $\car(F(X))=0$ et $\car(\Fbar(Y))=p$.  Il faut donc v\'erifier que la composition des r\'esidus
fournit un complexe ($y_0$ le point ferm\'e de $\Xcal$ et $q\geq 2$):
\begin{equation} \label{eq:complexe}
H^{q+1}_{p^n,L}(F(X)) 
\to 
\bigoplus_{x\in X^{(1)}} H^{q}_{p^n,L}(F(x))
\oplus 
\bigoplus_{y\in Y^{(0)}} H^{q}_{p^n,L}(\Fbar(y))
\to 
H^{q-1}_{p^n,L}(\Fbar(y_0)).
\end{equation}

Nous allons d\'ecrire les groupes et les r\'esidus en question avec des $K$-groupes pour pouvoir utiliser la r\`egle $C$
pour les groupes de $K$-th\'eorie.  
 D\'ecrivons d'abord les diff\'erents groupes avec la $K$-th\'eorie
de Milnor.   

\begin{itemize}
 \item \textit{Le groupe $H^{q+1}_{p^n,L}(F(X))$:}\\
Puisque 
\[ \Gamma=\Gal(F_{\text{nr}}(X)/F(X))\cong\Gal(F_{\text{nr}}/F)\cong \Gal(\Fbar_s/\Fbar),\]
on sait que $\cd_p(\Gamma)\leq 1$ \cite[Ch. II, Prop. 3]{serregalcoh}. 
La suite spectrale de Hochschild-Serre
\[ E_2^{s,t} := H^s\left(\Gamma,H^t(F_{\text{nr}}(X),\mu_{p^n}^{\otimes q})\right) \Longrightarrow H^{s+t}(F(X),\mu_{p^n}^{\otimes q}) \] 
induit donc un isomorphisme 
\[ H^1\bigl(\Gamma,H^q(F_{nr}(X),\mu_{p^n}^{\otimes q})\bigr) \cong 
\ker \bigl[ H^{q+1}_{p^n}(F(X)) \to H^{q+1}_{p^n}(F_{\text{nr}}(X))\bigr]. \] 
La conjecture de Bloch-Kato, prouv\'ee par Voevodsky-Rost-Weibel \cite{blochkato,voevodblk,rostblk,weibelblk}, 
dit en plus que $H^q(F_{\text{nr}}(X),\mu_{p^n}^{\otimes q})\cong K_q(F_{\text{nr}}(X))/p^n$.  Ceci nous donne 
un isomorphisme 
\begin{equation} \label{eq:kernr}
H^1\bigl(\Gamma,K_q(F_{\text{nr}}(X))/p^n\bigr) \cong \ker \bigl[ H^{q+1}_{p^n}(F(X)) \to H^{q+1}_{p^n}(F_{\text{nr}}(X))\bigr]
\end{equation}
 et donc une inclusion
\begin{equation} \label{eq:inc1}
H^{q+1}_{p^n,L}(F(X)) \subset H^1(\Gamma,K_q(F_{\text{nr}}(X))/p^n).  
\end{equation}
 
\item \textit{Le groupe $H^{q+1}_{p^n,L}(F(x))$ pour $x\in X^{(1)}$:}\\
De m\^eme fa\c{c}on que ci-dessus, on obtient une inclusion
\begin{equation} \label{eq:inc2}
H^{q}_{p^n,L}(F(x)) \subset H^1(\Gamma,K_{q-1}(F_{\text{nr}}(x))/p^n).  
\end{equation}

\item \textit{Le groupe $H^{q}_{p^n,L}(\Fbar(y))$ pour $y\in Y^{(0)}$:}\\
Soit $y\in Y^{(0)}$, alors $H^{q}_{p^n}(\Fbar(y))\cong H^1 \Bigl(\Fbar(y),\nu_n(q-1)_{\Fbar(y)_s}\Bigr)$.  
L'isomorphisme de Bloch-Kato-Gabber  $\nu_n(q-1)_{\Fbar(y)_s}\cong K_{q-1}(\Fbar(y)_s)/p^n$ \cite[Thm. 2.1]{blochkato} donne donc un isomorphisme,
\[ H^1\bigl(\Fbar(y),K_{q-1}(\Fbar(y)_s)/p^n\bigr) \cong H^{q+1}_{p^n}(\Fbar(y)),\] qui implique aussi une inclusion :
\begin{align} 
  H^{q}_{p^n,L}(\Fbar(y)) & \cong \ker \bigl[ H^1\bigl(\Fbar(y),K_{q-1}(\Fbar(y)_s)/p^n\bigr) \to H^1\bigl(\Lbar(y),K_{q-1}(\Fbar(y)_s)/p^n\bigr) \bigr] \notag \\
& \subset 
\ker \bigl[ H^1\bigl(\Fbar(y),K_{q-1}(\Fbar(y)_s)/p^n\bigr) \to H^1\bigl(\Fbar_s(y),K_{q-1}(\Fbar(y)_s)/p^n\bigr)\bigr]. \label{eq:inc3}
\end{align}
Ce dernier est isomorphe \`a $H^1\bigl(\Gamma,(K_{q-1}(\Fbar(y)_s)/p^n)^{\Gamma_{\Fbar_s(y)}}\bigr)$ par la suite d'inflation-restriction \cite[Prop. 3.3.14]{gilleszam}.  
  
\item \textit{Le groupe $H^{q}_{p^n,L}(\Fbar(y_0))$ pour $y_0$ le point ferm\'e de $\Xcal$:}\\
Alors comme au-dessus:
\begin{equation} \label{eq:inc4}
H^{q-1}_{p^n,L}(\Fbar(y_0)) \subset H^1\left(\Gamma,(K_{q-2}(\Fbar(y_0)_s)/p^n)^{\Gamma_{\Fbar_s(y_0)}}\right). 
\end{equation}\vspace{5mm}
\end{itemize}
Expliquons maintenant les r\'esidus en termes de $K$-th\'eorie.
\begin{itemize}
 \item \textit{Le r\'esidu $\partial_x:H^{q+1}_{p^n,L}(F(X))\to H^{q+1}_{p^n,L}(F(x))$ pour $x\in X^{(1)}$:}\\
La valuation associ\'ee \`a $x$ induit bien le r\'esidu $\partial_x$, mais aussi un  r\'esidu $\Gamma$-\'equivariant $\partial_{K,x}:K_q(F_{\text{nr}}(X))/p^n\to K_{q-1}(F_{\text{nr}}(x))/p^n$ (puisque $ \Gal (F_{\text{nr}}(x)/F(x))\cong \Gamma$). 
Ceci induit donc un morphisme (\`a qui on donne le m\^eme nom par abus de notation):
\[ \partial_{K,x}: H^1(\Gamma,K_q(F_{\text{nr}}(X))/p^n)\to H^1(\Gamma,K_{q-1}(F_{\text{nr}}(x))/p^n).\]
 Le Lemme
\ref{lem:kcom} plus loin induit que $\partial_{K,x}$ est compatible avec $\partial_x$ par les inclusions \eqref{eq:inc1} et \eqref{eq:inc2}, c'est-\`a-dire on a un diagramme commutatif:
\begin{equation} \label{diag:sit1}
\xymatrix{
H^{q+1}_{p^n,L}\left(F(X)\right)\ \ \ar@{^(->}[r] \ar[d]_{\partial_x} & \ \ H^1\bigl(\Gamma,K_q(F_{\text{nr}}(X))/p^n\bigr) \ar[d]^{\partial_{K,x}} \\  
H^{q}_{p^n,L}\left(F(x)\right)\ \ \ar@{^(->}[r] &\ \ H^1\bigl(\Gamma,K_{q-1}(F_{\text{nr}}(x))/p^n\bigr).  
}
\end{equation}

\item \textit{Le r\'esidu $\partial_y:H^{q+1}_{p^n,L}(F(X))\to H^{q}_{p^n,L}(\Fbar(y))$ pour $y\in Y^{(0)}$:}\\
La valuation associ\'ee \`a $y$ induit bien un r\'esidu  $\partial_y$.  Dans le Lemme \ref{lem:kcom},
on d\'emontre que sous l'injection \eqref{eq:inc3} $\im (\partial_y)$ est  envoy\'e dans  $H^1\left(\Gamma,K_q(\Fbar_s(y))/p^n\right)$.  
De l'autre c\^ot\'e, la valuation associ\'ee \`a $y$ induit un r\'esidu $\Gamma$-\'equivariant
$\partial_{K,y}:K_q(F_{\text{nr}}(X))\to K_{q-1}\left(\Fbar_s(y)\right)$ et donc un morphisme:
\[ \partial_{K,y}: H^1\bigl(\Gamma,K_{q}(F_{\text{nr}}(X))/p^n\bigr) \to H^1\bigl(\Gamma,K_{q-1}(\Fbar_s(y))/p^n\bigr).\]
Le Lemme \ref{lem:kcom} d\'emontre qu'on a un diagramme commutif qui exprime la compatibilit\'e de $\partial_y$ et de $\partial_{K,y}$ sous les inclusions \eqref{eq:inc1} et \eqref{eq:inc3}:
\begin{equation} \label{diag:sit2}
\xymatrix{
\ \ H^{q+1}_{p^n,L}\left(F(X)\right) \ \ \ar@{^(->}[r] \ar[d]_{\partial_y} & \ \ H^1\bigl(\Gamma,K_q(F_{\text{nr}}(X))/p^n\bigr) \ \ \ar[d]^{\partial_{K,y}} \\  
\ \ H^{q}_{p^n,L}\left(\Fbar(y)\right) \ \ \ar@{^(->}[r] & \ \ H^1\bigl(\Gamma,K_{q-1}(\Fbar_s(y))/p^n\bigr).  \ \ 
}
\end{equation}

 \item \textit{Le r\'esidu $\partial^x_{y_0}:H^{q+1}_{p^n,L}(F(x))\to H^{q+1}_{p^n,L}(F(y_0))$ pour $x\in X^{(1)}$:}\\
De nouveau, on a bien le r\'esidu $\partial^x_{y_0}$, et dans le Lemme \ref{lem:kcom} on d\'emontre que sous l'inclusion \eqref{eq:inc4} $\im (\partial^x_{y_0})$ 
est contenu dans $H^1\left(\Gamma,K_{q-2}(\Fbar_s(y_0))/p^n\right)$.
Par ailleurs, on a aussi un r\'esidu $\Gamma$-\'equivariant $\partial_{K,y_0}^x:K_{q-1}(F_{\text{nr}}(x))\to K_{q-2}(\Fbar_s(y_0))$ qui donne au
niveau de cohomologie un morphisme :
\[ \partial_{K,y_0}^x:H^1(\Gamma,K_{q-1}(F_{\text{nr}}(x))/p^n) \to H^1(\Gamma,K_{q-2}(\Fbar_s(y_{0}))/p^n). \]
De nouveau, le Lemme \ref{lem:kcom} garantit que $\partial_{K,y_0}^x$ est compatible avec $\partial^x_{y_0}$ sous les
inclusions \eqref{eq:inc2} et \eqref{eq:inc4} formant ainsi le diagramme commutatif:
\begin{equation} \label{diag:sit3}
\xymatrix{ 
\ \  H^{q}_{p^n,L}(F(x)) \ \  \ar[d]_{\partial^x_{y_0}} \ar@{^(->}[r] & \ \  H^1\bigl(\Gamma,K_{q-1}(F_{\text{nr}}(x))/p^n\bigr) \ar[d]^{\partial^x_{K,y_0}} \ \  \\
\ \  H^{q-1}_{p^n,L}(\Fbar(y_0)) \ \  \ar@{^(->}[r] & \ \  H^1\bigl(\Gamma,K_{q-2}(\Fbar_s(y_0))/p^n\bigr).  \ \ 
}
\end{equation}

\item \textit{Le r\'esidu $\partial^y_{y_0}:H^{q}_{p^n,L}(\Fbar(y))\to H^{q+1}_{p^n,L}(F(y_0))$ pour $y\in Y^{(0)}$:}\\
Dans cette situation, on a aussi  un r\'esidu $\partial^y_{y_0}$ sur les groupes de cohomologie et un r\'esidu 
$\Gamma$-\'equivariant de la $K$-th\'eorie $\partial_{K,y_0}^y: K_{q-1}(\Fbar_s(y))\to K_{q-2}(\Fbar_s(y_0))$  (pour $y\in Y^{(1)}$).  Alors, 
$\partial_{K,y_0}^y$ induit un morphisme au niveau de cohomologie :
\[ \partial_{K,y_0}^y:H^1(\Gamma,K_{q-1}(\Fbar_s(y))/p^n) \to H^1(\Gamma,K_{q-2}(\Fbar_s(y_{0}))/p^n),  \]
et le Lemme \ref{lem:kcom} d\'emontre la compatibilit\'e de $\partial_{K,y_0}^y$ avec $\partial_{y_0}^y$ sous les 
inclusions \eqref{eq:inc3} et \eqref{eq:inc4}:
\begin{equation} \label{diag:sit4}
 \xymatrix{
\ \  H^{q}_{p^n,L}(\Fbar(y)) \ \ \ar[d]_{\partial_{y_0}^y} \ar@{^(->}[r] & \ \ H^1 \bigl(\Gamma,K_{q-1}(\Fbar_s(y))/p^n\bigr)\ \  \ar[d]^{\partial_{K,y_0}^y} \\ 
\ \ H^{q-1}_{p^n,L}(\Fbar(y_0)) \ \ \ar@{^(->}[r] & \ \ H^1\bigl(\Gamma,K_{q-2}(\Fbar_s(y_0))/p^n\bigr). \ \  
}
\end{equation}
\end{itemize}
En somme, on a donc un ensemble de r\'esidus,
\begin{multline*} 
 H^1\bigl(\Gamma,K_q(F_{\text{nr}}(X))/p^n\bigr)   \to 
\bigoplus_{x\in X^{(1)}} H^1\bigl(\Gamma,K_{q-1}(F_{\text{nr}}(x))/p^n\bigr) \oplus \bigoplus_{y\in Y^{(0)}} H^1(\Gamma,K_{q-1}\bigl(\Fbar_s(y))/p^n\bigr) \\
\to H^1\bigl(\Gamma,K_{q-2}(\Fbar_s(y_{0}))/p^n\bigr), 
\end{multline*}
dont on sait que c'est un complexe parce que les $K$-groupes de Milnor respectent la r\`egle $C$ \cite{katoct}.  
Les diagrammes commutatifs (\ref{diag:sit1},\ref{diag:sit2},\ref{diag:sit3},\ref{diag:sit4}) 
donnent bien que \eqref{eq:complexe} est un complexe.

Si le morphisme structural n'est pas surjectif, on a ou bien un $F$-sch\'ema, ou bien un $\Fbar$-sch\'ema.  Si $\Xcal$
est un $F$-sch\'ema, les groupes en vigueur sont d\'efinis commes des noyaux des groupes de la cohomologie galoisienne mod\'er\'e.  La r\`egle
$C$ suit donc de la r\`egle $C$ du cas mod\'er\'e.  Si $\Xcal$ est un $\Fbar$-sch\'ema,  
on r\'e\'ecrit \eqref{eq:complexe} avec \eqref{eq:isonu} et l'isomorphisme de Bloch-Gabber-Kato comme
\[ 
H^1\bigl(\Gamma,K_q(\Fbar_s(\Xcal))/p^n\bigr) \to 
\bigoplus_{x\in \Xcal^{(1)}} H^1\bigl(\Gamma,K_{q-1}(\Fbar_s(x))/p^n\bigr) \to
H^1\bigl(\Gamma,K_{q-1}(\Fbar_s(x_0))/p^n\bigr),
\]
avec $x_0$ le point ferm\'e de $\Xcal$.
Celui est de nouveau un complexe, parce que les r\'esidus sont compatibles avec les r\'esidus de la $K$-th\'eorie de
Milnor (voir Lemme \ref{lem:kcom} dans le cas ``$y$ et $y_0$''), et parce que la r\`egle $C$ vaut pour la $K$-th\'eorie de Milnor \cite{katoct}. 
\end{proof}

\begin{lemma} \label{lem:kcom}
Soit $\Xcal$ un $R$-sch\'ema tel que le morphisme structural est surjectif, alors les diagrammes
(\ref{diag:sit1},\ref{diag:sit2},\ref{diag:sit3},\ref{diag:sit4}) sont commutatifs.
\end{lemma}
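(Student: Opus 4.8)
Le plan est de ramener la commutativit\'e de chacun des quatre carr\'es \`a une v\'erification sur les symboles, puis d'identifier les deux sortes de r\'esidus terme \`a terme \`a l'aide des formules explicites de la Remarque~\ref{rem:ressauv} et de la compatibilit\'e des symboles de Bloch--Kato et de Bloch--Kato--Gabber avec les r\'esidus. D'abord, je remarquerais que les quatre diagrammes sont form\'es de groupes ab\'eliens et d'homomorphismes, et qu'apr\`es les identifications \eqref{eq:inc1}--\eqref{eq:inc4} les groupes des colonnes de droite sont engendr\'es --- via Bloch--Kato, Bloch--Kato--Gabber et le th\'eor\`eme~90 de Hilbert --- par des classes de cohomologie de symboles de la $K$-th\'eorie de Milnor modulo~$p^n$, tandis que les groupes $H^{q+1}_{p^n,L}(\cdot)$ de gauche sont engendr\'es par les classes des symboles $w\otimes b_1\otimes\cdots\otimes b_q$. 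Comme les inclusions verticales proviennent des applications symboles naturelles --- fonctorielles en le corps et compatibles avec les suites spectrales de Hochschild--Serre et d'inflation--restriction servant \`a construire \eqref{eq:inc1}--\eqref{eq:inc4} --- il suffira de poursuivre un seul symbole dans chaque carr\'e.

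Je traiterais ensuite \`a part les deux carr\'es d'\'egale caract\'eristique. Pour \eqref{diag:sit1} on est en caract\'eristique~$0$ : $H^{q+1}_{p^n,L}(F(X))$ est un groupe de cohomologie galoisienne ordinaire, $\partial_x$ est le r\'esidu galoisien usuel, et la compatibilit\'e voulue est le fait classique que le symbole galoisien $K_q/p^n\to H^q(\mu_{p^n}^{\otimes q})$ entrelace le symbole mod\'er\'e de la $K$-th\'eorie avec le r\'esidu galoisien \cite[\S 6,7]{cohinv} ; on conclut en appliquant $H^1(\Gamma,-)$ au r\'esidu mod\'er\'e $\Gamma$-\'equivariant $\partial_{K,x}$ et en utilisant que \eqref{eq:inc1}, \eqref{eq:inc2} s'obtiennent en prenant $H^1(\Gamma,-)$ de l'isomorphisme \eqref{eq:kernr}. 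Pour \eqref{diag:sit4}, tous les corps en jeu sont de caract\'eristique~$p$, donc $\partial^y_{y_0}$ est le r\'esidu de $\Hcal^\ast_{p^n,L}$ en \'egale caract\'eristique~$p$, que la description d'Izhboldin (Remarque~\ref{rem:ressauv}, cas d'\'egale caract\'eristique~$p$) donne sur les symboles par les formules m\^emes ($\bar w\otimes\bar b_1\otimes\cdots\mapsto 0$ si tous les $b_i$ sont des unit\'es et $\bar w\otimes\pi\otimes\bar b_2\otimes\cdots\mapsto\bar w\otimes\bar b_2\otimes\cdots$) qui d\'efinissent le r\'esidu mod\'er\'e de Milnor sous l'isomorphisme de Bloch--Kato--Gabber $\nu_n(q)\cong K_q/p^n$ ; le carr\'e commute donc sur les symboles et l'on conclut de nouveau par $H^1(\Gamma,-)$, l'image du r\'esidu tombant bien dans le sous-groupe $\Gamma$-invariant annonc\'e puisque le r\'esidu d'une classe tu\'ee par extension des scalaires \`a $\Lbar(\cdot)$ (resp. \`a $\Fbar_s(\cdot)$) reste tu\'ee par celle-ci.

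Le point central du travail sera les deux carr\'es de caract\'eristique mixte \eqref{diag:sit2} et \eqref{diag:sit3}, o\`u le corps source est de caract\'eristique~$0$ et le corps r\'esiduel de caract\'eristique~$p$. L\`a, $\partial_y$ (resp. $\partial^x_{y_0}$, apr\`es passage \`a la normalis\'ee et corestriction) est --- par d\'efinition de $\Hcal^\ast_{p^n,L}$ et le Th\'eor\`eme~\ref{thm:katoizh} --- la seconde projection du scindage de Kato $H^{q+1}_{p^n,\text{nr}}\cong i^\ast H^{q+1}_{p^n}\oplus\psi H^{q}_{p^n}$, avec $i^\ast$ et $\psi$ les applications explicites de la Remarque~\ref{rem:ressauv} (cas d'in\'egale caract\'eristique). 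Je v\'erifierais alors (i) que, sous le symbole galoisien, une classe $w\otimes b_1\otimes\cdots\otimes b_q$ avec $b_i\in\Ocal_v^\times$ a r\'esidu nul et composante $\psi$ \'egale \`a $i(w)\cup h^q_{p^n,F}(b_1,\dots,b_q)$, tandis que $i(w)\cup h^q_{p^n,F}(\pi,b_2,\dots,b_q)$ a pour r\'esidu $w\otimes\bar b_2\otimes\cdots\otimes\bar b_q$, ce qui correspond exactement au r\'esidu mod\'er\'e $\partial_{K,y}\colon K_q(F_{\text{nr}}(X))/p^n\to K_{q-1}(\Fbar_s(y))/p^n$ via les identifications de Bloch--Kato et de Bloch--Kato--Gabber ; et (ii) que l'image de $\partial_y$ est bien contenue dans $H^1(\Gamma,K_{q-1}(\Fbar_s(y))/p^n)$, ce que je d\'eduirais de la commutativit\'e du diagramme \eqref{eq:diabrres} en poids~$2$, propag\'ee \`a tous les poids par multiplication avec la $K$-th\'eorie, et de la $\Gamma$-\'equivariance de $i$. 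Ces deux points \'etablis, \eqref{diag:sit2} et \eqref{diag:sit3} commutent sur les symboles, donc partout. J'anticipe que le point~(i) --- la r\'econciliation minutieuse de la description cocyclique de $i^\ast\oplus\psi$ chez Kato avec le symbole mod\'er\'e de la $K$-th\'eorie de Milnor --- sera le seul endroit r\'eellement d\'elicat ; tout le reste est de la comptabilit\'e de symboles et de la fonctorialit\'e des suites spectrales.
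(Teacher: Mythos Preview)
Your plan is correct and follows essentially the same route as the paper: a case-by-case verification on symboles, using the compatibility of the Bloch--Kato (resp.\ Bloch--Kato--Gabber) symbol with the tame residue for \eqref{diag:sit1} (resp.\ \eqref{diag:sit4}), and Kato's explicit splitting $i^\ast\oplus\psi$ together with an explicit cocycle computation for the mixed-characteristic squares \eqref{diag:sit2}, \eqref{diag:sit3}. One small simplification: your point~(ii) for the mixed case follows directly from~(i) --- once the explicit cocycle $((\sigma(a)-a)\{\bar b_2,\dots,\bar b_q\})_\sigma$ is written down it is visibly in $H^1(\Gamma,K_{q-1}(\Fbar_s(y))/p^n)$ because $\partial_{K,y}$ already lands there --- so the detour through \eqref{eq:diabrres} is unnecessary.
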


\begin{proof}
On a quatre situations; traitons-les cas par cas.
\begin{itemize}
 \item \textit{Le diagramme \eqref{diag:sit1} est commutatif pour $x\in X^{(1)}$:}\\
L'isomorphisme $K_q(F_{nr}(X))/p^n \cong H^q(F_{\text{nr}}(X),\mu_{p^n}^{\otimes q})$
de Bloch-Kato est d\'efini par le symbole galoisien.  Cet isomorphisme commute avec le r\'esidu usuel sur\break $H^q(F_{\text{nr}}(X),\mu_{p^n}^{\otimes q})$
(avec une section donn\'ee par le cup-produit par la classe d'une uniformisante $\pi_x$ pour la valuation associ\'ee \`a $x$) \cite[Prop. 7.5.1]{gilleszam}.  On en d\'eduit 
le r\'esultat, puisque l'isomorphisme \eqref{eq:kernr} est l'inflation, 
et puisque $\partial_{x}$ a aussi une section donn\'ee par le cup-produit par la classe de $\pi_x$.

\item \textit{Le diagramme \eqref{diag:sit2} est commutatif pour $y\in Y^{(0)}$:}\\
Il faut donc aussi v\'erifier que $\im (\partial_{y})$ est contenu dans $H^1\left(\Gamma,K_{q-1}(\Fbar_{s}(y))/p^n\right)$.
Parce que le r\'esidu $\partial_{y}$ est d\'efini par une section, on peut prendre 
$w\otimes \bar{x}_2 \otimes \ldots \otimes \bar{x}_q\in H^q_{p^n,L}(\Fbar(y))$ avec $w\in W_n(\Fbar(y))$ et
$x_2,\ldots,x_q\in \Ocal_y^\times$ ($\Ocal_y$ \'etant l'anneau de valuation associ\'e \`a la valuation induite
par $y$).  Si $\pi_y$ est une uniformisante de $F(X)$ pour
la valuation associ\'ee \`a $y$, il est le r\'esidu de 
\[ i(w)\cup h^q_{p^n,F(X)}(\{\pi_y,x_2,\ldots x_q\})\in H^{q+1}_{p^n,L}\left(F(X)\right).\]  
Il correspond donc \`a 
\[\bigl((\sigma(a)-a)\{\pi_y,x_2,\ldots,x_q\}\bigr)_\sigma \in H^1\left(\Gamma,K_q(F_{\text{nr}}(X))/p^n\right),\] 
o\`u $a^{(p)} -a = w$ avec $a\in W_n(\Fbar(y))$ et o\`u on consid\`ere $(\sigma(a)-a)$ comme \'element de $\Zb/p^n\Zb$.  Par ailleurs, $w\otimes \bar{x}_2 \otimes \ldots \otimes \bar{x}_q$ correspond \`a
\[ \bigl((\sigma(a)-a)\{\bar{x}_2,\ldots ,\bar{x}_q\}\bigr)_\sigma  \in H^1(\Gamma,K_q(\Fbar(y)_s)/p^n).\]  La commutativit\'e
suit donc, et il est aussi clair que $\left((\sigma(a)-a)\{\bar{x}_2,\ldots ,\bar{x}_q\}\right)_\sigma$ est en effet
un \'el\'ement de $H^1\left(\Fbar(y),K_q\left(\Fbar_s(y)\right)/p^n\right)$, parce que $\partial_{K,y}$ tombe dedans.

 \item \textit{Le diagramme \eqref{diag:sit3} est commutatif pour $x\in X^{(1)}$:}\\
On sait le v\'erifier de fa\c{c}on analogue au cas pr\'ecedent. 

\item \textit{Le  diagramme \eqref{diag:sit4} est commutatif pour $y\in Y^{(0)}$:}\\
Les isomorphismes \[ \nu_n(q-1)_{\Fbar(y)_s}\cong K_{q-1}(\Fbar(y)_s)/p^n,  \qquad  \nu_n(q-2)_{\Fbar(y_0)_s}\cong K_{q-2}(\Fbar(y_0)_s)/p^n,\] et
le r\'esidu $K_{q-1}(\Fbar(y)_s)\to K_{q-2}(\Fbar(y_0)_s)$ induisent un r\'esidu,
\begin{eqnarray*}
 \nu_n(q-1)_{\Fbar(y)_s} &\to& \nu_n(q-2)_{\Fbar(y_0)_s}, \quad \text{ defini par } \\
 a \otimes \pi_0 \otimes x_2 \otimes \ldots \otimes x_{q-1} & \mapsto  &\bar{a} \otimes \bar{x}_2 \ldots \otimes \bar{x}_{q-1}.
\end{eqnarray*}
 Ici, $a\in W_n(\Ocal_v$) et $x_i\in \Ocal_{v}^\times$, o\`u $\Ocal_{v}$ est l'anneau de valuation associ\'e \`a la valuation $v$ induite
par $y_0$ avec  $\pi_{0}$ une uniformisante.
Par la d\'efinition du r\'esidu $\partial_{y_0}^y$ (voir les Remarques \ref{rem:ressauv} et \ref{rem:ressauvnoncom}), il est clair que les r\'esidus
sont compatibles.
\end{itemize}
\end{proof}

\fontsize{10}{12} \selectfont
\bibliographystyle{alphanum-fr}
\addcontentsline{toc}{section}{R\'ef\'erences} 
\bibliography{bib-ska1}

\end{document}